\g@addto@macro{\endabstract}{\@setabstract}
\newcommand{\authorfootnotes}{\renewcommand\thefootnote{\@fnsymbol\c@footnote}}%
\def\index{{\mathbb I}}
\def\virt{^{\vir}}
\numberwithin{equation}{section}
\def\gm{\CC\sta}
\def\sO{{\mathscr O}} \def\sC{{\mathscr C}}  \def\sM{{\mathscr M}}
\def\sN{{\mathscr N}}
\def\sL{{\mathscr L}}
\def\sO{\mathscr{O}}
\def\sE{\mathscr{E}}
\def\sU{\mathscr{U}}
\newcommand{\CC}{\mathbb{C}}
\newcommand{\EE}{\mathbb{E}}
\newcommand{\PP}{\mathbb{P}}
\newcommand{\QQ}{\mathbb{Q}}
\newcommand{\ZZ}{\mathbb{Z}}
\newcommand{\bL}{\mathbf{L}}
\def\redd{{\mathrm{red}}}
\def\Gm{{G}}
\newcommand{\vir}{ {\mathrm{vir}} }
\newcommand{\ev}{ \mathrm{ev} }
\newcommand{\vGa}{\Ga}
\def\Wfix{\cW_\Ga} 
\def\fixW{\cW^\fix}
\newcommand{\cal}{\mathcal}
\def\cB{{\cal B}}
\def\cC{{\cal C}}
\def\cD{{\cal D}}
\def\cL{{\cal L}}
\def\cM{{\cal M}}
\def\cN{{\cal N}}
\def\cO{{\cal O}}
\def\cP{{\cal P}}
\def\cV{{\cal V}}
\def\cW{{\cal W}}
\def\cZ{{\cal Z}}
\def\dual{^{\vee}}
\def\sta{^\ast}
\def\st{^{\mathrm{st}}}
\def\virt{^{\mathrm{vir}}}
\def\sta{^{\ast}}
\def\sta{^*}
\def\lbe{_{\beta}}
\def\lra{\longrightarrow}
\def\lsta{_{\ast}}
 \def\expdim{\mathrm{exp.}\dim}
\newcommand{\Si}{\Sigma}
\newcommand{\Ga}{\Theta}
\def\begeq{\begin{equation}}
\def\endeq{\end{equation}}
\def\and{\quad{\rm and}\quad}
\def\bl{\bigl(}
\def\br{\bigr)}
\def\defeq{:=}
\def\sub{\subset}
\def\Ao{{\mathbb A}^{\!1}}
\def\Po{{\mathbb P^1}}
\def\and{\quad\text{and}\quad}
\def\lalp{_\alpha}
\def\reg{{\mathrm{reg}}}
 \DeclareMathOperator{\Aut}{Aut}
\DeclareMathOperator{\spec}{Spec}
\def\lggd{_{g,\gamma,\bd}}
\def\cWgg{\cW\lggd}
\newcommand{\pre}{ {\mathrm{pre}} }
\newtheorem{prop}{Proposition}[section]
\newtheorem{proposition}[prop]{Proposition}
\newtheorem{theo}[prop]{Theorem}
\newtheorem{lemm}[prop]{Lemma}
\newtheorem{rema}[prop]{Remark}
\newtheorem{exam}[prop]{Example}
\newtheorem{defi}[prop]{Definition}
\newtheorem{definition}[prop]{Definition}
\newtheorem{cons}[prop]{Construction}
\newtheorem{defi-prop}[prop]{Definition-Proposition}
\newtheorem{defi-theo}[prop]{Definition-Theorem}
\def\bone{{\mathbf 1}}
\def\Ob{\cO b}
\def\bul{^\bullet}
\def\ev{\text{ev}}
\def\sO{{\mathscr O}}
\def\sD{{\mathscr D}}
\def\beq{\begin{equation}}
\def\eeq{\end{equation}}
\def\vsp{\vskip5pt}
\def\Pf{{\PP^4}}
\def\bee{\begin{equation}}
\def\eeq{\end{equation}}
\def\sC{{\mathscr C}}
\def\bd{{\mathbf d}}
\let\eps=\epsilon
\def\ti{\tilde}
\def\barM{{\overline{M}}}
\def\fix{G} 
\def\AA{\mathbb A}
 \def\gnd{_{g,n,d}} \def\gnD{_{g,n,\bd}}
 \def\utw{^{{\mathrm tw}}}
\def\Gm{G}
\def\bmu{{\boldsymbol \mu}}
\def\cWggm{\cW\lggd^{-}}
\def\nggd{_{g,\gamma,\bd}} \def\nggdm{^{-}_{g,\gamma,\bd}}
\def\ga{\Theta} \def\ufl{^{\mathrm{fl}}}
 \def\pt{\mathrm{pt}}
\def\lred{_{\mathrm{red}}}
\def\bmu{{\boldsymbol \mu}}
\def\umv{^{\mathrm{mv}}}
\def\ufl{^{\mathrm{fl}}}
\def\ualp{^\alpha}  \def\ualpbe{^{\alpha\beta}}
\def\ufix{^G} 
\def\umv{^{\mathrm{mv}}}
 \def\fl{{\mathrm{fl}}}
\def\lrga{_{(\ga)}}
\def\Up{\Upsilon}
\def\fa{\mathfrak a}
\def\ureg{^{\mathrm{reg}}}
\def\lgnd{_{g,n,\bd}}
\def\Cont{\mathrm{Cont}}
\begin{document}

\title[N-Mixed-Spin-P fields]{The theory of N-Mixed-Spin-P fields}

\author[Huai-Liang Chang]{Huai-Liang Chang}
\address{Department of Mathematics, Hong Kong University of Science and Technology, Hong Kong} \email{mahlchang@ust.hk}
\thanks{${}^1$Partially supported by   Hong Kong GRF16301515 and  GRF16301717.}

\author[Shuai Guo]{Shuai Guo$^2$}
\address{School of Mathematical Sciences and Beijing International Center for Mathematical Research, Peking University, China
}
\email{guoshuai@math.pku.edu.cn}
\thanks{${}^2$Partially supported by NSFC grants 11431001 and 11890661.}

\author[Jun Li]{Jun Li$^3$}
\address{
Shanghai Center for Mathematical Sciences, Fudan University, China} \email{lijun2210@fudan.edu.cn}
\thanks{${}^3$Partially supported by  NSF grant DMS-1564500 and DMS-1601211.}

\author[Wei-Ping Li]{Wei-Ping Li$^4$}
\address{Department of Mathematics, Hong Kong University of Science and Technology, Hong Kong} \email{mawpli@ust.hk}
\thanks{${}^4$Partially supported by by Hong Kong   GRF16301515 and GRF 16303518.}

\maketitle
%

\begin{abstract}  This is the first part of the project toward proving the BCOV's Feynman graph sum formula of all genera Gromov-Witten invariants
of quintic Calabi-Yau threefolds. In this paper, we introduce the notion of N-Mixed-Spin-P fields, construct their moduli spaces, their virtual cycles, their virtual localization formulas, and a vanishing result associated with irregular graphs.
\end{abstract}


\setcounter{tocdepth}{1}
\tableofcontents

\section{Introduction}
This is the first of a three-paper series. In this paper, we introduce N-Mixed-Spin-P fields (NMSP fields),
construct their moduli spaces,
construct their potential functions, and prove that these potential functions admit a decomposition separating them
into a part that involves the Gromov-Witten (GW) invariants of the quintic Calabi-Yau (CY) threefolds,
and a part that
involves the Fan-Jarvis-Ruan-Witten (FJRW) invariants of the Fermat quintic polynomial.

In this three-paper series, we will prove several structural  results of the GW potential function $F_g$.
In \cite{NMSP2}, we will prove Yamaguchi-Yau's finite generation conjecture for $F_g$, which implies  the convergence of $F_g$. In \cite{NMSP3},  we will prove the Bershadsky-Cecotti-Ooguri-Vafa (BCOV) Feynman graph sum formula. One of its consequences is the Yamaguchi-Yau Functional equation \cite[Sect.~3.3]{YY} (for the proof see \cite[Sect.~7]{NMSP3}).
Our method is expected to apply to any complete intersection Calabi-Yau threefolds in products of weighted projective spaces.

\smallskip
The notion of MSP field was introduced in \cite{CLLL,CLLL2} as a mathematical theory to realize the phase
transition for GW invariants of quintic CY threefolds envisioned by Witten \cite{GLSM}.
The relations derived from the vanishings via their virtual cycles  
have produced, or reproduced, results on low genus GW invariants of the quintic CY threefolds and the
low genus FJRW invariants of the Fermat quintic polynomials (cf. \cite{CGLZ, GR}).

In the current project, we modified MSP fields: we duplicate one of the fields by $N$
copies, and call the resulting field an NMSP field.
 Duplicating one field by $N$ copies provides a $G=(\CC\sta)^N$-equivariant theory,
 which simplifies significantly the structure of the generating series in the package: For one, when $N$ is large,
the twisted class that appears in the localization
formula at the quintic fixed loci becomes the original quintic virtual class; secondly, it greatly simplifies
the chain and tail contributions in the localization formula (c.f. \cite{NMSP2}), and realizes BCOV Feynman propagators as NMSP two point functions (c.f. \cite{NMSP3}).
All of these play a significant role in allowing us to prove the BCOV Feynman rules for the GW potentials of quintic CY threefolds. 

\smallskip

We now briefly outline the structure of the paper. In this paper, we focus on the Fermat quintic $x_1^5+\cdots+x_5^5$.
An NMSP field of numerical date $(g,\gamma,\bd)$ of the Fermat quintic polynomial is 
\beq\label{xi0}
\xi=(\sC,\Si^\sC,\sL,\sN, \varphi,\rho,\mu,\nu);\ \   \varphi=(\varphi_i)_{i=1}^5,\ \mu=(\mu_j)_{j=1}^N,
\eeq
where $\Si^\sC\sub \sC$ is a genus $g$ $\ell$-pointed twisted curve;  $\sL$ and $\sN$ are invertible sheaves of
$\sO_\sC$-modules; the  monodromy of $\sL$  along the $i$-th marking $\Sigma_i\sub\Sigma^\sC$ is given by $\gamma_i$;
and $\varphi$, $\rho$, $\mu$, and $\nu$ are fields
$$\varphi_i\in H^0(\sL),\ \rho\in H^0(\sL^{-5}\otimes\omega_\sC(\Sigma^\sC)),\ \mu_j\in H^0\big(\sL\otimes\sN),\
\nu\in H^0(\sN),
$$
satisfying certain properties. It is called an NMSP field because $\mu$ consists of $N$ sections
in the same space.
When $N=1$ it is an MSP field (cf. Definition \ref{def-curve}).
\smallskip

We fix $N\ge 1$, and let $\cW\lggd$ be the moduli of stable NMSP fields of type $(g,\gamma,\bd)$,
where $g$ is the genus of the domain curves, $\gamma=(\gamma_i)_{i=1}^\ell$ is the monodromy data,
and $\bd=(d_0,d_\infty)$
is the degree of $\sL\otimes\sN$ and $\sN$. The moduli $\cW\lggd$ is a ${G}$-stack,
via the standard action of $G$ on the $N$-components of $\mu$.

\begin{theo}\label{thm1}
The moduli space $\cW\nggd$ is a separated DM stack, locally of finite type.
It has a cosection localized ${G}$-equivariant virtual cycle $[\cW\nggd]\virt$,
lying in a proper $G$-invariant substack $\cW\nggdm\sub \cW\nggd$:
$$[\cW\nggd]\virt\in A^{\Gm}\lsta (\cW\nggdm).
$$
\end{theo}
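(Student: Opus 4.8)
The plan is to mimic, $G$-equivariantly, the construction of the moduli of MSP fields and their cosection-localized virtual cycles from \cite{CLLL,CLLL2}; the only genuinely new feature is that the single $\mu$-field is now an $N$-tuple $\mu=(\mu_j)_{j=1}^N$ carrying the diagonal $G=(\CC\sta)^N$-action, and this does not affect the structural arguments below. First I would forget the fields and keep only the $\ell$-pointed genus $g$ twisted curve $(\sC,\Si^\sC)$ together with the invertible sheaves $\sL,\sN$ of monodromy $\gamma$ along the markings and degrees $\bd=(d_0,d_\infty)$; these objects form a smooth Artin stack $\fM$, locally of finite type. Over $\fM$ the fields $\varphi_i,\rho,\mu_j,\nu$ are sections of the tautological sheaves $\sL$, $\sL^{-5}\otimes\omega_\sC(\Si^\sC)$, $\sL\otimes\sN$ and $\sN$, so the moduli of all (not necessarily stable) NMSP fields is an algebraic stack, locally of finite type over $\fM$ (the abelian cone attached to $\Rpi_*$ of the direct sum of these four sheaves). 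Imposing the stability condition of Definition \ref{def-curve} is an open condition and forces every object to have a finite reduced automorphism group, so $\cW\nggd$ is a DM stack, locally of finite type. Separatedness is the valuative criterion: over a punctured disc the twisted curve extends uniquely after permitted base change, the invertible sheaves extend, the fields extend by properness of $\sC$ over the base, and the stability condition removes the remaining ambiguity, yielding uniqueness of limits.

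Next I would equip $\cW\nggd$ with an obstruction theory and a cosection. Relative to $\fM$, the natural choice is the perfect obstruction theory
$$\bigl(\Rpi_*(\sL^{\oplus 5}\oplus(\sL^{-5}\otimes\omega_\sC(\Si^\sC))\oplus(\sL\otimes\sN)^{\oplus N}\oplus\sN)\bigr)^{\vee},$$
of amplitude $[0,1]$; since $\fM$ is smooth, composing with $\LL_\fM$ yields an absolute perfect obstruction theory on $\cW\nggd$, and because $G$ acts linearly on the summand $(\sL\otimes\sN)^{\oplus N}$ and trivially on the remaining summands, this theory is $G$-equivariant. From the Fermat superpotential $W=\rho(\varphi_1^5+\cdots+\varphi_5^5)$ I would construct, exactly as in \cite{CLLL}, a cosection $\sigma\colon\Obs_{\cW\nggd}\to\sO_{\cW\nggd}$ by differentiating $W$; since $\mu$ and $\nu$ do not enter $W$, the obstruction directions coming from $\mu$ and $\nu$ lie in $\ker\sigma$, so $\sigma$ is $G$-invariant. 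Kiem--Li cosection localization, applied as in \cite{CLLL2}, then produces a $G$-equivariant localized virtual cycle $[\cW\nggd]\virt$ supported on the closed substack $D_\sigma\sub\cW\nggd$ on which $\sigma$ fails to be surjective.

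Finally I would show that $D_\sigma$ lies inside the proper $G$-invariant substack $\cW\nggdm$ of the statement, so that $[\cW\nggd]\virt\in A^{\Gm}\lsta(\cW\nggdm)$. The $G$-invariance of $\cW\nggdm$ is immediate from the construction of $\sigma$. Properness is the crux. At a point where $\sigma$ is not surjective, differentiating $W$ forces, on each irreducible component of $\sC$, either $\rho\equiv0$ with $(\varphi_1,\ldots,\varphi_5)$ valued in the Fermat quintic $x_1^5+\cdots+x_5^5=0$, or $\varphi\equiv0$; combining this with the stability constraints on $\sL,\sN$ and with the auxiliary fields $\mu,\nu$, one runs the valuative criterion to verify that a one-parameter family inside $D_\sigma$ admits a unique limit inside $D_\sigma$, and in particular that no component of $\varphi$ or of $\rho$ can run off to infinity. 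This combinatorial analysis---of which components of the domain carry which vanishing, and why the resulting substack is proper---is precisely the MSP properness argument of \cite{CLLL2}, and carrying it out $G$-equivariantly in the NMSP setting is the step I expect to be the main obstacle; once it is in place, the remaining assertions of the theorem follow formally.
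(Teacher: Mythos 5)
Your overall route---realizing $\cW\nggd$ as an open substack of a cone of fields over the smooth Artin stack of twisted curves with two line bundles, the relative perfect obstruction theory, the cosection obtained by differentiating $W=\rho(\varphi_1^5+\cdots+\varphi_5^5)$ (so that the $\mu,\nu$-directions lie in its kernel and $G$-equivariance is automatic), Kiem--Li cosection localization, and a CLLL-style valuative analysis of the degeneracy locus---is the paper's route. But there is a genuine gap at the final step: you treat properness of $\cW\nggdm$ as a purely valuative-criterion matter (existence and uniqueness of limits of one-parameter families inside the degeneracy locus). That is not sufficient, because $\cW\nggd$ is only \emph{locally} of finite type: with $(g,\gamma,\bd)$ fixed, the underlying twisted curve may have arbitrarily many unstable components and the distribution of the degrees of $\sL$ and $\sN$ over them is a priori unbounded, so quasi-compactness of the degeneracy locus is a substantive assertion, and the valuative criterion for properness only applies once finite type is known.

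The paper devotes a separate argument to exactly this point (Proposition \ref{finite}, proved in \S\ref{bound}). One first shows that $(\cW\nggdm)^{G}$ is of finite type by bounding the possible dual graphs of its points: auxiliary legs are added to the dual graph ($3\deg\sL\otimes\sN|_{\sC_v}$ per vertex, and $2\delta(v)$ legs at the $V_{1\infty}$-vertices), and the nowhere vanishing of $\rho$ on $\sC_\infty$ together with $\deg\sL\otimes\sN\ge 0$ componentwise bounds the total number of added legs by $3d_0$ and $10d_\infty+4g+2\ell$, forcing the stabilized graphs into a finite set. Then, using precisely the extension result you invoke (Proposition \ref{proper-proof}), every point of $\cW\nggdm$ flows under $t\to 0$ into the $G$-fixed locus, so $\cW\nggdm$ equals the finite-type attracting set of $(\cW\nggdm)^{G}$. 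Without some such boundedness input, your properness argument does not close. Two smaller points: the summands of the relative obstruction theory should be the twisted sheaves $\cL(-\Si^\cC_{(1,\varphi)})^{\oplus 5}$ and $\cP(-\Si^\cC_{(1,\rho)})$ rather than $\sL^{\oplus 5}$ and $\sL^{-5}\otimes\omega^{\log}_{\sC/S}$, since $(1,\varphi)$- and $(1,\rho)$-markings occur in the narrow case and these twists are used in making the cosection land in $H^1(\omega_{\sC/S})$; and in the separatedness step the CLLL argument needs the modification recorded in Lemma \ref{valuative2}, because for $N>1$ the nonvanishing of $\mu$ on a component only gives a map to $\PP^{N-1}$, not triviality of $\sL\otimes\sN$ there.
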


We define the NMSP potential function. 
 In this article, we will confine ourselves to the case where the markings consist of $n$ scheme
markings decorated by $(1,\rho)$, meaning that the  field $\rho$ vanishes along the
markings. Namely we shall always choose $\gamma=(1,\rho)^n$.  We sometimes denote by $\cW\lgnd$ the moduli of stable NMSP fields $\cW\lggd$
of numerical data $(g,(1,\rho)^n,\bd)$. 

Then for any $\xi\in \cW\lgnd$, as in \eqref{xi0}, and the $i$-th marking $\Sigma_i\sub \Sigma^\sC$,
the definition of NMSP fields forces $\nu|_{\Sigma_i} $ nowhere vanishing, and
$$0\ne \bl\varphi_1|_{\Sigma_i},\cdots,\varphi_5|_{\Sigma_i},(\mu_1/\nu)|_{\Sigma_i},\cdots,(\mu_N/\nu)|_{\Sigma_i}\br
\in H^0(\sL|_{\Sigma_i})^{\oplus (5+N)}.
$$
Thus it defines a $G$-equivariant evaluation morphism
\beq\label{evi}
\ev_i: \cW\lgnd\lra \PP^{4+N},
\eeq
where $G$ acts on $\PP^{4+N}$ by scaling the last $N$-homogeneous coordinates of $\PP^{4+N}$.
\smallskip

Let $t_1,\cdots,t_N$ be the standard generators of $A_G^2(\pt)$. For $1\le i\le n$, we let $\tau_i\in H_G\sta(\PP^{4+N})$,
 and let $\bar\psi_i\in H^2_G(\cW\nggd)$ be the ancestor psi-class (i.e. the pullback from $\barM_{g,n}$). We let
$\zeta_N=\exp(\frac{2\pi \sqrt{-1}}{N})$.
We define
\beq\label{nmsp0}
\bigl<\prod_{i=1}^n \tau_i\bar\psi_i^{k_i}\bigr>^M_{g,n,d_\infty}=\sum_{d\ge 0}(-1)^{d+1-g}q^d\cdot
\int_{[\cW_{g,n,(d,d_\infty)}]\virt}\prod_{i=1}^n \ev_i\sta(\tau_i)\bar\psi_i^{k_i}.
\eeq
\medskip
\noindent
{\bf Convention}. {\sl
After equivariant integration, we will substitute $t\lalp=
-\zeta_N^\alpha t$ for a formal variable $t$, where $\alpha$ is always understood to be an integer in $[1,N]$.
}


\begin{theo}[Polynomiality of NMSP potential]\label{thm2}
We suppose all $\tau_i$ have pure degree\footnote{Our convention is that elements in $H^2$ has degree 1.};
and  let $\eps= \frac{1}{N} \big(\sum^n_{i=1}(\deg \tau_i+k_i-1) -d_\infty \big)$. Then 
\beq\label{111}
t^{-(g-1+\eps)N }\cdot \bigl<\prod_{i=1}^n \tau_i\bar\psi_i^{k_i}\bigr>^M_{g,n,d_\infty}
\eeq
is a rational polynomial in $q/t^N$ of degree no more than $g-1+\eps$.
\end{theo}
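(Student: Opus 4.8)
The plan is to derive the statement from a virtual dimension count combined with the graded structure of $G$-equivariant Chow groups of a point. By definition \eqref{nmsp0},
$$\bigl<\prod_{i=1}^{n}\tau_{i}\bar\psi_{i}^{k_{i}}\bigr>^{M}_{g,n,d_\infty}=\sum_{d\ge 0}(-1)^{d+1-g}q^{d}\,I_{d},\qquad I_{d}:=\int_{[\cW_{g,n,(d,d_\infty)}]\virt}\ \prod_{i=1}^{n}\ev_{i}\sta(\tau_{i})\,\bar\psi_{i}^{k_{i}}.$$
By Theorem~\ref{thm1} the cosection-localized virtual cycle $[\cW_{g,n,(d,d_\infty)}]\virt$ has virtual dimension $v_{d}:=\vdim\cW_{g,n,(d,d_\infty)}$ and is supported on the proper $G$-invariant substack $\cW^{-}_{g,n,(d,d_\infty)}$, so the $G$-equivariant pushforward to a point is well-defined; as the integrand has pure degree $m:=\sum_{i=1}^{n}(\deg\tau_{i}+k_{i})$ we obtain
$$I_{d}\in A_{G}^{\,m-v_{d}}(\pt)=\QQ[t_{1},\dots,t_{N}]_{m-v_{d}},$$
the homogeneous piece of degree $m-v_{d}$, which vanishes once $m-v_{d}<0$.

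The core of the proof is the computation of $v_{d}$. The moduli $\cW_{g,n,(d,d_\infty)}$ lies over the stack parametrizing the $n$-pointed genus-$g$ twisted curve $\sC$ together with $(\sL,\sN)$, a piece of dimension independent of $(d,d_\infty)$, while the relative perfect obstruction theory is that of the fields $\varphi_{i}\in H^{0}(\sL)$, $\rho\in H^{0}(\sL^{-5}\otimes\omega_{\sC}(\Sigma^{\sC}))$, $\mu_{j}\in H^{0}(\sL\otimes\sN)$, $\nu\in H^{0}(\sN)$. By Riemann--Roch on $\sC$, the only $d$-dependence in $5\chi(\sL)+\chi(\sL^{-5}\otimes\omega_{\sC}(\Sigma^{\sC}))+N\chi(\sL\otimes\sN)+\chi(\sN)$ comes from the terms $5\chi(\sL)$ and $\chi(\sL^{-5}\otimes\omega_{\sC}(\Sigma^{\sC}))$, which cancel (the $\sL^{-5}$ of the $\rho$-field exactly balances the five $\varphi$-fields), and from $N\chi(\sL\otimes\sN)$, which contributes $Nd$. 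Accounting for the twisted markings, the stacky $\mathrm{Pic}$-factors, the gauge automorphisms, and the dimension-preserving effect of the cosection, one gets
$$v_{d}=Nd+N(1-g)+n+d_\infty,\qquad\text{hence}\qquad m-v_{d}=N\bigl(g-1+\eps-d\bigr).$$
In particular $I_{d}=0$ for $d>g-1+\eps$, so the sum defining the NMSP potential is finite.

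It remains to apply the substitution from the Convention preceding the theorem. For $d\le g-1+\eps$ the class $I_{d}$ is homogeneous of degree $m-v_{d}=N(g-1+\eps-d)=(g-1)N+(m-n-d_\infty)-Nd$ in $t_{1},\dots,t_{N}$, so under $t\lalp=-\zeta_{N}^{\alpha}t$ it becomes $c_{d}\,t^{\,(g-1+\eps)N}(t^{N})^{-d}$ for a scalar $c_{d}$. Summing over $d$ and dividing by $t^{(g-1+\eps)N}$ yields
$$t^{-(g-1+\eps)N}\bigl<\prod_{i}\tau_{i}\bar\psi_{i}^{k_{i}}\bigr>^{M}_{g,n,d_\infty}=\sum_{0\le d\le g-1+\eps}(-1)^{d+1-g}c_{d}\,(q/t^{N})^{d},$$
a polynomial in $q/t^{N}$ of degree at most $g-1+\eps$. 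The coefficients are rational because the construction---moduli, obstruction theory, cosection, the $\ev_{i}$ and $\bar\psi_{i}$---is equivariant for the $S_{N}$ permuting $\mu_{1},\dots,\mu_{N}$, so each $I_{d}$ is a symmetric polynomial in $t_{1},\dots,t_{N}$; since $\{-\zeta_{N}^{\alpha}\}_{\alpha=1}^{N}$ is precisely the set of roots of $x^{N}-(-1)^{N}\in\QQ[x]$, a symmetric function of it lies in $\QQ$.

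The only genuinely delicate step is the virtual dimension computation: pinning down the additive constant $N(1-g)+n+d_\infty$ requires care with the conventions for twisted markings and for the $(1,\rho)$-decorations (where $\rho$ vanishes along the markings), with the stacky line-bundle and gauge contributions, and with the fact that one integrates against the cosection-localized class on the proper substack $\cW^{-}_{g,n,(d,d_\infty)}$ of Theorem~\ref{thm1}, not against an ordinary virtual class on the non-proper $\cW_{g,n,(d,d_\infty)}$. Once that constant is confirmed, everything else is formal bookkeeping with the grading on $A_{G}^{\ast}(\pt)$ and the substitution.
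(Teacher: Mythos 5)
Your proposal is correct and follows essentially the same route as the paper's own proof: both use that the cosection-localized cycle is supported on the proper $G$-invariant substack $\cW^-_{g,n,(d,d_\infty)}$ so the equivariant pushforward to a point is defined, invoke the virtual dimension formula \eqref{vdim} (which you re-derive by Riemann--Roch but could simply cite from Proposition \ref{dim}, giving $\delta(\bd)=Nd+d_\infty+N(1-g)+n$ for $\gamma=(1,\rho)^n$), and conclude by the degree count $m-\delta(\bd)=N(g-1+\eps-d)$ in $A_G^{\ast}(\pt)$ that the coefficient of $q^d$ vanishes for $d>g-1+\eps$ and becomes a monomial $c_d\,t^{N(g-1+\eps-d)}$ after the substitution $t_\alpha=-\zeta_N^{\alpha}t$. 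One remark: your closing $S_N$-symmetry argument for rationality of the $c_d$ (which goes beyond the paper, whose proof only asserts $c_d\in\QQ(\zeta_N)$) tacitly assumes the insertions $\tau_i$ are invariant under permuting $t_1,\dots,t_N$ (e.g.\ polynomials in the equivariant hyperplane class); for a general $\tau_i\in H_G^{\ast}(\PP^{4+N})$ the integral $I_d$ need not be a symmetric polynomial in the $t_\alpha$, so that hypothesis should be made explicit.
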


Notice that \eqref{111} vanishes unless $\eps\in \mathbb Z$, as we have substituted $t\lalp$ by
$-\zeta_N^\alpha t$. In \cite{NMSP2}, we shall apply the theorem for sufficiently large $N$ so that  the polynomial \eqref{111} has degree $\leq g-1$.  

\medskip
We will use virtual localization (cf. \cite{GP}) to study the structure of this correlator function.
For a narrow $(g,\gamma,\bd)$, we will use flat decorated graphs $\ga\in G\lggd^\fl$ to parameterize
open and closed substacks of the $G$-fixed part
$$ (\cW\lggd)^\fix=\coprod_{\ga\in G\lggd^\fl} \cW_{(\ga)}.
$$
The virtual localization gives 
\beq\label{vir-loc}
[\cW\lggd]\virt= \sum_{\ga \in G\lggd^\fl}  \frac{[\cW_{(\ga)}]\virt}
{e(N\virt_{\ga})} ,
\eeq
where  $[\cW_{(\ga)}]\virt\in A\lsta(\cW_{(\ga)}\cap\cW\lggd^-)$.

We briefly describe these graphs. A graph $\ga\in G\lggd^\fl$ has its vertices possibly lying in three different levels: level $0$, level $1$, and level $\infty$.
In the virtual localization formula, a level 0 vertex will contribute a GW invariant of the
quintic threefold; a level 1 vertex will contribute a Hodge integral on the moduli of pointed curves, and
a level $\infty$ vertex will contribute an FJRW invariant of the Fermat quintic polynomial.

 We find that, among all graphs, a subclass of ``irregular graphs"  contributes zero numerically.
In short, a graph $\Theta$ is irregular if it has an edge incident to two vertices,  one at level $0$ and one at
level $\infty$, or has some special vertices at level $\infty$ (c.f. Definition \ref{de-regu}). We prove the irregular vanishing theorem.

\begin{theo}\label{thm3}
For an irregular $\Theta\in G\lggd\ufl$ not a pure loop, $[\cW_{(\Theta)}]\virt\sim 0$.
\end{theo}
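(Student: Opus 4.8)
The plan is to argue via cosection localization, using the cosection $\sigma$ on the obstruction sheaf of $\cW\nggd$ underlying Theorem \ref{thm1}, determined by the Fermat superpotential $W=\rho\cdot\sum_{i=1}^5\varphi_i^5$ and with degeneracy locus $\cW\nggdm$. By the virtual--localization decomposition \eqref{vir-loc} one has $[\cW_{(\Theta)}]\virt\in A\lsta(\cW_{(\Theta)}\cap\cW\nggdm)$, so it suffices to show that for an irregular $\Theta$ which is not a pure loop
\beq\label{empt}
\cW_{(\Theta)}\cap\cW\nggdm=\eset,
\eeq
that is, that $\sigma$ is nowhere degenerate along $\cW_{(\Theta)}$; then $[\cW_{(\Theta)}]\virt$ lies in $A\lsta(\eset)=0$ and $[\cW_{(\Theta)}]\virt\sim 0$. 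On the deformation direction of the $P$-field, $\sigma$ is, after the Serre-duality identification $H^1(\sC,\omega_\sC)\cong\CC$, the functional $\dot\rho\mapsto(\sum_i\varphi_i^5)\cup\dot\rho$, which is Serre dual to the multiplication $H^0(\sC,\sO_\sC)\to H^0(\sC,\sL^{\otimes5})$ by $\sum_i\varphi_i^5$; hence $\sigma_\xi$ is surjective as soon as $\sum_i\varphi_i^5$ is not identically zero on some irreducible component of the domain curve $\sC$ of $\xi\in\cW_{(\Theta)}$. Similarly, on the deformation direction of $\varphi_i$, $\sigma$ pairs against $5\rho\,\varphi_i^4\in H^0(\sC,\sL^{-1}\otimes\omega_\sC)$, and so is surjective whenever some $\rho\,\varphi_i^4\ne 0$. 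Thus \eqref{empt} will follow once one exhibits, at every $\xi\in\cW_{(\Theta)}$, a component of $\sC$ on which one of these non-vanishings holds.

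I would first treat the edge case. Suppose $\Theta$ has an edge $E$ joining a level-$0$ vertex to a level-$\infty$ vertex. At the level-$\infty$ node of $E$ all the $\varphi_i$ vanish, while (by the structure of edges in a flat graph) $\varphi|_E$ is concentrated in a single coordinate: $\varphi_{i_0}|_E\ne 0$ and $\varphi_j|_E=0$ for $j\ne i_0$; equivalently, the value of $\varphi$ at the level-$0$ node of $E$ is the coordinate point $p_{i_0}\in\PP^4$, which does not lie on the Fermat quintic $\{\sum_i x_i^5=0\}$. In particular $\varphi_{i_0}|_E$ is a nonzero section of $\sL|_E$ with a zero, so $\sL|_E$ has positive degree, $\sL^{-5}\otimes\omega_\sC$ has negative degree on $E$, and $\rho|_E=0$; but $\sum_i\varphi_i^5|_E=(\varphi_{i_0}|_E)^5\ne 0$. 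Hence $\sum_i\varphi_i^5\ne 0$ on $\sC$, $\sigma_\xi$ is surjective for every $\xi\in\cW_{(\Theta)}$, and \eqref{empt} holds.

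For the case of a special level-$\infty$ vertex $v$ (in the sense of Definition \ref{de-regu}), I would run the analogous local analysis at $v$: examine $\sL$, $\sN$, the $P$-field $\rho$, and the gluing and monodromy data at $v$ and along its flags, and show that the property defining a special vertex again forces $\sum_i\varphi_i^5$, or some $\rho\,\varphi_i^4$, to be non-zero on a component meeting $v$, so that $\sigma_\xi$ is surjective there as well (and where the analysis instead produces a degree or dimension obstruction, the corresponding localized term vanishes). In either case \eqref{empt} holds, and hence $[\cW_{(\Theta)}]\virt\sim 0$.

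The step I expect to be the main obstacle is this local analysis at the irregular loci — in particular, extracting from Definition \ref{de-regu} the exact geometric input that prevents the field configuration over $\cW_{(\Theta)}$ from degenerating into $\cW\nggdm$, where $\rho\,\varphi_i\equiv 0$ for all $i$ and $\varphi$ carries every level-$0$ component into the Fermat quintic, and then checking that the non-vanishing so produced genuinely detects the summand of the obstruction sheaf on which $\sigma$ acts. The hypothesis that $\Theta$ is not a pure loop is used precisely to secure the existence of such a detecting component: on a pure loop the whole domain curve may lie in the locus where $\varphi$ and $\rho$ vanish, or where $\varphi$ is constrained to a coordinate line contained in the quintic, so \eqref{empt} can fail and pure loops must be treated separately.
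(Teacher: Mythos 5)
Your strategy---showing $\cW_{(\Theta)}\cap\cW\lggd^{-}=\emptyset$ so that the cosection-localized class is supported on the empty set---cannot work, and this is visible already from the way the theorem is stated. The relation $\sim$ is needed precisely because for some irregular graphs the localized class does \emph{not} vanish on the nose: it is supported on a non-empty piece of the degeneracy locus and only dies after pushforward into a larger proper substack. Concretely, the degeneracy locus over $\cW_{(\Theta)}$ is typically non-empty for irregular $\Theta$. First, $\Theta$ is a \emph{flattened} graph: an edge $e\in E_{0\infty}(\Theta)$ (in particular a string) parametrizes, among other things, configurations $\xi$ whose actual curve carries a chain of two components meeting at a $G$-balanced node, one with $\varphi\equiv 0$ (the $E_{1\infty}$ part) and one with $\rho\equiv 0$ (the $E_{01}$ part); such $\xi$ satisfy the criterion of Lemma \ref{degenerate-locus} and lie in $\cW\lggd^{-}$. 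You have conflated $\Theta$ with $\Theta_\xi$. Second, graphs that are irregular because of a bad level-$\infty$ vertex (monodromy profile neither exceptional nor $(1^{k_1}2^{k_2})$) have $\varphi\equiv 0$ and $\rho$ nowhere zero on the whole level-$\infty$ part, so every such fixed point is automatically in the degeneracy locus; emptiness fails badly there, and the actual mechanism for vanishing is a dimension count, not non-degeneracy of $\sigma$. Third, your local analysis on a $0\infty$ edge is unjustified: the torus $G=(\CC\sta)^N$ acts only on $\mu$, not on the five $\varphi$-coordinates, so $G$-fixedness does \emph{not} force $\varphi|_{\sC_e}$ to be concentrated in one coordinate, nor the value at the level-$0$ node to be a coordinate point off the quintic (indeed the level-$0$ vertices carry full quintic Gromov--Witten moduli, not isolated fixed points).

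The paper's proof is of a completely different nature. One first trims the graph (Lemmas \ref{trim-1}--\ref{trim-7}): remove $E_{01}\cup E_{1\infty}\cup V_1$, edges of marking/leaf-end type, spare legs and neutral vertices, each step identifying $[\cW_\Theta]\virt$ with a pullback of the class of the trimmed graph. For irregular $\Theta$ containing no string, the trimmed graph is compared with an associated MSP graph $\Theta^{\mathrm{end}}$ and one shows $\expdim\cW_\Theta\le\expdim\cW_{\Theta^{\mathrm{end}}}<0$ using the MSP vanishing of \cite{IG}, giving an honest vanishing $[\cW_\Theta]\virt=0$ (Proposition \ref{reduction1}). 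The genuinely hard case is a graph containing a string, where the expected dimension can be non-negative: there one trims the string, constructs $\ti\kappa:\cW_\Theta^{\sim}\to\cW^-_{\Theta'}$, proves $\jmath\lsta[\cW_\Theta]\virt=c\cdot\ti\kappa\sta[\cW_{\Theta'}]\virt$ (Proposition \ref{reduction}), and only then concludes $[\cW_{(\Theta)}]\virt\sim 0$. Your proposal has no mechanism to handle this case, and its central claim (\ref{empt}) is false for it.
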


Here a graph is called a pure loop if it has no legs, no stable vertices, and every vertex has
exactly two edges attached to it. And $\sim$ means that for the purpose of numerical applications, one can almost always treat
any two classes $A\sim A'$ as identical classes. (See Definition \eqref{sim}.)
We let $G\lggd^\reg$ be the set of
regular graphs in $G\gnd^\fl$.

\medskip
For the case when $\gamma$ consists of $n$ $(1,\rho)$-type markings, we will use $\cW\lggd=\cW\gnD$.
For classes $\tau_i\in H\sta_G(\PP^{4+N})$, coupled with \eqref{vir-loc} and Theorem \ref{thm3}, we obtain
\beq\label{localization0}
\pi\lsta \bl [\cW\gnD]\virt\cdot(-)\br=\sum \pi\lsta\Bigl(
\frac{[\cW_{(\Theta)}]\virt}{e_G(N\virt_\Theta)}\cdot(-)\Bigr)
:=\sum \pi\lsta\bl \Cont_\Theta\cdot (-)\br,
\eeq
where the summation is over all $\Theta\in G\gnD^\reg$, the map $\pi:\cW\gnD^-\to\pt$ is the projection, and
$(-)=\ev_1\sta\tau_1\psi_1^{k_1}\cdots\ev_n\sta\tau_n\psi_n^{k_n}$. Namely when we calculate correlators using \eqref{vir-loc}, we only need to sum over regular graphs, due to Theorem \ref{thm3}.
\smallskip

In the end, we will use the knowledge of the localization formula to prove a decomposition formula, relating
$\Cont_\Theta$ with the localization contribution from the graph $\Theta'$ obtained by dislodging an edge from one of its
incidental vertex.


\medskip
The organization of this paper is as follows. In \S2 and \S 3, we will state the basic properties of NMSP
fields, and prove the basic properties of the moduli $\cW\nggd$, including Theorem \ref{thm1}. In \S 4, we
will investigate the structure of the fixed loci $(\cW\nggd)^\fix$. We will prove Theorem \ref{thm3} in \S 5.
In \S 6,  we will provide the details of the virtual localization formula.
We will prove Theorem \ref{thm2} in the end.

\medskip
\noindent
{\bf Acknowledgement}.
The authors would like to take this opportunity to thank Yongbin Ruan for generously sharing his understanding of
the insight from physics on GW invariants of quintic CY threefolds.
The authors also thank Melissa Liu for her help to this project.

\section{NMSP fields}

In this section, we introduce the notion of NMSP fields and prove their basic properties.
The proofs of most of the stated properties can be found in \cite{CLLL}.
We only consider NMSP fields of Fermat type $x_1^5+\cdots+x_5^5$ in this paper.

\smallskip
We let $\bmu_5\le \CC\sta$ be the subgroup of the $5$-th roots of unity. We let
$$\bmu_5^{\text{br}}=\bmu_5\cup \{(1,\rho),(1,\varphi)\},\and 
\bmu_5^{\text{na}}=\bmu_5^{\text{br}}-\{1\}. 
$$
For $\eta\in\bmu_5$, we let $\langle \eta\rangle\le\gm$ be the subgroup generated by $\eta$. For the two exceptional elements $(1,\rho)$ and $(1,\varphi)$, we agree
$\langle (1,\rho)\rangle=\langle (1,\varphi)\rangle=\langle 1\rangle$. 

We call a triple $(g,\gamma,\bd)$, where
$$g\ge 0,\quad \gamma=(\gamma_1,\cdots,\gamma_\ell)\in (\bmu_5^{\text{br}})^{\times\ell},\and \bd=(d_0, d_\infty)\in
\QQ^{\times 2},
$$
a numerical data of NMSP fields.
In case $ \gamma\in (\bmu_5^{\text{na}})^{\times\ell}$,
we say that $(g,\gamma,\bd)$ or $\gamma$ is
{\sl narrow}. Otherwise, when some $\gamma_i=1$
we say $\gamma_i$, $\gamma,$ and $(g,\gamma,\bd)$ are  {\sl  broad}.

Recall the convention on invertible sheaves on a twisted curve.
An indexed  $\bmu_5$-marking of a twisted curve has a local model given by
$$[\text{pt}/\bmu_5]\sub \sU\defeq [\AA^1/\bmu_5]=\left[ \spec\CC[u]\big/ \bmu_5\right]. 
$$
The invertible sheaf of $\sO_{\sU}$-modules $\sO_{\sU}(\frac{m}{5}p)$ has monodromy $\zeta_5^m$,
or index $m\in [0,4]$, where as always $\zeta_5=e^{\frac{2\pi\sqrt{-1}}{5}}$.

For an $S$-family of pointed twisted nodal curves $\Si^\sC\sub \sC$, we let
$\omega^{\log}_{\sC/S}=\omega_{\sC/S}(\Si^\sC)$; for
$\alpha\in \bmu_5^{\text{br}}$, we let $\Si^\sC_\alpha=\coprod_{\gamma_i=\alpha}\Si^\sC_i$.

\begin{definition}\label{def-curve} Let $S$ be a scheme and  $(g,\gamma,\bd)$ be a numerical data.
An $S$-family of NMSP-fields of type $(g,\gamma,\bd)$ is
\beq\label{field}
\xi=(\sC,\Si^\sC,\sL,\sN, \varphi,\rho, \mu,\nu)
\eeq
such that
\begin{enumerate}
\item[(1)] $\cup_{i=1}^\ell\Si_i^\sC = \Si^\sC\subset \sC$ is an $\ell$-pointed, genus $g$, fiberwise connected
$S$-twisted curve so that the $i$-th marking $\Si^\sC_i$ is banded by $\langle\gamma_i\rangle\le \gm$;
\item[(2)] $\sL$ and $\sN$ are representable invertible sheaves on $\sC$; $\sL\otimes \sN$ and $\sN$ have fiberwise degrees $d_0$ and $d_\infty$ respectively; the monodromy of $\sL$ along $\Si^\sC_i$ is
$\gamma_i$ when $\langle \gamma_i\rangle\ne\langle 1\rangle$;
\item[(3)] $\mu=(\mu_1,\cdots,\mu_N)$, $\mu_i\in H^0( \sL\otimes\sN)$; $\nu \in H^0( \sN)$, and $(\mu,\nu)$ is nowhere zero;
\item[(4)]
$\varphi\in H^0(\sL(-\Si^\sC_{(1,\varphi)}))^{\oplus 5}$; and $(\varphi, \mu)$ is nowhere zero;
\item[(5)] $\rho \in H^0(\sL^{- 5}\otimes \omega^{\log}_{\sC/S}(-\Si^\sC_{(1,\rho)}))$;
and $(\rho, \nu)$ is nowhere zero. 
\end{enumerate}
We call $\xi$ narrow (resp. broad) if $\gamma$ is narrow (resp. broad).
\end{definition}

We define an arrow $\xi'=(\sC' ,\Si^{\sC'} , \sL',\cdots)\to \xi$
between two $S$-NMSP-fields to be a 3-tuple $(a,b,c)$ such that
$a:(\Si^{\sC'} \subset \sC')\to  (\Si^\sC \subset \sC)$ is an $S$-isomorphism of twisted curves, $b$ and $c$ are
isomorphisms of $\sL'$ and $\sN'$ with $\sL$ and $\sN$, respectively, which  preserve all fields
(cf. \cite[Definition 2.6]{CLLL}).


We define $\cW\lggd^{\pre}$ to be the category fibered in groupoids over
the category of schemes, such that objects in $\cW\lggd^{\pre}(S)$ are
$S$-families of NMSP-fields of type $(g,\gamma,\bd)$, and morphisms are given as above.

\begin{defi} We call $\xi\in \cW\lggd^{\pre}(\CC)$ {\em stable} if $\Aut(\xi)$ is finite.
We say $\xi\in \cW\lggd^{\pre}(S)$ is stable if $\xi|_s$ is stable for every closed point $s\in S$.
\end{defi}

It is direct to check that being stable is an open condition.
Let $\cW\lggd\subset \cW\lggd^{\pre}$ be the open substack of families of stable objects in $\cW\lggd^{\pre}$.
The group $G=(\gm)^{N}$ acts on $\cW\lggd$ tautologically:
\beq\label{Gm}
t\cdot (\Si^\sC, \sC, \sL, \sN,\varphi,\rho, \mu, \nu)
=  (\Si^\sC, \sC, \sL, \sN,\varphi,\rho,t\cdot\mu,\nu),\quad t\in \Gm,
\eeq
where 
$t\cdot (\mu_1,\cdots,\mu_N)=(t_1\mu_1,\cdots,t_N\mu_N)$.

\begin{theo}
The stack $\cW\lggd$
is a DM ${G}$-stack, locally of finite type.
\end{theo}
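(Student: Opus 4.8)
The plan is to realize $\cW\lggd$ as an open substack of a tower of stacks each of which is known to be algebraic, Deligne--Mumford, or locally of finite type, by forgetting the fields one at a time. First I would start from the Abramovich--Vistoli stack $\fM_{g,\ell}^{\tw}$ of $\ell$-pointed genus $g$ twisted curves, which is a smooth algebraic stack locally of finite type; imposing that the $i$-th gerbe marking be banded by $\langle\gamma_i\rangle$ cuts out an open and closed substack, so the stack of domains in Definition~\ref{def-curve}(1) is algebraic and locally of finite type. Next, adding the pair of representable invertible sheaves $\sL,\sN$ of fixed fiberwise degrees $(d_0,d_\infty)$ (for $\sL\otimes\sN$ and $\sN$) with the prescribed monodromy of $\sL$ along the markings amounts to forming a product of twisted Picard stacks over the stack of domains; these are algebraic, locally of finite type, with affine (indeed $\bG_m$-gerbe-like) structure maps. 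Call the resulting stack $\fB$.

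Over $\fB$ there is a universal twisted curve $\pi\colon\sC\to\fB$ with universal sheaves $\sL,\sN$; the fields $\varphi,\rho,\mu,\nu$ are sections of the pushforwards of, respectively, $\sL(-\Si^\sC_{(1,\varphi)})^{\oplus 5}$, $\sL^{-5}\otimes\omega^{\log}_{\sC/\fB}(-\Si^\sC_{(1,\rho)})$, $(\sL\otimes\sN)^{\oplus N}$, and $\sN$. I would take the total space of the corresponding cone (the abelian cone $C(\Rpi_\ast(\cdots))$ in the sense of Behrend--Fantechi, or equivalently stratify $\fB$ so that the $R^0\pi_\ast$ are vector bundles and the $R^1\pi_\ast$ vanish), obtaining a stack $\cW^{\pre,\text{big}}_{g,\gamma,\bd}$ algebraic and locally of finite type over $\fB$, hence over $\Spec\CC$. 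Then $\cW\lggd^{\pre}$ is the locally closed subscheme of $\cW^{\pre,\text{big}}$ cut out by the four ``nowhere vanishing'' conditions in Definition~\ref{def-curve}(3)--(5): each such condition is an \emph{open} condition on the base of the universal curve (the locus where a given section of a line bundle on the fibers has no common zero with another), so $\cW\lggd^{\pre}$ is an open substack of an algebraic stack locally of finite type and therefore inherits these properties. By the discussion preceding the statement, stability is an open condition, so $\cW\lggd\subset\cW\lggd^{\pre}$ is again open, hence algebraic and locally of finite type.

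It remains to upgrade ``algebraic'' to ``Deligne--Mumford'', i.e. to show the automorphism groups of stable objects are finite and reduced (unramified inertia). This is where the stability hypothesis does the work: by definition $\xi\in\cW\lggd(\CC)$ is stable exactly when $\Aut(\xi)$ is finite, and one checks that for such $\xi$ the automorphism group scheme is in fact étale — the tangent space to $\Aut(\xi)$ is computed by infinitesimal automorphisms of the twisted curve preserving the line bundles and the fields, and finiteness of the group forces this tangent space to vanish (there are no infinitesimal automorphisms that can be exponentiated, since a positive-dimensional subgroup would contradict finiteness; concretely the only possible automorphisms come from the finite band groups $\langle\gamma_i\rangle$ at the markings and from rescalings of $\sL,\sN$, and the field conditions rigidify the latter). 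Thus the inertia stack of $\cW\lggd$ is finite and unramified over $\cW\lggd$, which together with algebraicity and local finite type gives the Deligne--Mumford property. Finally, the $G=(\gm)^N$-action of \eqref{Gm} is an action of an affine algebraic group by an honest morphism $G\times\cW\lggd\to\cW\lggd$, so $\cW\lggd$ is a DM $G$-stack as claimed.

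I expect the main obstacle to be the étaleness of inertia at stable points, i.e. ruling out nontrivial \emph{infinitesimal} automorphisms rather than merely bounding the number of automorphisms — equivalently, checking that the field conditions genuinely kill the tangent directions coming from automorphisms of $\sC$ and from scalings of $\sL$ and $\sN$. This is essentially a pointwise deformation-theory computation analogous to the MSP case, and I would handle it by the same local analysis as in \cite[\S2--3]{CLLL}, noting that duplicating the $\mu$-field into $N$ copies only enlarges the set of nowhere-vanishing constraints and hence can only shrink automorphism groups, so the $N\ge 1$ case follows formally from $N=1$. All the remaining steps — algebraicity via the Picard/cone tower, openness of the nowhere-vanishing and stability conditions — are standard and routine.
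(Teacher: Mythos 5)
Your proposal is correct and takes essentially the same route as the paper's (which compresses everything into a citation of [AJ], [O] and the MSP arguments of [CLLL]): algebraicity of the stack of twisted curves with the two line bundles, the fields as a relatively affine section stack over it, openness of the nowhere-vanishing and stability conditions, and finiteness of automorphism groups (with reducedness automatic in characteristic $0$) yielding the DM property, with the $G$-action coming for free from \eqref{Gm}. Two minor quibbles that do not affect correctness: the space of sections should be taken as the direct image cone $\Spec\Sym R^1\pi_*(\cE^\vee\otimes\omega_{\sC/\fB})$ (affine and of finite type over the base) rather than via a stratification making $R^1\pi_*$ vanish, and the remark that the $N\ge 1$ case follows ``formally'' from $N=1$ by forgetting $\mu_2,\dots,\mu_N$ is not literally true (the truncated datum need not satisfy the MSP nondegeneracy conditions), though it is also not needed, since stability plus characteristic $0$ already gives unramified inertia.
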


\begin{proof}
The theorem follows immediately from that the stack $\cM^{\text{tw}}_{g,\ell}$ of
stable twisted $\ell$-pointed curves is a DM stack, and each of its connected components
is proper and of finite type (see \cite{AJ, O}). 
\end{proof}

In this paper, we will reserve the symbol ${G}$ for this action on $\cW\lggd$.

\begin{exam} [Stable maps with $p$-fields]\label{pfield}
An NMSP-field $\xi\in\cW\lggd(\CC)$ having $\mu=0$ will have  $\sN\cong \sO_\sC $ and $\nu=1$.
Then $\xi$ is reduced to  a stable map $f=[\varphi]: \Si^\sC\sub \sC\to\PP^{4}$
together with a $p$-field $\rho\in H^0(f\sta \sO_{\PP^{4}}(-5)\otimes\omega_\sC^{\log})$.
Moduli of genus $g$ $\ell$-pointed stable maps to $\PP^{4}$ with
$p$-fields, denoted by $\barM_{g,\ell}(\PP^{4},d)^p$, was first introduced in \cite{CL}.
\end{exam}

\begin{exam} [Stable 5-Spin maps with $p$-fields]\label{spinp}
An NMSP-field $\xi\in\cW\lggd(\CC)$ having $\nu=0$ will have $\rho:\sL^5\cong\omega^{\log}_\sC$, and
that $\mu$ defines a degree $d_0$ morphism $[\mu]:\sC\to \PP^{N-1}$ ($d_0=\deg(\sL\otimes\sN)$).
This way, we obtain a morphism 
\beq\label{nu2}
\cW\lggd\cap\{\xi\mid\nu=0\}\lra \cM_{g,\gamma}^{1/5,5p}\times_{\cM_{g,\gamma}\utw}
\barM_{g,\gamma}(\PP^{N-1},d_0).
\eeq
Here, $\cM_{g,\gamma}^{1/5,5p}$ is the stack of $5$-spin curves (not necessarily
stable) with fixed monodromy $\gamma$ and
$5$ $p$-fields, $\cM_{g,\gamma}\utw$ is the moduli of genus $g$ twisted nodal curves of marking $\gamma$, and
$\barM_{g,\gamma}(\PP^{N-1},d_0)$ is the moduli of stable morphisms.
The ${G}$-action on \eqref{nu2} is via its obvious action on $\barM_{g,\ell}(\PP^{N-1},d_0)$.
When $p$-fields are zero, this moduli space was studied in \cite{JKV}.
\end{exam}


\subsection{Cosection localized virtual cycle}\label{sub2.3}
In the following, we will only consider narrow NMSP fields unless otherwise mentioned.

The DM stack $\cWgg$ admits a tautological $\Gm$-equivariant perfect obstruction theory.
Let $\cD$ be the stack of data $(\sC,\Si^{\sC},\sL,\sN)$, where $\Si^{\sC}\sub\sC$ are
pointed connected twisted curves, 
and $\sL$ and $\sN$ are invertible sheaves on $\sC$, with $\sL\oplus\sN$ representable.
Clearly, $\cD$ is a smooth Artin stack, locally of finite type. 

Let
\beq\label{universal}
\pi: \Si^\cC\sub \cC\to\cWgg\quad \text{with}\quad  (\cL,\cN,\varphi,\rho, \mu,\nu)
\eeq
be the universal family of $\cWgg$. Define
$q: \cWgg\to\cD$
to be the forgetful morphism, forgetting the fields $(\varphi,\rho, \mu,\nu)$. 
The morphism $q$ is $\Gm$-equivariant with $\Gm$ acting on $\cD$ trivially.


We adopt the following convention throughout this paper. We abbreviate
$\cP=\cL^{-5}\otimes\omega^{\log}_{\cC/\cWgg}$. For $\gamma_i\in \bmu_5$, we set $m_i\in [0, 4]$  so that
$\gamma_i=\zeta_5^{m_i}$. Let $\ell_\varphi$ 
be the number of $\gamma_i$ so that $\gamma_i=(1,\varphi)$; likewise for $\ell_\rho$. We let
$\ell_0=\ell-\ell_\varphi-\ell_\rho$. 



\smallskip

We first construct its perfect obstruction theories \cite{BF, LT}.

\begin{proposition}\label{dim}
The pair $q: \cWgg\to\cD$ comes with a $\Gm$-equivariant relative perfect obstruction theory
$$E_{\cW/\cD}\lra L\bul_{\cWgg/\cD},
$$
where
$$E_{\cW/\cD}=\Bigl( R\pi\lsta\bl \cL(-\Si^\cC_{(1,\varphi)})^{\oplus 5}\oplus \cP(-\Si^\cC_{(1,\rho)})\oplus
(\bigoplus_\alpha\cL\otimes\cN\otimes\bL_\alpha)\oplus \cN\br\Bigr)\dual,
$$
where $\oplus_\alpha$ is a summation from $\alpha=1$ to $N$, and
$\bL_\alpha$ is the 1-dimensional $G$-representation where the $\alpha$-th factor of $G$ acts with weight one,
while other factors act trivially. Then the virtual dimension of $\cWgg $ is
\beq\label{vdim}
\delta(g,\gamma,\bd)\defeq
Nd_0+d_\infty+N(1-g)+\ell -4\big(\ell_\varphi+\sum_{i=1}^{\ell_0}\frac{m_i}{5}\big).
\eeq
\end{proposition}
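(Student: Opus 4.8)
The plan is to construct the relative perfect obstruction theory exactly as in the MSP case \cite{CLLL}, and then compute the virtual dimension by Riemann--Roch on twisted curves. First I would recall that $q:\cWgg\to\cD$ is the morphism forgetting the fields $(\varphi,\rho,\mu,\nu)$; since a point of $\cWgg$ over a fixed object of $\cD$ is precisely a choice of sections
$$
\varphi\in H^0(\cL(-\Si^\cC_{(1,\varphi)}))^{\oplus 5},\quad \rho\in H^0(\cP(-\Si^\cC_{(1,\rho)})),\quad \mu\in H^0(\cL\otimes\cN)^{\oplus N},\quad\nu\in H^0(\cN)
$$
(subject to the open nondegeneracy conditions, which do not affect the obstruction theory), the relative deformation--obstruction spaces are the hyper-cohomologies of the complex of sections and their obstructions, i.e. $R\pi_\ast$ applied to the relevant direct sum of line bundles. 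This is the content of the dual complex $E_{\cW/\cD}$ displayed in the statement, and the $G$-linearization is dictated by the $G$-action \eqref{Gm}: only the $\mu_\alpha$-component carries a nontrivial weight, recorded by $\bL_\alpha$. The fact that this is a genuine perfect obstruction theory in the sense of \cite{BF,LT} follows verbatim from \cite[Prop.~2.x]{CLLL} since $\cD$ is smooth and $\pi$ is a family of twisted nodal curves, so $R\pi_\ast$ of a line bundle has cohomological amplitude in $[0,1]$.

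Next I would compute the virtual dimension. Since $\cWgg$ is of finite type over the smooth Artin stack $\cD$, one has
$$
\vdim \cWgg=\dim\cD+\operatorname{rank}E_{\cW/\cD},
$$
where the rank is taken as the $K$-theory Euler characteristic $\chi$ of the four summands. The dimension of $\cD$ splits as $\dim\cM^{\tw}_{g,\ell}$ (which is $3g-3+\ell$) plus the relative dimension of the Picard stack parametrizing the pair $(\cL,\cN)$, which is $2(g-1)$ (two copies of $\operatorname{Pic}^0$ of relative dimension $g$, each an extension of the discrete degree lattice by a $\bG_m$-gerbe contributing $-1$). For the rank of $E_{\cW/\cD}$, by Riemann--Roch for a line bundle $\cF$ of fiberwise degree $e$ on a genus $g$ twisted curve, $\chi(\cF)=e+1-g-\sum\mathrm{age}$, where the age correction is the sum of the fractional monodromy indices at the orbifold points. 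Applying this to $\cL(-\Si^\cC_{(1,\varphi)})^{\oplus 5}$, to $\cP(-\Si^\cC_{(1,\rho)})=\cL^{-5}\otimes\omega^{\log}_\cC(-\Si^\cC_{(1,\rho)})$, to $\cN$, and to the $N$ copies $\cL\otimes\cN$, and adding, the discrete degree contributions combine to $Nd_0+d_\infty$, the $(1-g)$ terms combine to $N(1-g)$ (after cancellation between the $5$ copies of $\cL$, the $\rho$-bundle, the $N$ copies of $\cL\otimes\cN$, and $\cN$ against $\dim\cD$), the number-of-markings contributions assemble into $+\ell$, and the age corrections from the $\cL$-monodromies $\gamma_i=\zeta_5^{m_i}$ give $-4\big(\ell_\varphi+\sum_{i=1}^{\ell_0} m_i/5\big)$, the factor $4$ coming from the $5$ copies of $\cL$ minus the one copy contributing $+m_i$ through $\cL^{-5}$. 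Collecting everything yields precisely $\delta(g,\gamma,\bd)$ as in \eqref{vdim}.

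The routine-but-careful part—and the step I expect to be the main obstacle to get exactly right—is the bookkeeping of the age/twisting contributions: one must keep track of the monodromy of $\cL$ along each marking, note that $\Si^\cC_{(1,\varphi)}$ and $\Si^\cC_{(1,\rho)}$ are untwisted (monodromy $\langle 1\rangle$) so they contribute only as honest divisors in the degree count and not in the age, and verify that the $5$-fold and $N$-fold multiplicities distribute the constants $(1-g)$, the marking counts, and the $m_i/5$-corrections with the right coefficients. In particular the coefficient $4$ in front of $\ell_\varphi$ must be seen to arise from $5\cdot\dim(\text{stuff at }(1,\varphi)\text{ point}) - 1$ through the interaction of $\cL^{\oplus 5}$ with $\cL^{-5}$ in the $\rho$-bundle at the broad $(1,\varphi)$-markings, while the narrow markings contribute through the fractional part $m_i/5$ only. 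Once the correspondence of each term in $\delta(g,\gamma,\bd)$ with a specific cohomological contribution is laid out in a table, the identity is immediate; I would present it that way rather than as a single opaque computation. The $G$-equivariance is automatic throughout since $G$ acts trivially on $\cD$ and the virtual dimension is insensitive to the linearization.
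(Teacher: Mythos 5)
Your overall strategy is the right one, and it is essentially what the paper does: the paper's proof is a one-line reference to \cite{CLLL}, where the obstruction theory is exactly the $R\pi_*$ of the bundle of fields relative to the smooth stack $\cD$, and the dimension formula is orbifold Riemann--Roch plus $\dim\cD=5g-5+\ell$. Your construction of $E_{\cW/\cD}$, the remark on $G$-linearization via $\bL_\alpha$, and the identity $\mathrm{vir.dim}=\dim\cD+\chi$ are all fine.

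However, the Riemann--Roch bookkeeping you sketch — which you yourself flag as the delicate step — contains a genuine error and omits the one nonstandard input. You attribute the coefficient $-\tfrac{4}{5}m_i$ to ``the $5$ copies of $\cL$ minus the one copy contributing $+m_i$ through $\cL^{-5}$''. But $\cL^{-5}$ (hence $\cP=\cL^{-5}\otimes\omega^{\log}$) has \emph{trivial} monodromy at a $\zeta_5^{m_i}$-marking, so it contributes no age at all; the five copies of $\cL$ give $-\sum_i m_i$, and with your accounting the formula would come out with $-\sum m_i$ instead of $-\tfrac45\sum m_i$ (or, depending on what you silently assume about $\cL\otimes\cN$, with spurious $N$-dependent terms $-\tfrac N5\sum m_i$). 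The compensating $+\tfrac15 m_i$ per narrow $\bmu_5$-marking comes from the $\cN$-summand: at such a marking $\varphi$ vanishes automatically (it is a section of a bundle with nontrivial monodromy), so condition (4) of Definition \ref{def-curve} forces $\mu\neq 0$ there, hence $\cL\otimes\cN$ must be untwisted at that marking and $\cN$ has age $\tfrac{5-m_i}{5}$. With this, $N\chi(\cL\otimes\cN)=N(d_0+1-g)$ and $\chi(\cN)=d_\infty+1-g-\sum_{i=1}^{\ell_0}\tfrac{5-m_i}{5}$, and the terms assemble to \eqref{vdim}: the $-4\ell_\varphi$ is $-5\ell_\varphi$ from twisting $\cL^{\oplus5}$ down by $\Si^\cC_{(1,\varphi)}$ plus $+\ell_\varphi$ from the pole of $\omega^{\log}$ in $\cP(-\Si^\cC_{(1,\rho)})$, and the $-\tfrac45\sum m_i$ is $-\sum m_i$ from $\cL^{\oplus5}$ plus $+\tfrac15\sum m_i$ from the age of $\cN$. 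So you must determine (and justify) the monodromy of $\cN$ at the narrow markings before the table you propose can close; as written your accounting does not reproduce \eqref{vdim}. A terminological slip to fix as well: the $(1,\varphi)$-markings are \emph{narrow} in this paper's convention (broad means $\gamma_i=1$), although it is true that they are scheme points and enter only through divisor twists, not ages.
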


\begin{proof}
The proof is exactly the same as in \cite{CLLL}.
\end{proof}

The associated relative obstruction sheaf of $q:\cWgg\to \cD$ is
$$\Ob_{\cWgg/\cD}= R^1\pi\lsta\bl\cL(-\Si^\cC_{(1,\varphi)})^{\oplus 5}\oplus
\cP( -\Si^\cC_{(1,\rho)})\oplus
(\oplus_\alpha\cL\otimes\cN\otimes\bL_\alpha)\oplus \cN\br;
$$
the absolute obstruction sheaf $\Ob_{\cWgg}$
is defined by the exact sequence
\beq\label{quot}
q\sta T_{\cD}\lra \Ob_{\cWgg/\cD}
\lra \Ob_{\cWgg}\lra 0.
\eeq
We define a cosection (homomorphism), for narrow $\gamma$, 
\beq\label{co-1}
\sigma: \Ob_{\cWgg/\cD}\lra \sO_{\cWgg}
\eeq
by the rule that at an $S$-point $\xi\in\cWgg(S)$ 
as in \eqref{field}, and for
$$
(\dot\varphi,\dot\rho,\dot\mu,\dot\nu)\in
H^1\bl \sL(-\Si^\sC_{(1,\varphi)})^{\oplus 5}\oplus
\sL^{-5}\otimes\omega^{\log}_{\sC/S} (-\Si^\sC_{(1,\rho)})\oplus(\oplus_\alpha\sL\otimes\sN\otimes\bL_\alpha) 
\oplus \sN\br,
$$
\beq\label{mixed-cosection}
\sigma(\xi) 
(\dot\varphi,\dot\rho,\dot\mu,\dot\nu) = 5\rho\sum \varphi_{i}^{4}\dot\varphi_{i}
+\dot\rho\sum\varphi_i^5 \in H^1(\omega_{\sC/S})\equiv H^0(\sO_\sC)\dual.
\eeq
Here the term $5\rho\sum \varphi_{i}^{4}\dot\varphi_{i}
+\dot\rho\sum\varphi_i^5$ lies in $H^1(\sC,\omega_{\sC/S})$ because when $\gamma_i\neq (1, \rho)$,
$\varphi_j|_{\Si^{\sC}_i}=0$.


\begin{lemm}\label{lem:co} Let $\gamma$ be narrow. Then
the rule \eqref{mixed-cosection} defines a $\Gm$-equivariant cosection $\sigma$ as in \eqref{co-1}.
It lifts to a $\Gm$-equivariant homomorphism $\Ob_{\cWgg}\to \sO_{\cWgg}$.
\end{lemm}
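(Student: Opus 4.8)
The plan is to verify the two assertions of the lemma in order: first that \eqref{mixed-cosection} is well-defined and gives a $\Gm$-equivariant cosection, then that it extends over the absolute obstruction sheaf. For the first part, I would start locally on an $S$-point $\xi$. The key point, already flagged in the excerpt, is that the expression $5\rho\sum_i\varphi_i^4\dot\varphi_i+\dot\rho\sum_i\varphi_i^5$ is a section of $H^1(\sC,\omega_{\sC/S})$: the first summand is built from $\rho\in H^0(\sL^{-5}\otimes\omega^{\log}_{\sC/S}(-\Si^\sC_{(1,\rho)}))$ and $\dot\varphi_i\in H^1(\sL(-\Si^\sC_{(1,\varphi)}))$, whose tensor product lands in $H^1$ of $\sL^{-4}\otimes\sL^{4}\otimes\omega^{\log}_{\sC/S}(-\cdots)=\omega^{\log}_{\sC/S}$ twisted down by the marking divisors; since $\gamma$ is narrow, at every marking $\Si^\sC_i$ the monodromy of $\sL$ is a nontrivial $\zeta_5^{m_i}$, which forces $\varphi_j|_{\Si^\sC_i}=0$, so the product in fact has no poles along $\Si^\sC$ and lies in $H^1(\omega_{\sC/S})\cong H^0(\sO_\sC)^\vee$ by relative Serre duality. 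One then checks naturality in $S$ (base change), which is routine from cohomology-and-base-change for the perfect complexes $R\pi\lsta$ involved, so that the local rule glues to a sheaf homomorphism \eqref{co-1}. $\Gm$-equivariance is immediate because $\sigma$ only involves $\varphi$ and $\rho$, on which $\Gm$ acts trivially, while the $\mu$- and $\nu$-directions $\dot\mu,\dot\nu$ do not enter the formula at all; hence $\sigma$ is constant under the $\Gm$-action and therefore equivariant for the trivial action on $\sO_{\cWgg}$.

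For the second assertion, recall the defining sequence \eqref{quot}: $\Ob_{\cWgg}$ is the cokernel of $q\sta T_\cD\to\Ob_{\cWgg/\cD}$. To lift $\sigma$ to $\Ob_{\cWgg}\to\sO_{\cWgg}$ it suffices to show that the composite $q\sta T_\cD\to\Ob_{\cWgg/\cD}\xrightarrow{\sigma}\sO_{\cWgg}$ vanishes. The tangent space $T_\cD$ at $(\sC,\Si^\sC,\sL,\sN)$ records infinitesimal deformations of the twisted curve together with the line bundles $\sL$, $\sN$; the Kodaira–Spencer map sends such a deformation to the induced first-order change of the fields inside $\Ob_{\cWgg/\cD}$. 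So I must show that for any deformation of $(\sC,\sL)$, the resulting $(\dot\varphi,\dot\rho)$ satisfy $5\rho\sum\varphi_i^4\dot\varphi_i+\dot\rho\sum\varphi_i^5=0$ in $H^1(\omega_{\sC/S})$. This is the familiar mechanism behind cosection localization for $p$-fields (Chang–Li, Chang–Li–Li–Liu): the quantity $\sum_i\varphi_i^5$, viewed as a section of $\sL^5$, paired against $\rho\in H^0(\sL^{-5}\otimes\omega^{\log})$, gives $\rho\cdot W(\varphi)\in H^0(\omega^{\log})$, and its first-order variation along a deformation of the pair $(\sC,\sL)$ is exactly $5\rho\sum\varphi_i^4\dot\varphi_i+\dot\rho\sum\varphi_i^5$ plus a term coming from deforming $\omega^{\log}$ itself; the point is that this total variation is the image of a global section, hence is zero in $H^1$. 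Concretely I would argue via the interpretation of $\sigma$ as differentiation of the superpotential $\rho\cdot(x_1^5+\cdots+x_5^5)$ composed with $(\varphi,\rho)$, following the proof of the analogous statement in \cite{CLLL}, which applies verbatim here since duplicating $\mu$ into $N$ copies does not touch the $(\varphi,\rho)$-sector.

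The main obstacle is the second part — checking that $\sigma$ kills the image of $q\sta T_\cD$, i.e.\ the compatibility of the cosection with deformations of the underlying curve and line bundles, rather than just the fields. This is genuinely the content of the lemma (the first part being essentially a cohomological degree count once narrowness is used), and it is where one must be careful about the twisted-curve structure, the logarithmic twist $\omega^{\log}_{\sC/S}$, and the marking divisors $\Si^\sC_{(1,\varphi)}$, $\Si^\sC_{(1,\rho)}$. Fortunately, since the cosection and the relevant obstruction summands are literally the same as in the MSP ($N=1$) case treated in \cite{CLLL}, I expect the argument there to transfer without change, and I would simply cite it after setting up the notation above. The only genuinely new verification is that nothing in the $\mu$/$\nu$ directions — the $\bigoplus_\alpha\cL\otimes\cN\otimes\bL_\alpha$ and $\cN$ summands of $\Ob_{\cWgg/\cD}$ — can interfere, which is clear since $\sigma$ annihilates those summands identically and the $G$-weights $\bL_\alpha$ play no role in the $(\dot\varphi,\dot\rho)$-component.
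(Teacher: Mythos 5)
Your proposal matches the paper's proof, which simply observes that the argument of \cite[Lemma 2.12]{CLLL} carries over verbatim because the new summands $\bigoplus_\alpha\cL\otimes\cN\otimes\bL_\alpha$ and $\cN$ are annihilated by $\sigma$; your reconstruction (degree count plus narrowness for well-definedness, vanishing of $\sigma$ on the image of $q\sta T_\cD$ via the superpotential-variation argument of \cite{CLLL} for the lift) is the same in substance. One small slip worth fixing: at a $(1,\rho)$-marking the monodromy of $\sL$ is trivial and $\varphi$ need not vanish there, so the absence of a pole along $\Si^\sC_{(1,\rho)}$ comes not from monodromy but from the twist $-\Si^\sC_{(1,\rho)}$ built into the sheaf containing $\rho$ and $\dot\rho$, exactly as the paper's remark following \eqref{mixed-cosection} indicates.
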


We call the lifted homomorphism a cosection  of $\Ob_{\cWgg}$. The proof of the lemma is exactly the same as that of
\cite[Lemma 2.12]{CLLL}, and will be omitted.

\smallskip

As in \cite{KL}, we define the degeneracy locus of $\sigma$ to be the (reduced) substack $\cWggm\sub\cWgg$
whose closed points are 
\begin{eqnarray}\label{deg-loci}
\cWggm(\CC)=\{\xi\in \cWgg(\CC)\mid \sigma|_\xi=0\}.
\end{eqnarray}
We have the following criterion,
whose proof is the same as in \cite[Lemma 2.13]{CLLL}.

\begin{lemm}\label{degenerate-locus} Let $\gamma$ be narrow. A $\xi\in \cWgg(\CC)$
lies in $\cWggm(\CC)$ if and only if
\beq
(\varphi=0 )\cup ( \varphi_1^5+\cdots+\varphi_5^5=\rho=0)=\sC.
\eeq
\end{lemm}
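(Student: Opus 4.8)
\textbf{Proof proposal for Lemma \ref{degenerate-locus}.}

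The plan is to unwind the definition of the cosection $\sigma$ pointwise and use the duality pairing between $H^1(\sC,\omega_{\sC/S})$ and $H^0(\sC,\sO_\sC)$. Fix $\xi\in\cWgg(\CC)$ as in \eqref{field}. By Lemma \ref{lem:co}, $\sigma|_\xi=0$ means that the induced map $\Ob_{\cWgg}|_\xi\to\CC$ is zero; since the quotient sequence \eqref{quot} is right exact and $q\sta T_\cD$ contributes nothing to the pairing once we test against $H^0(\sO_\sC)$, this is equivalent to the vanishing of the relative cosection $\sigma(\xi)\colon H^1(\cE|_\xi)\to\CC$, where $\cE=\cL(-\Si^\sC_{(1,\varphi)})^{\oplus5}\oplus\sL^{-5}\otimes\omega^{\log}_\sC(-\Si^\sC_{(1,\rho)})\oplus(\oplus_\alpha\sL\otimes\sN\otimes\bL_\alpha)\oplus\sN$. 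From formula \eqref{mixed-cosection}, $\sigma(\xi)$ only involves the $\dot\varphi$ and $\dot\rho$ arguments, and vanishes on them iff the two sections
\[
5\rho\,\varphi_i^4\in H^0\big(\sL^{-4}\otimes\omega^{\log}_\sC(\Si^\sC_{(1,\varphi)}-\Si^\sC_{(1,\rho)})\otimes\omega_\sC\dual\cdots\big)\quad(i=1,\dots,5)
\]
and $\sum_i\varphi_i^5$, viewed as multiplication operators $H^1(\sL(-\Si^\sC_{(1,\varphi)}))\to H^1(\omega_\sC)$ resp.\ $H^1(\sL^{-5}\otimes\omega^{\log}_\sC(-\Si^\sC_{(1,\rho)}))\to H^1(\omega_\sC)$, are both identically zero maps. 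Here I would first record, as noted right after \eqref{mixed-cosection}, that narrowness forces $\varphi_j|_{\Si^\sC_i}=0$ unless $\gamma_i=(1,\rho)$, so these products genuinely land in $H^1(\omega_{\sC/S})$ and the serre-dual reformulation is legitimate.

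Next I would translate ``multiplication by a section $s$ of a line bundle $\sF$ induces the zero map $H^1(\sM)\to H^1(\sM\otimes\sF)$'' into a pointwise condition. By Serre duality on the (twisted, possibly nodal) curve $\sC$, this map is dual to multiplication by $s$ on $H^0(\sM\dual\otimes\sF\dual\otimes\omega_\sC)\to H^0(\sM\dual\otimes\omega_\sC)$; and a standard argument on a connected curve (for each line bundle the ``multiplication by $s$'' map on global sections of a suitable twist is injective unless $s$ vanishes identically on an entire irreducible component) shows this dual map is zero iff $s$ vanishes on all of $\sC$. Applying this with $s=5\rho\,\varphi_i^4$ for each $i$ and with $s=\sum_i\varphi_i^5$, the condition $\sigma|_\xi=0$ becomes: $\rho\,\varphi_i^4\equiv0$ on $\sC$ for all $i$, and $\varphi_1^5+\cdots+\varphi_5^5\equiv0$ on $\sC$. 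The first five equations say that on every point of $\sC$, either $\rho=0$ or all $\varphi_i=0$; since $\sC$ is a curve, decomposing into the closed locus $\{\varphi=0\}$ and its complement shows this is exactly $(\varphi=0)\cup(\rho=0)=\sC$ as sets. Intersecting with the last equation $\sum\varphi_i^5=0$ (which is automatic on $\{\varphi=0\}$) yields precisely $(\varphi=0)\cup(\varphi_1^5+\cdots+\varphi_5^5=\rho=0)=\sC$, which is the asserted description of $\cWggm(\CC)$.

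The main obstacle is the second step: carefully justifying the ``multiplication by $s$ is zero on $H^1$ iff $s\equiv0$'' equivalence on a \emph{nodal twisted} curve rather than a smooth one. On a smooth connected curve this is immediate from Serre duality plus the fact that a nonzero section of a line bundle is a non-zero-divisor; on a nodal curve one must argue component by component, handle the gluing/normalization, and keep track of the orbifold structure at the $\bmu_5$-markings, being careful that the relevant twists of $\omega_\sC$ really are the dualizing sheaf appearing in \eqref{mixed-cosection}. Since this is precisely the computation carried out in \cite[Lemma 2.13]{CLLL} for $N=1$, and the presence of the extra $\mu_\alpha$-fields changes nothing in the cosection \eqref{mixed-cosection}, I would invoke that argument verbatim; the only new bookkeeping is confirming that adding the $\oplus_\alpha\cL\otimes\cN\otimes\bL_\alpha$ summands to $\cE$ does not alter $\sigma(\xi)$, which is clear from the defining formula. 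Hence the proof reduces to citing the $N=1$ case with the trivial remark that $\sigma$ ignores the duplicated fields, which is why the paper states it can be omitted.
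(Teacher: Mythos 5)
Your proposal is correct and follows essentially the same route as the paper, which simply observes that the cosection \eqref{mixed-cosection} does not involve the duplicated fields $\mu,\nu$ and invokes \cite[Lemma 2.13]{CLLL} verbatim. One small remark: the cleanest justification of your key step is that for the relevant target $H^1(\omega_\sC)$ the Serre-dual map is $H^0(\sO_\sC)\to H^0(\sM\dual\otimes\omega_\sC)$, $1\mapsto s$, so ``multiplication by $s$ is zero on $H^1$ iff $s\equiv 0$'' is immediate (your more general formulation, argued via injectivity on components, is not true for an arbitrary target line bundle and is unnecessary here).
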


Applying  \cite{KL,CoVir}, we obtain the cosection localized virtual cycle
$$
[\cWgg]\virt \in A^G_{\delta} \bl \cWggm\br,\quad \delta=\delta(g,\gamma,\bd).
$$

\section{Properties of the moduli of NMSP fields}
\def\Slsta{{S\lsta}}

In this section, we fix a narrow $(g,\gamma, \bd)$; we abbreviate $\cW=\cWgg$, and consequently
abbreviate $\cW^-=\cWggm$. We prove that $\cW^-$ is proper.

\subsection{Stability criterion}\label{Sub3.1}
We denote by $\eta_0\in S$ a closed point in an affine smooth curve,
and denote $S\lsta=S-\eta_0$ its complement.

In using valuative criterion to prove properness, we need to take a finite base change $S'\to S$
ramified over $\eta_0$. By shrinking $ S$ if necessary,
we assume that there is an \'etale $S\to \Ao$ so that $\eta_0$ is the only point lying over $0\in\Ao$.
In this  way, for any positive integer $a$,  we can form
$S_a=S\times_{\Ao} \Ao$, where $\Ao\to \Ao$ is via $t\mapsto t^a$. 
Thus  $\eta'_0\in S_a$ lying over $\eta_0\in S$ is the only closed point lying over $0\in \Ao$, of ramification index $a$.

We will use subscript ``$\ast$" to denote families over $S\lsta$.  Hence $\xi\lsta\in\cW^{\text{pre}}(S\lsta)$($=
\cW^{\text{pre}}_{g,\gamma,\bd}(S\lsta)$)
takes the form $\xi_*=\big(\Si^{\sC_\ast}, \sC_*, \sL_*,\sN\lsta, \cdots\big)$.

\vsp
We first characterize stable objects in $\cW^{\text{pre}}(\CC)$. We say that an irreducible component $\sE\sub\sC$
is a rational curve if it is smooth and its coarse moduli is isomorphic to $\Po$.
We say $(g,\gamma,\bd)$ is in the stable range if the following holds: when $g=0$ then $\ell\ge 3$ or $\bd\ne (0,0)$, and when $g=1$ then
$\ell\ge 1$ or $\bd\ne (0,0)$.

%

\begin{lemm}\label{stable-cri}
Let $(g,\gamma,\bd)$ be in the stable range and let $\xi\in \cW^{\mathrm{pre}-}(\CC)$.
Then $\xi$ is unstable if $\sC$ contains a rational curve $\sE$ such that one of the following holds:
\begin{enumerate}
\item  $\sE\cap (\Si^\sC\cup \sC_{\mathrm{sing}})$ contains two points, $\sL^{\otimes 5}|_\sE\cong \sO_\sE$, and $\sL\otimes\sN|_\sE\cong\sO_\sE$;
\item 
$\sE\cap (\Si^\sC\cup \sC_{\mathrm{sing}})$ contains one point,
and either $\rho|_\sE$ is nowhere zero and $\sL\otimes\sN|_\sE\cong\sO_\sE$, or $\sL|_\sE\cong \sN|_\sE\cong \sO_\sE$.
\end{enumerate}
\end{lemm}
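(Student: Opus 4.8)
The plan is to prove instability by exhibiting, for each of the two configurations, a one-parameter family of automorphisms of $\xi$, thereby showing $\Aut(\xi)$ is infinite. Since $\sE$ is a rational curve meeting the rest of the special locus $\Si^\sC\cup\sC_\sing$ in a bounded way, the idea is that the $\CC^*$ acting on the coarse moduli $\Po=\sE$ fixing the marked/nodal points lifts (or nearly lifts) to an automorphism of the line bundles $\sL, \sN$ and preserves the fields. First I would recall that an automorphism of $\xi$ supported on $\sE$ is a pair: an automorphism $a$ of the twisted curve fixing $\sE\cap(\Si^\sC\cup\sC_\sing)$ pointwise (to glue with the identity on the rest of $\sC$), together with linearizations $b,c$ on $\sL|_\sE,\sN|_\sE$ compatible with $a$ and fixing all the fields. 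When $\sE$ meets the special locus in $\le 2$ points the group of such $a$ is at least $\CC^*$ (fixing two points) or the full affine group when there is one point; what must be checked is that the action extends to the bundles and the fields in a way that is trivial on the complement of $\sE$.

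For case (1): $\sE\cap(\Si^\sC\cup\sC_\sing)$ has two points, say $0,\infty$, and $\CC^*$ acts on $\sE$ fixing them. On $\sE$ we have $\sL^{\otimes 5}|_\sE\cong\sO_\sE$ and $(\sL\otimes\sN)|_\sE\cong\sO_\sE$; hence $\sL|_\sE$ is 5-torsion and $\sN|_\sE\cong\sL^{-1}|_\sE$ is also torsion, so $\sL|_\sE$ and $\sN|_\sE$ are (pullbacks of) degree-zero bundles on a $\CC^*$-curve and the $\CC^*$-action on $\sE$ lifts to them; I would fix such lifts $b,c$. By Definition \ref{def-curve}(5), $\rho\in H^0(\sL^{-5}\otimes\omega^{\log}(-\Si^\sC_{(1,\rho)}))$, and $\deg(\omega^{\log}_\sC|_\sE)=-2+\#\{0,\infty\}=0$ away from further markings on $\sE$, while $\sL^{-5}|_\sE\cong\sO_\sE$; since $\xi$ is narrow, any markings on $\sE$ beyond the two attaching points are in $\bmu_5^\na$ and force $\varphi$ to vanish there — I would note there are no such extra markings needed for the argument if we absorb them into the two-point count, or handle them as fixed points of the $\CC^*$. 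The key point is that $\varphi_i|_\sE\in H^0(\sL|_\sE)$ and $\mu_j|_\sE\in H^0((\sL\otimes\sN)|_\sE)=H^0(\sO_\sE)$, $\nu|_\sE\in H^0(\sN|_\sE)$ all lie in degree-zero spaces, and by the nowhere-vanishing conditions (3),(4) the relevant ones are nonzero constants; such sections of a $\CC^*$-equivariant degree-zero bundle are $\CC^*$-eigenvectors, so after rescaling $b,c$ by characters we can make the induced action on every field trivial. (When a field is identically zero on $\sE$ there is nothing to impose.) Hence we obtain a $\CC^*$ of automorphisms of $\xi$, extended by the identity over $\sC\setminus\sE$, so $\xi$ is unstable.

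For case (2): $\sE\cap(\Si^\sC\cup\sC_\sing)$ has one point $p$. Now $\Aut(\sE,p)\supset\CC^*$ already (the dilations fixing $p$, with the "other" point moving), and in the first sub-case $\rho|_\sE$ nowhere zero with $(\sL\otimes\sN)|_\sE\cong\sO_\sE$: here $\rho|_\sE$ trivializes $\sL^{-5}\otimes\omega^{\log}|_\sE$, and $\varphi=0$ on $\sE$ is forced by Lemma \ref{degenerate-locus} since we are in $\cW^{\mathrm{pre}-}$ and $\rho$ does not vanish, so only $\mu,\nu$ survive; one then linearizes $\sL|_\sE,\sN|_\sE$ (again torsion/degree-zero after using $(\sL\otimes\sN)|_\sE\cong\sO_\sE$ and that $\rho$ forces $\sL^5|_\sE\cong\omega^{\log}|_\sE$ of controlled degree) and checks the one-dimensional groups of field-preserving linearizations assemble to a positive-dimensional automorphism group. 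In the second sub-case $\sL|_\sE\cong\sN|_\sE\cong\sO_\sE$, all four bundles restricted to $\sE$ are trivial, the fields are constants, and the $\CC^*$ on $\sE$ fixing $p$ together with suitable characters on the (trivial) $\sL,\sN$ preserves everything; again $\Aut(\xi)\supset\CC^*$.

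The main obstacle I anticipate is the bookkeeping at the attaching node(s): one must verify that the automorphism of $\sE$ together with its linearization on $\sL,\sN$ glues with the identity on the complementary components, which requires the action on the fibers of $\sL,\sN$ at $\sE\cap\sC_\sing$ to be trivial. This is exactly why one needs two fixed points in case (1) (so the $\CC^*$ weight at the node can be normalized to zero) versus only one in case (2) (where the node is the unique fixed point and the weight there is automatically absorbed). Getting the torsion/degree-zero reductions of $\sL|_\sE,\sN|_\sE$ right — in particular confirming $\CC^*$-equivariant structures exist and that the finitely many field-constraints cut out a subgroup of $\CC^*\times(\CC^*)^2$ that still has positive dimension — is the technical heart; everything else follows the template of \cite[Lemma 2.13]{CLLL} and the stability discussion in \cite{CLLL}.
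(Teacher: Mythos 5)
Your proof goes in the opposite direction from the one the paper actually proves and uses. Despite the word ``if'' in the statement, the paper's proof begins ``Let $\xi\in\cW^{\mathrm{pre}-}(\CC)$ be unstable'' and, following \cite[Lemma 3.3]{CLLL}, produces an irreducible $\sE\sub\sC$ on which the image of $\Aut_\sE(\xi)\to\Aut(\sE)$ is infinite; the stable-range hypothesis enters exactly here, to rule out $\sE=\sC$, after which $\sE$ is rational with at most two special points, and a case analysis on $\rho|_\sE$ (using that $\xi$ lies in the degeneracy locus, so $\varphi|_\sE=0$ when $\rho|_\sE\ne 0$, and that the induced map $[\mu]|_\sE\to\PP^{N-1}$ must be constant) yields the triviality statements on $\sL$, $\sN$ in (1) and (2). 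In other words the content established, and the only content invoked later (in Lemma \ref{prop-extension2} one starts from a non-stable limit $\xi_0$ and must \emph{find} rational curves $\sE$ with $\sL\otimes\sN|_\sE\cong\sO_\sE$ in order to contract them), is the implication ``$\xi$ unstable $\Rightarrow$ such an $\sE$ exists.'' You instead assume such an $\sE$ exists and construct a $\CC^*$ of automorphisms; this converse is not what the paper proves, and the fact that your argument never uses the stable-range hypothesis is a symptom of the mismatch.

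Moreover, even as a proof of the literal ``if'' reading your construction has a genuine gap: conditions (1)--(2) only say that $\sE\cap(\Si^\sC\cup\sC_{\mathrm{sing}})$ \emph{contains} two (resp.\ one) points. If $\sE$ in fact carries three or more markings/nodes, every automorphism of $\sE$ fixing them is trivial, so no one-parameter subgroup exists and the whole construction collapses; your suggestion to ``absorb them into the two-point count, or handle them as fixed points of the $\CC^*$'' cannot work, and with this reading the literal implication is simply false. Separately, the weight bookkeeping needs more care than asserted: you must exhibit a single linearization of $\sL|_\sE$ and $\sN|_\sE$ whose fibre action at the attaching node(s) is trivial (so it glues with the identity on the complement) and which simultaneously fixes all of $\varphi_1,\dots,\varphi_5$, $\mu_1,\dots,\mu_N$, $\nu$ and $\rho$; this is immediate when the restricted bundles are trivial with one-dimensional $H^0$, but in the stacky (torsion $\sL|_\sE$) cases and in case (2) with $\deg\sL|_\sE\in\frac15\ZZ\setminus\ZZ$ it is claimed rather than proved. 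The fix is to recast the argument in the paper's direction: start from an unstable $\xi$, extract $\sE$ with infinite automorphism image as in \cite{CLLL}, and derive the bundle conditions from the invariance of the fields.
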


\begin{proof} 
Let $\xi\in\cW^{\mathrm{pre}-}(\CC)$ be unstable. For each irreducible $\sE\sub\sC$,
let $\Aut_{\sE}(\xi)$ be the subgroup of $\Aut(\xi)$ leaving $\sE$ invariant. As argued in the proof of
\cite[Lemma 3.3]{CLLL}, there is an irreducible $\sE\sub\sC$ so that
the group $\text{Im}( \Aut_{\sE}(\xi)\to \Aut(\sE))$ is infinite.

In case $\sE=\sC$, then it is easy to see that $\xi$ is not stable only when $(g,\gamma,\bd)$ is not in stable range. Thus by assumption,
$\sE\ne\sC$, and then $\sE$ has arithmetic genus zero (thus smooth), and $\sE\cap \sC_{\text{sing}}\ne \emptyset$.

We now consider the case when $\sE\cap(\Si^\sC\cup \sC_{\text{sing}})$ contains one point,
say $p\in\sE$. Suppose $\rho|_\sE=0$, then the same argument as in the proof of \cite[Lemma 3.3]{CLLL} shows that
$\nu_2|_\sE$ is nowhere vanishing; $\sN|_\sE\cong \sO_\sE$, and $(\varphi, \mu)|_\sE$ is a nowhere vanishing section of $H^0(\sL^{\oplus (5+N)}|_\sE)$.
Since ${G}$ is infinity and $(\varphi, \mu)|_\sE$ is ${G}$-equivariant, this is possible only if
$\sL|_\sE\cong\sO_\sE$ and $(\varphi, \mu)|_\sE$ is a constant section. This is (2).

Suppose $\rho|_\sE\ne0$. We argue that $\sL\otimes\sN|_\sE\cong\sO_\sE$ and $\mu|_{\sE}$ is a constant section.
Indeed, since $\xi\in \cW^{\text{pre}-}\lggd(\CC)$, we have $\varphi|_\sE=0$, thus $\mu|_{\sE}$ is nowhere vanishing and
defines a morphism $[\mu]: \sC\to \PP^{N-1}$. Since ${G}$ is infinite, $[\mu]:\sE\to \PP^{N-1}$
is not stable, thus $[\mu]|_\sE$ is a constant map. This is  possible if and only if
$\sL\otimes\sN|_\sE\cong\sO_\sE$.

Once $\sL\otimes\sN|_\sE\cong\sO_\sE$ is established, the same argument as in the proof of \cite[Lemma 3.3]{CLLL} shows
that this must be case  (2).

For the case (1), by the similar argument in the proof of \cite[Lemma 3.3]{CLLL}, we obtain $\sL^5|_\sE\cong \sO_\sE$. If $\rho|_\sE\neq 0$, by the similar argument as above, we get
$\sL\otimes\sN|_\sE\cong\sO_\sE$. If $\rho|_\sE=0$, $\nu|_\sE$ must be a constant. Thus $\sN|_\sE\cong \sO_\sE$ and $\mu\in H^0(\sL^{\oplus N}|_\sE)$. Therefore we get a map $\sE\to \mathbb P^{4+N}$ induced by the sections $(\varphi_1,\ldots,  \varphi_5, \mu_1, \ldots, \mu_N)$. Since ${G}$ is infinite, the map must be a constant resulting $\sL|_\sE\cong \sO_\sE$.
In any case, we  have $\sL\otimes\sN|_\sE\cong\sO_\sE$. This is case (1).
\end{proof}

%

\subsection{Valuative criterion for properness}
We begin with a simple extension result.

\begin{lemm}\label{prop-extension0}
Let $\xi_*\in \cW^{\mathrm{pre}-}(\Slsta)$
be such that $\rho\lsta=0$. Then possibly after a finite base change, $\xi\lsta$ extends to a $\xi\in \cW^{\mathrm{pre}-}(S)$.
\end{lemm}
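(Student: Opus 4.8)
The plan is to reduce the extension problem, via the hypothesis $\rho\lsta=0$, to a question about extending a family of twisted curves together with invertible sheaves and sections where the quintic equation constraint disappears. Since $\rho\lsta=0$, condition (5) of Definition~\ref{def-curve} forces $\nu\lsta$ to be nowhere zero along the fibers over $\Slsta$; in particular $\sN\lsta$ is fiberwise trivial of degree $d_\infty=0$ in that direction, and after twisting we may assume $\sN\lsta\cong\sO$ with $\nu\lsta=1$. Then the remaining data is a family of stable maps (with $\mu/\nu$ playing the role of extra coordinates) into $\PP^{4+N}$ together with a vanishing $p$-field $\rho$; so the object is essentially a point of $\barM_{g,\gamma}(\PP^{4+N},d_0)$ as in Example~\ref{pfield}, up to the twisted-curve bookkeeping.

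First I would run the standard argument: extend the family of twisted pointed nodal curves $\Si^{\sC_\ast}\sub\sC_\ast$ over $\Slsta$ to one over $S$ using properness of $\cM^{\tw}_{g,\ell}$ (the stack of stable twisted curves is proper on each component, \cite{AJ,O}); here one allows a finite base change $S_a\to S$ as set up in \S\ref{Sub3.1} so that the limiting curve is stable. Next, extend the line bundle $\sL\otimes\sN$ (equivalently $\sL$, once $\sN\cong\sO$) by taking the limit in the relative Picard stack, choosing the extension so that the sections $\varphi=(\varphi_i)$ and $\mu=(\mu_j)$ extend to sections over $S$ that are not all simultaneously zero on any component of the central fiber — this is exactly the reduction-of-sections / valuative-criterion step familiar from the construction of moduli of stable maps, where one twists the limiting line bundle along components of the special fiber to absorb base points. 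Since $\rho\lsta=0$, we simply extend $\rho$ by $0$, so conditions (4) and (5) about nonvanishing of $(\varphi,\mu)$ and of $(\rho,\nu)=(0,\nu)$ reduce to: $(\varphi,\mu)$ nowhere zero (arranged in the previous step) and $\nu$ nowhere zero (automatic since $\nu\equiv 1$). Finally, invoke the stability criterion (Lemma~\ref{stable-cri}) to contract any unstable rational components of the central fiber — components where $\sL^{\otimes 5}$ and $\sL\otimes\sN$ become trivial and too few special points sit — thereby landing in $\cW^{\pre}(S)$; and since $\rho\equiv 0$ the central fiber automatically satisfies the criterion of Lemma~\ref{degenerate-locus} (indeed $\varphi_1^5+\cdots+\varphi_5^5=\rho=0$ holds on all of $\sC$ because $\rho=0$ everywhere), so the limit lies in $\cW^{\pre-}(S)$ as required.

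The main obstacle I expect is the section-extension step: after extending the line bundle $\sL$ naively, the sections $\varphi$ and $\mu$ may acquire common zeros along components of the central fiber or may all vanish identically on some component, violating (3) and (4). The fix is the standard but somewhat delicate bookkeeping of twisting $\sL$ by divisors supported on central-fiber components and simultaneously blowing up / inserting rational bridges in $\sC$ so that the resulting sections are base-point free and nonzero on every component; one must check this can be done compatibly with the twisted (orbifold) structure at the markings and nodes, and that the degree data $\bd=(d_0,d_\infty)$ is preserved. Once that combinatorial surgery is carried out, contracting the now-unstable rational tails via Lemma~\ref{stable-cri} and checking the degeneracy condition are routine. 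I would remark that all of this is parallel to \cite[Lemma~3.x]{CLLL} in the $N=1$ case, the only new feature being the $N$ extra sections $\mu_j$, which behave exactly like the $\varphi_i$ for the purposes of base-point removal, so the argument goes through verbatim with $\PP^4$ replaced by $\PP^{4+N}$.
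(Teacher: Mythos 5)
Your reduction is the right one, and it is exactly the paper's: with $\rho\lsta=0$, $\nu\lsta$ is nowhere vanishing, $\sN\lsta\cong\sO$, and $(\varphi\lsta,\mu\lsta)$ is a nowhere vanishing section of $\sL\lsta^{\oplus(5+N)}$, inducing a family of maps $(\Si^{\sC\lsta},\sC\lsta)\to\PP^{4+N}$ with $\sL\lsta$ the pullback of $\sO(1)$. But having made this reduction you do not use it. The paper concludes in one step by invoking properness of the moduli of stable maps to $\PP^{4+N}$ of fixed degree, which produces the limiting pointed curve, the extension of $\sL$, and base-point-free extensions of $(\varphi,\mu)$ all at once; you instead propose to redo this by hand (extend the curve via properness of the moduli of twisted curves, extend the line bundle in the Picard stack, twist along central-fibre components and insert rational bridges to remove base points), and you explicitly leave that twisting/base-point step --- which is precisely the content of the valuative criterion you could otherwise quote --- as an acknowledged ``obstacle'' that is not carried out. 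As written, the proof is therefore incomplete at its main step; the repair is simply to cite properness of $\barM_{g,\ell}(\PP^{4+N},d_0)$, as the paper does. (Also, no contraction via Lemma \ref{stable-cri} is needed: the conclusion asks only for $\xi\in\cW^{\mathrm{pre}-}(S)$, a prestable object.)

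There is also a genuine error in your last parenthetical. Lemma \ref{degenerate-locus} requires $(\varphi=0)\cup(\varphi_1^5+\cdots+\varphi_5^5=\rho=0)=\sC$, where the second set is the \emph{simultaneous} vanishing locus of $\sum_i\varphi_i^5$ and $\rho$; setting $\rho\equiv 0$ does not by itself make this set all of $\sC$ --- you still need $\varphi_1^5+\cdots+\varphi_5^5$ to vanish wherever $\varphi\neq 0$. That the central fibre satisfies this is not ``automatic because $\rho=0$''; it comes from the hypothesis $\xi\lsta\in\cW^{\mathrm{pre}-}(S\lsta)$, i.e. from the vanishing of $\sum_i\varphi_i^5$ on the generic fibres, propagated to the special fibre by closedness of the zero locus of this section of $\sL^{\otimes 5}$ on the (reduced) total space of the extended family. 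So the degeneracy condition in the limit uses the ``$-$'' hypothesis on $\xi\lsta$, not the vanishing of $\rho$, and your argument as stated omits the step where this hypothesis enters.
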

\begin{proof}
Since $\rho\lsta=0$, $\nu$ is nowhere vanishing and $\sN\lsta\cong\sO\lsta$.
Thus $(\varphi\lsta,  \mu\lsta)$ is a nowhere vanishing section of $H^0(\sL\lsta^{\oplus 5}\oplus \sL\lsta^{\oplus N})$,  and  induces a morphism
$(\Si^{\sC\lsta}, \sC\lsta)\to\mathbb P^{4+N}$ such that $(f\lsta)\sta\sO_{\mathbb P^{4+N}}(1)\cong\sL\lsta$.
By the stability assumption of $\xi\lsta$,
this morphism is an $S\lsta$-family of stable maps. Thus the Lemma follows from that the moduli of stable maps
to $\PP^{4+N}$ of fixed degree is proper.
\end{proof}

\begin{lemm}\label{prop-extension3}
Let $\xi_*\in \cW^{\mathrm{pre}-}(\Slsta)$
be such that $\varphi\lsta=\nu\lsta=0$. Then the conclusion of Lemma \ref{prop-extension0} holds.
\end{lemm}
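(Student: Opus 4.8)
The plan is to mimic the strategy of Lemma~\ref{prop-extension0}, but now replacing the role of $\mathbb P^{4+N}$ (which came from $(\varphi,\mu)$) by the data $(\sL,\rho,\mu)$ that survives when $\varphi\lsta=\nu\lsta=0$. Since $\varphi\lsta=\nu\lsta=0$, condition (5) of Definition~\ref{def-curve} forces $\rho\lsta$ to be nowhere vanishing, hence an isomorphism $\rho\lsta:\sL\lsta^{-5}\xrightarrow{\ \sim\ }\omega^{\log}_{\sC\lsta/\Slsta}(-\Si^{\sC\lsta}_{(1,\rho)})$; in particular $\sL\lsta^5\cong\omega^{\log}_{\sC\lsta}$, so $\xi\lsta$ is (a family of) $5$-spin curves with five $p$-fields $\varphi=0$, exactly as in Example~\ref{spinp}. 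Likewise condition (3) forces $(\mu\lsta,\nu\lsta)=(\mu\lsta,0)$ nowhere zero, so $\mu\lsta$ is a nowhere vanishing section of $(\sL\otimes\sN)\lsta^{\oplus N}$, i.e.\ it defines an $\Slsta$-morphism $[\mu\lsta]:\sC\lsta\to\PP^{N-1}$ with $[\mu\lsta]^*\sO_{\PP^{N-1}}(1)\cong(\sL\otimes\sN)\lsta$. This is precisely the map \eqref{nu2} of Example~\ref{spinp}, now restricted to $\Slsta$.

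First I would invoke properness of the two factor moduli on the right of \eqref{nu2}: the moduli $\cM^{1/5,5p}_{g,\gamma}$ of (not necessarily stable) $5$-spin curves with five $p$-fields is proper over the moduli $\cM^{\mathrm{tw}}_{g,\gamma}$ of twisted curves (this is where stability of the domain does \emph{not} enter, matching the fact that we are in $\cW^{\mathrm{pre}-}$ and not yet imposing stability), and $\barM_{g,\gamma}(\PP^{N-1},d_0)$ is proper by Kontsevich. Running the valuative criterion for each factor, after a finite base change $S_a\to S$ the family $\xi\lsta$ — viewed through \eqref{nu2} — extends over the central point: one gets an extended twisted curve $\Si^\sC\subset\sC$, an extended line bundle $\sL$ with $\rho:\sL^{5}\cong\omega^{\log}_\sC$ extending $\rho\lsta$, an extended stable map $[\mu]:\sC\to\PP^{N-1}$, and hence extended $\sN\defeq [\mu]^*\sO(1)\otimes\sL^{-1}$ together with $\mu\in H^0((\sL\otimes\sN)^{\oplus N})$ restricting correctly. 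Setting $\varphi=0$ and $\nu=0$ on all of $\sC$ completes the tuple \eqref{field}. I then check that this $\xi$ lies in $\cW^{\mathrm{pre}-}(S)$: conditions (1)--(5) hold by construction (nowhere-vanishing of $(\mu,\nu)=(\mu,0)$ from stability of $[\mu]$, of $(\varphi,\mu)=(0,\mu)$ likewise, of $(\rho,\nu)=(\rho,0)$ from $\rho$ being an isomorphism), and the degeneracy condition of Lemma~\ref{degenerate-locus}, $(\varphi=0)\cup(\sum\varphi_i^5=\rho=0)=\sC$, holds trivially because $\varphi\equiv0$ on the whole central fibre as well. Thus $\xi\in\cW^{\mathrm{pre}-}(S)$, which is the conclusion of Lemma~\ref{prop-extension0}.

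The one point needing genuine care — and the step I expect to be the main obstacle — is the compatibility of the extensions coming from the two different factors of \eqref{nu2}: the valuative criterion applied to $\cM^{1/5,5p}_{g,\gamma}\times_{\cM^{\mathrm{tw}}_{g,\gamma}}\barM_{g,\gamma}(\PP^{N-1},d_0)$ must be run on the \emph{fibre product}, so one must ensure the twisted curve $\sC$ produced by the spin factor and the one produced by the stable-map factor agree after base change. This is handled by first extending the underlying twisted curve using properness of $\cM^{\mathrm{tw}}_{g,\gamma}$ (which may already require a finite base change $S_a\to S$ to kill automorphisms / make the stabilization exist), and only then extending $\sL$ (with its spin structure $\rho$) and the map $[\mu]$ \emph{relative to this fixed curve} — relative properness of $\cM^{1/5,5p}_{g,\gamma}\to\cM^{\mathrm{tw}}_{g,\gamma}$ and of the relative moduli of maps $\underline{\mathrm{Map}}_{\sC/\Slsta}(\PP^{N-1},d_0)$ then give the two relative extensions over a common base, with no further base change needed. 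A secondary subtlety is that $\sC$ may acquire new rational components in the stabilization, along which $\sL$, $\sL\otimes\sN$ could be non-trivial; but these are exactly the components allowed by Lemma~\ref{stable-cri} to be unstable, and since we only need an object of $\cW^{\mathrm{pre}-}$ (not of $\cW^-$) this causes no problem here. I would make these relative-properness inputs explicit by citing \cite{JKV, AJ, O} for the spin/twisted-curve moduli and the standard reference for relative Kontsevich spaces, exactly parallel to how Lemma~\ref{prop-extension0} cited properness of $\barM_{g,\ell}(\PP^{4+N},d)$.
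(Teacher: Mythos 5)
Your opening reduction is exactly the paper's: with $\varphi_{\ast}=\nu_{\ast}=0$, the field $\rho_{\ast}$ is nowhere vanishing, so $(\sC_{\ast},\Si^{\sC_{\ast}},\sL_{\ast})$ is a family of $5$-spin curves, $\mu_{\ast}$ defines $f_{\ast}\colon\sC_{\ast}\to\PP^{N-1}$ with $f_{\ast}^{\ast}\sO(1)\cong\sL_{\ast}\otimes\sN_{\ast}$, and after extending one sets $\sN=f^{\ast}\sO(1)\otimes\sL^{\vee}$, $\varphi=\nu=0$; the degeneracy condition is then automatic. The genuine gap is in your mechanism for producing the extension of the pair (spin curve, map). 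You propose to first extend the underlying twisted curve ``using properness of $\cM^{\mathrm{tw}}_{g,\gamma}$'', and then to extend $\sL$ (with $\rho$) and $[\mu]$ \emph{relative to this fixed curve}, citing relative properness of $\cM^{1/5,5p}_{g,\gamma}\to\cM^{\mathrm{tw}}_{g,\gamma}$ and of the relative mapping space of degree-$d_0$ maps to $\PP^{N-1}$. All of these properness claims fail: the stack of prestable twisted curves is a non-separated Artin stack, locally of finite type, and is in no sense proper; the space of maps of fixed degree from a \emph{fixed} family of curves to $\PP^{N-1}$ is not proper over the base (limits require bubbling, i.e.\ modifying the central fibre --- that is precisely why one passes to the Kontsevich compactification); and a $5$-spin structure on $\sC_{\ast}$ need not extend over a chosen extension $\sC$ at all, since in the limit one must in general insert new stacky structure at the nodes of the central fibre, i.e.\ change the twisted curve. (The five $p$-fields are harmless only because here $\varphi\equiv 0$; a stack of sections of a line bundle is never proper.) So the decoupled extension, in the order you propose, breaks down exactly at the step you yourself flagged as the delicate one.

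The paper avoids this by invoking a single properness input for the \emph{combined} object: the moduli of stable spin maps of Jarvis--Kimura--Vaintrob (\cite{JKV}, the diagram on p.~519 and Proposition~2.2.3) is proper, so the family $(\sC_{\ast},\Si^{\sC_{\ast}},\sL_{\ast},\rho_{\ast},f_{\ast})$ extends after finite base change, with the degeneration of the domain dictated by the stability of the whole datum and with the stacky structure at new nodes built into the spin factor. If you insist on a two-step argument, the workable order is the reverse of yours: extend the stable map first (this chooses the limit curve), and then extend the spin structure over that family using the properness/finiteness of the spin moduli over the moduli of (twisted) stable curves, where the stacky structure at the new nodes is allowed to change; this is essentially how the properness in \cite{JKV} is established. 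As written, however, your proof rests on false properness statements and does not close.
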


\begin{proof}
Let $\xi\lsta=(\sC\lsta,\Si^{\sC\lsta},\sL\lsta,\cdots)$. Since $\nu_*=0$, $\rho_*$ is nowhere vanishing. Thus $(\sC\lsta, \Si^{\sC\lsta},\sL\lsta )$ is an $S\lsta$-family of spin curves. The section $\mu\lsta$ induces  a morphism
$$f\lsta \colon \sC\lsta\to \mathbb P^{N-1}$$
 such that  $f\lsta^*\sO(1)\cong \sL\lsta\otimes\sN\lsta$. This is exactly an $S_*$-family of stable spin maps studied by Jarvis, Kimura and Vantrob in \cite{JKV}. By the diagram on page 519 and Proposition 2.2.3 in \cite{JKV}, the moduli stack of stable spin maps is proper. Thus we can extend the spin curve $(\sC\lsta, \Si^{\sC\lsta},\sL\lsta )$ over $S\lsta$  to a spin curve $(\sC, \Si^{\sC},\sL )$ over $S$ with a stable map $f\colon \sC\to \mathbb P^{N-1}$ extending the map $f\lsta$. We define $\sN=f^*\sO(1)\otimes \sL^\vee$.

The family $(\sC,\Si^{\sC},\sL,\sN,\varphi=0, \rho,\mu,\nu=0)$ is a desired extension of $\xi\lsta$.
\end{proof}


\begin{lemm}\label{prop-extension1}Let $\xi\lsta\in \cW^{\mathrm{pre}-}(S\lsta)$ 
be such that $\sC_\ast$ is smooth, $\varphi\lsta=0$, $\rho\lsta\ne 0$ and $\nu\lsta\ne0$. 
Then possibly after a finite base change, we can extend $\xi\lsta$ to $\xi\in \cW^{\pre-}(S)$.
\end{lemm}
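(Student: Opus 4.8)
\medskip
\noindent\emph{Plan of proof.}\ The plan is to run the valuative criterion while carrying the normalization $\varphi=0$ through the whole argument, so that membership in $\cW^{\pre-}$ never has to be checked by hand. Since $\varphi\lsta=0$ and $(\varphi,\mu)$ is nowhere zero, the tuple $\mu\lsta$ is nowhere zero and defines a morphism $[\mu\lsta]\colon\sC\lsta\to\PP^{N-1}$ with $(\sL\otimes\sN)|_{\sC\lsta}\cong[\mu\lsta]^*\sO(1)$; and since $\sC\lsta$ is smooth and $\rho\lsta\ne0$, the divisor $E\lsta=\{\rho\lsta=0\}$ is $\Slsta$-relative effective and finite over $\Slsta$, with $\sL\lsta^{\otimes5}\cong\omega^{\log}_{\sC\lsta/\Slsta}(-E\lsta)$. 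The point is that if one produces \emph{any} extension $\xi=(\sC,\Si^\sC,\sL,\sN,\varphi,\rho,\mu,\nu)\in\cW^{\pre}(S)$ of $\xi\lsta$ with $\varphi\equiv0$ on $\sC$, then Lemma \ref{degenerate-locus} places it in $\cW^{\pre-}(S)$ for free; so the whole task reduces to constructing a \emph{stable} family of NMSP fields with vanishing $\varphi$.

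First I would extend the curve and the line bundles. After a finite base change $S_a\to S$, which we rename $S$, the properness of $\cM^{\tw}_{g,\ell}$ recalled above lets me extend the pointed twisted curve $(\Si^{\sC\lsta}\subset\sC\lsta)$ to a family of stable pointed twisted curves $(\Si^\sC\subset\sC)$ over $S$. The total space $\sC$ is then a smooth Deligne--Mumford surface over the smooth curve $S$, so $\mathrm{Pic}(\sC)\to\mathrm{Pic}(\sC\lsta)$ is surjective and $\sL\lsta,\sN\lsta$ extend to invertible sheaves $\sL,\sN$ on $\sC$ with the prescribed monodromy along $\Si^\sC$. Two such extensions differ by a $\ZZ$-linear combination of the components of the central fibre $\sC_{\eta_0}$; twisting by such combinations leaves the fibrewise degrees $\bd$ unchanged (each component meets a general fibre $\sC_t\sim\sC_{\eta_0}$ trivially) and, modulo the trivial twist $\sO(\sC_{\eta_0})\cong\sO$ valid after shrinking $S$, allows adding or removing any proper sub-divisor of $\sC_{\eta_0}$ from the zero or polar part of a section.

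Next I would extend $\mu\lsta,\nu\lsta,\rho\lsta$ to rational sections of $\sL\otimes\sN$, $\sN$ and $\sL^{-5}\otimes\omega^{\log}_{\sC/S}$, and choose the central-fibre twists of $\sL$ and $\sN$ so that: (i) $\mu$ becomes a regular, nowhere-vanishing tuple --- if isolated base points on $\sC_{\eta_0}$ persist after clearing the divisorial fixed part, one resolves them by blowing up, equivalently extends $[\mu\lsta]$ inside the proper moduli $\barM_{g,\ell}(\PP^{N-1},d_0)$, possibly replacing $\sC$ by a further modification of its central fibre (needed only when $d_0>0$); and (ii) $\nu,\rho$ become regular with $(\rho,\nu)$ nowhere vanishing. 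Once $\mu$ is nowhere vanishing, $(\varphi,\mu)=(0,\mu)$ and $(\mu,\nu)$ are automatically nowhere zero, so $\xi=(\sC,\Si^\sC,\sL,\sN,0,\rho,\mu,\nu)$ is a family of NMSP fields of type $(g,\gamma,\bd)$, hence lies in $\cW^{\pre-}(S)$ by the first paragraph. Finally I would stabilize: by Lemma \ref{stable-cri} any rational component of $\sC_{\eta_0}$ that destabilizes $\xi$ has $\sL\otimes\sN$ trivial on it and is of one of the two listed types, and contracting it preserves $\varphi=0$ and all the nowhere-vanishing conditions; this yields the desired stable $\xi\in\cW^{\pre-}(S)$, i.e.\ an extension lying in $\cW\lggd^-(S)$.

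The main obstacle is part (ii) together with its compatibility with part (i): twisting $\sN$ by a component of $\sC_{\eta_0}$ twists $\sL$ in the opposite direction, so normalizing $\nu\in H^0(\sN)$ and $\rho\in H^0(\sL^{-5}\otimes\omega^{\log}_{\sC/S})$ exert opposite pressure on the twist, and one must pick the componentwise twists so that on every component of $\sC_{\eta_0}$ at least one of $\rho,\nu$ is not identically zero while, pointwise, $\rho$ and $\nu$ never vanish together. This numerical bookkeeping on the central fibre, together with the subsequent verification via Lemma \ref{stable-cri} that no destabilizing component survives, is the technical heart of the argument; it runs parallel to the case analysis in the proofs of the corresponding extension lemmas in \cite{CLLL}.
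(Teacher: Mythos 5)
Your proposal follows essentially the same route as the paper's proof: use $\varphi\equiv 0$ so that Lemma \ref{degenerate-locus} makes membership in the degeneracy locus automatic, extend $[\mu]$ (hence $\sL\otimes\sN$ as $[\mu]^*\sO(1)$) via properness of moduli of (twisted) stable maps to $\PP^{N-1}$ after adding auxiliary markings/base change, and then reduce the extension of $(\rho,\nu)$ to the analysis of \cite{CLLL} \S 3 --- exactly the reduction the paper makes. Two caveats on your intermediate claims: the total space of the extended family need not be smooth (the paper allows a further base change and desingularization before extending $\sL$), and isolated common zeros of $\rho$ and $\nu$ on the central fibre cannot be eliminated by componentwise twists alone --- removing them is the blowup/``basket'' induction of \cite{CLLL} (3.3), which is the step both you and the paper ultimately delegate to that reference.
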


\begin{proof} Since $\varphi\lsta=0$, the section $\mu\lsta$ is nowhere vanishing and defines a (not necessariy stable) morphism
$[\mu\lsta]:\sC\lsta\to \PP^{N-1}$. By adding some auxiliary sections, possibly after a finite base change, we can extend
$\Si^{\sC\lsta}\sub\sC\lsta$ to $\Si^\sC\sub\sC$ over $S$, and extend $[\mu\lsta]$ to a morphism $[\mu]:\sC\to\PP^{n-1}$. In this way,  $\sM\defeq [\mu]\sta\sO(1)$ is an extension of $\sL\lsta\otimes\sN\lsta$  over $\sC$ so that $\mu_{\alpha\ast}\in H^0(\sL\lsta\otimes\sN\lsta)$ extends to $\mu_\alpha\in H^0(\sM)$, and
$[\mu]$ is the map defined by $(\mu_1,\cdots,\mu_N)$. 

Our next task is to extend $(\rho\lsta,\nu\lsta)$. For this we follow the proof in \cite[\S 3]{CLLL}.
Possibly after a further base change and desingularization, we can find an $S$-families of pointed twisted curves
$\Sigma^{\ti\sC}\sub \ti\sC$, a birational morphism $\phi:\ti\sC\to\sC$, such that $\phi$ induces an isomorphism of
$\Sigma^{\ti\sC}$ with $\Sigma^{\sC}$, and extend $\sL\lsta$ to an invertible sheaf $\ti\sL$ on
$\ti\sC$ so that, after letting $\ti\sM=\phi\sta\sM$, $\nu_{\ast}$ extends to a regular section
$\ti\nu\in H^0(\ti\sM\otimes\ti\sL\dual)$, and $\rho\lsta$ extends to a meromorphic section of
$\ti\sL^{-5}\otimes\omega_{\ti\sC/S}^{\log}$, so that a parallel statement of \cite[Proposition 3.5]{CLLL} holds.

After that, we follow the proof in \S  3 of \cite{CLLL}. We form a basket $\cB$ as in \cite[(3.3)]{CLLL},
using $(\rho=0)$ and $(\nu=0)$. Then the proof in \cite[(3.3)]{CLLL} can be line by line copied to this case to show that
we can choose a $\xi'\in \cW^{\pre-}(S)$ extending $\xi\lsta$.
This proves the lemma.
\end{proof}

\begin{lemm}\label{prop-extension2}Let $\xi\lsta\in \cW^{\mathrm{pre}-}(S\lsta)$ be as in Lemma \ref{prop-extension1}.
Then possibly after a base change, we can extend $\xi\lsta$ to a (stable) $\xi\in \cW^{-}(S)$.
\end{lemm}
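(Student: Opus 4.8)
The plan is to upgrade the pre-stable extension $\xi'\in\cW^{\pre-}(S)$ produced by Lemma \ref{prop-extension1} to a \emph{stable} one by contracting the rational components of the central fiber that destabilize it. First I would apply Lemma \ref{prop-extension1} to obtain $\xi'=(\ti\sC,\Sigma^{\ti\sC},\ti\sL,\ti\sN,\varphi=0,\rho,\mu,\nu)\in\cW^{\pre-}(S)$ extending $\xi\lsta$; since $\xi\lsta$ was itself stable (being an object of $\cW^{-}(S\lsta)$), the only possible failure of stability of $\xi'$ occurs along the central fiber $\sC_{\eta_0}$. I would then invoke the stability criterion Lemma \ref{stable-cri}: any destabilizing component $\sE\sub \sC_{\eta_0}$ is a rational curve falling into case (1) or case (2) there, so in particular $\sL\otimes\sN|_\sE\cong\sO_\sE$ and $\mu|_\sE$ is constant, and either $\sL^{\otimes 5}|_\sE\cong\sO_\sE$ with $\sE$ meeting the rest in two points, or $\sE$ meets the rest in one point with $\rho|_\sE$ nowhere zero (the case $\sL|_\sE\cong\sN|_\sE\cong\sO_\sE$ with $\rho|_\sE=0$ cannot happen here since $\nu\lsta\neq 0$ forces $\nu\neq 0$ generically and, after the construction in Lemma \ref{prop-extension1}, $\rho$ and $\nu$ do not simultaneously vanish on a whole component — I would note this and defer the bookkeeping).

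Next I would carry out the contraction. Let $\sC_{\eta_0}^{\mathrm{uns}}$ be the union of the destabilizing rational components; each such $\sE$ is an exceptional curve in the sense that all the sheaves $\sL,\sN$ (hence $\sL\otimes\sN$ and $\sP$) and all the fields are either trivial or constant along it, so one can blow it down. Concretely, I would use the standard fact (as in the construction of stable maps / the MSP moduli in \cite{CLLL}) that there is a birational $S$-morphism $c:\ti\sC\to\sC$ contracting exactly $\sC_{\eta_0}^{\mathrm{uns}}$, with $\sC\to S$ again a family of pointed twisted nodal curves, and that the invertible sheaves $\ti\sL,\ti\sN$ and all the fields $(\varphi=0,\rho,\mu,\nu)$ descend along $c$ because their restrictions to the contracted locus are trivial/constant. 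This is where I would be most careful: I must check that after contraction $\sL\otimes\sN$ and $\sN$ still have the prescribed fiberwise degrees $d_0,d_\infty$ (degrees are preserved since we only contract components on which $\sL\otimes\sN$ and $\sN$ are trivial — note $\sN|_\sE\cong\sO_\sE$ in case (1), and in case (2) with $\rho|_\sE$ nowhere zero one has $\sP|_\sE\cong\sO_\sE$ hence $\sL^{\otimes 5}|_\sE\cong\omega^{\log}_\sC|_\sE$; combined with $\sL\otimes\sN|_\sE\cong\sO_\sE$ this pins down everything), that $(\varphi,\mu)$, $(\rho,\nu)$, $(\mu,\nu)$ remain nowhere zero (automatic off the contracted locus, and on it the contracted point inherits the constant value, which is nonzero), and that the resulting curve is still a twisted curve with the correct monodromy along the markings (the markings are disjoint from $\sC_{\eta_0}^{\mathrm{uns}}$ since a marked component is never destabilizing in the stable range).

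Finally, iterating the contraction finitely many times — each step strictly decreases the number of irreducible components of the central fiber — terminates in a family $\xi\in\cW^{\pre-}(S)$ all of whose fibers are stable, i.e. $\xi\in\cW^{-}(S)$, and $\xi|_{S\lsta}$ differs from $\xi\lsta$ only by the modification over the generic point performed in Lemma \ref{prop-extension1}, which was itself an isomorphism over $S\lsta$. Moreover $\xi$ still lies in $\cW^{-}$ rather than merely $\cW$ because the cosection degeneracy condition of Lemma \ref{degenerate-locus}, namely $(\varphi=0)\cup(\sum\varphi_i^5=\rho=0)=\sC$, is a closed condition satisfied on the open dense $S\lsta$ (as $\varphi\lsta=0$) hence on all of $S$. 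The main obstacle is the contraction step: verifying that the fields genuinely descend and that no degree or nowhere-vanishing condition is violated requires the component-by-component analysis of cases (1) and (2) of Lemma \ref{stable-cri}, which is essentially the same bookkeeping as in \cite[\S 3]{CLLL} and I would cite it there rather than reproduce it.
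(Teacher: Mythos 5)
Your proposal is correct and follows essentially the same route as the paper: take the extension from Lemma \ref{prop-extension1}, use the stability criterion (Lemma \ref{stable-cri}) to see that any destabilizing rational component $\sE$ of the central fiber has $\sL\otimes\sN|_\sE\cong\sO_\sE$, and then blow down these components while descending/modifying the fields, with the bookkeeping delegated to \cite[\S 3.5]{CLLL}. The paper's proof is exactly this, stated more briefly.
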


\begin{proof}
Let $\xi$ be an extension given by Lemma \ref{prop-extension1}. In case $\xi_0$ is not stable,
 by Lemma \ref{stable-cri}, we have those rational curves $\sE\sub\sC_0$ satisfying (1) or (2) of the criterion.
By the same Lemma, we know $\sL\otimes\sN|_{\sE}\cong\sO_\sE$. Then we can apply the arguments in
\cite[\S 3.5]{CLLL} to show that, by blowing down and modifying fields, we can make $\xi$ stable.
\end{proof}

The case where $\sC\lsta$ is not necessarily irreducible can be treated by gluing.

\begin{prop}\label{proper-proof}Let $S\lsta$ be as before, and let $\xi\lsta\in \cW^{\mathrm{pre}-}(S\lsta)$.
Then possibly after a base change, we can extend $\xi\lsta$ to $\xi\in \cW^{-}(S)$.
\end{prop}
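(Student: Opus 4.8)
The plan is to reduce Proposition \ref{proper-proof} to the irreducible case, which is covered by Lemmas \ref{prop-extension0}--\ref{prop-extension2}, by a gluing argument along the nodes of the central fiber. The key point is that properness (more precisely, the existence part of the valuative criterion) is a statement that, for a DM stack locally of finite type, can be checked after a finite base change, and can be assembled from local pieces provided the local extensions agree over the overlaps.

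\smallskip
\emph{Step 1: Normalize the central fiber.} First I would take the family $\xi\lsta = (\sC\lsta,\Si^{\sC\lsta},\sL\lsta,\sN\lsta,\varphi\lsta,\rho\lsta,\mu\lsta,\nu\lsta)$ and, possibly after shrinking $S$, assume $\sC\lsta\to S\lsta$ has a fixed topological type. I would then stratify $\sC\lsta$ according to which of the conditions ``$\rho\lsta=0$ on this component'', ``$\varphi\lsta=\nu\lsta=0$ on this component'', or ``$\varphi\lsta=0,\ \rho\lsta\ne0,\ \nu\lsta\ne0$'' holds on each irreducible component (using $\xi\lsta\in \cW^{\mathrm{pre}-}(S\lsta)$, Lemma \ref{degenerate-locus} tells us $\varphi\lsta=0$ or $\varphi_1^5+\cdots+\varphi_5^5=\rho\lsta=0$ identically, so after a further stratification every component falls into one of the listed cases up to reordering the $\varphi_i$ and the analogous degeneracy; in particular the only genuinely new case beyond those in the irreducible lemmas is $\varphi\lsta\ne 0$, which forces $\rho\lsta=\varphi_1^5+\cdots+\varphi_5^5=0$ and is handled exactly as Lemma \ref{prop-extension0}/\ref{prop-extension1} after a coordinate change). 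Then I separate $\sC\lsta$ into its irreducible components $\sC\lsta^{(a)}$, each an $S\lsta$-family of (possibly nodal, but after further normalization smooth) curves, remembering the marked points of $\Si^{\sC\lsta}$ together with the preimages of the nodes of $\sC\lsta$ as extra markings, and restricting all the sheaves and fields.

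\smallskip
\emph{Step 2: Extend each component.} For each component family $\xi\lsta^{(a)}$, after a common finite base change $S_b\to S$ (take $b$ a common multiple of the ramification indices needed by the various components), I apply the appropriate lemma among Lemmas \ref{prop-extension0}, \ref{prop-extension3}, \ref{prop-extension2} to get an extension $\xi^{(a)}\in\cW^{\mathrm{pre}-}(S_b)$ — or rather a pre-stable extension, since I must not stabilize yet. The extended central fibers $\sC_0^{(a)}$ carry extensions of the node-markings, and at each would-be node the two branches' markings are banded by the same cyclic group (this is forced because the monodromy datum of a twisted curve at a node is intrinsic and is preserved under the extensions, which are canonical enough — this is the standard gluing input for twisted curves, cf. the properness of $\cM^{\mathrm{tw}}_{g,\ell}$).

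\smallskip
\emph{Step 3: Glue and stabilize.} I would glue the $\sC^{(a)}$ along the marked points coming from the nodes, using the twisted-curve gluing formalism, to obtain $\Si^\sC\sub\sC$ over $S_b$; the invertible sheaves $\sL, \sN$ glue because the transition data along the nodes is inherited from $\xi\lsta$ (here I may need one more finite base change to arrange that the gluing isomorphisms of the line bundles, which are determined up to a root of unity by the twisted structure, can be made to match — this is routine for twisted curves). The fields $\varphi,\rho,\mu,\nu$ glue since they agree over $S_b\lsta$ and each is a section of a sheaf that now glues; the nowhere-vanishing conditions (3)--(5) of Definition \ref{def-curve} hold in the central fiber because they hold in a neighborhood of it on each component and the conditions are closed. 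This produces $\xi'\in\cW^{\mathrm{pre}-}(S_b)$ extending $\xi\lsta$ (pulled back to $S_b\lsta$). Finally, in case $\xi'_0$ is not stable, I invoke Lemma \ref{stable-cri} to locate the offending rational components $\sE$, note that all of them satisfy $\sL\otimes\sN|_\sE\cong\sO_\sE$, and apply the blow-down-and-modify-fields procedure of \cite[\S3.5]{CLLL} exactly as in the proof of Lemma \ref{prop-extension2} to arrive at a stable $\xi\in\cW^-(S_b)$.

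\smallskip
\emph{Main obstacle.} The serious work is Step 3, the gluing: one must check that the local extensions of the line bundles $\sL$ and $\sN$, together with the balanced twisted structure at each node, can be glued into a global representable invertible sheaf after a controlled finite base change, and that this can be done compatibly with the field sections. The delicate point is that an extension produced by, say, Lemma \ref{prop-extension2} is only canonical up to the birational modifications used in its proof, so one must either track these modifications carefully near the nodes or, more cleanly, first perform the component-wise extensions keeping the node-markings rigid and only then observe that the resulting central curves are automatically compatible because the node data was never touched. Since the one-component cases are already established and the twisted-curve gluing is standard (this is, after all, how $\cM^{\mathrm{tw}}_{g,\ell}$ is shown proper), I expect this to go through, but it is where all the bookkeeping lives; I would keep the write-up short by citing \cite{AJ, O} for the twisted-curve gluing and \cite[\S3]{CLLL} for the field modifications.
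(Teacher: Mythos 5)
Your proposal is correct and is essentially the paper's own argument: the paper proves this proposition by the one-line remark that the case of non-irreducible $\sC\lsta$ ``can be treated by gluing'' together with a citation of \cite[\S 3]{CLLL}, which is exactly your scheme of normalizing the generic fiber, extending each component via Lemmas \ref{prop-extension0}--\ref{prop-extension2}, gluing back along the extended node sections, and then stabilizing via Lemma \ref{stable-cri} and the blow-down procedure of \cite[\S 3.5]{CLLL}. The bookkeeping you flag (assigning narrow types to the node-markings so the componentwise lemmas apply, and matching the line-bundle gluing data over $S$) is precisely what the cited argument of \cite{CLLL} carries out, so nothing essential is missing beyond what the paper itself defers to that reference.
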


\begin{proof}
The proof is the same as that in \cite[{ \S 3}]{CLLL}.
\end{proof}

\subsection{Proof of the properness}\label{separatedness}

We continue to abbreviate $\cW=\cW\lggd$. We first prove that $\cW$ is separated.
As before, $\eta_0\in S$ is a closed point in a smooth curve over $\CC$,  and $S\lsta=S-\eta_0$.


\begin{lemm}\label{valuative2}
Let $\xi$, $\xi'\in \cW(S)$ be such that $\xi\lsta\cong \xi'\lsta\in \cW(S\lsta)$.
Then $\xi\cong\xi'$.
\end{lemm}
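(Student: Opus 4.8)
The plan is to prove separatedness of $\cW\lggd$ by the valuative criterion, reducing to the statement of Lemma \ref{valuative2}: two families $\xi,\xi'\in\cW(S)$ agreeing over the punctured curve $S\lsta$ must be isomorphic over all of $S$. First I would reduce to the case where the generic isomorphism $\xi\lsta\cong\xi'\lsta$ is the \emph{identity}, i.e.\ both families share a common restriction over $S\lsta$; this is harmless since we are free to compose with the given generic isomorphism. The strategy is then to separate the problem into: (a) the underlying twisted curves with their markings and line bundles, i.e.\ the image under the forgetful morphism $q:\cW\to\cD$, and (b) the fields $(\varphi,\rho,\mu,\nu)$.

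For step (a), I would invoke that $\cD$ is a smooth Artin stack and, more to the point, that it is built from the stack $\cM^{\tw}_{g,\ell}$ of stable twisted pointed curves (which is separated, being a proper-over-its-components DM stack, cf.\ \cite{AJ,O}) together with relative Picard data. The standard fact that the relative Picard functor of a proper flat family is separated (combined with the valuative criterion applied to line bundles on a regular two-dimensional total space) shows that the data $(\sC,\Si^\sC,\sL,\sN)$ in $\xi$ and $\xi'$ agree after possibly a base change; since we are checking separatedness we do not even need base change, but the argument goes through a normalization/regularity input on the total space. Concretely: the coarse curves agree by separatedness of $\barM_{g,\ell}$, the twisted structures agree by separatedness of $\cM^{\tw}_{g,\ell}$, and then $\sL,\sN$ being invertible sheaves on the regular total space $\sC$ that agree over the dense open $\sC\lsta$ must agree on $\sC$ (a line bundle is determined by its restriction to a dense open of a normal scheme, up to the divisor supported on the special fiber, which is then pinned down by the fixed fiberwise degrees $d_0,d_\infty$).

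For step (b), once the curve-and-bundle data of $\xi$ and $\xi'$ are identified, the fields $\varphi,\rho,\mu,\nu$ are global sections of the \emph{same} coherent sheaves on $\sC$, agreeing on the dense open $\sC\lsta$. Since $\sC$ is reduced (even a nodal twisted curve, hence $S_1$ with no embedded points) and $\sC\lsta\sub\sC$ is dense, a section of a locally free sheaf is determined by its restriction to $\sC\lsta$; hence $\varphi=\varphi'$, etc. This identifies $\xi$ with $\xi'$ as NMSP fields. The main obstacle — and the only place requiring genuine care — is step (a): making precise that the line bundles $\sL$ and $\sN$ extend uniquely, which rests on the total space $\sC$ being regular in codimension one along the special fiber and on using the fixed fiberwise degrees to rule out twisting by vertical divisors; this is exactly the subtle point already handled in \cite[\S 3]{CLLL} for MSP fields, and I expect the proof here to reduce to citing that argument. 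A secondary technical point is that the isomorphism of the underlying data must be shown to be \emph{unique} (not just to exist) so that it actually glues to an isomorphism of the NMSP fields preserving the fields; this again follows from separatedness of $\cM^{\tw}_{g,\ell}$ together with the rigidity of sections over a dense open.
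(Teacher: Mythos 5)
Your step (b) is fine, but step (a) contains the essential gap, and it is precisely the point where the real work of the lemma lies. The underlying twisted curve of a \emph{stable NMSP field} is not a stable (or even semistable) pointed curve: stability here means only that $\Aut(\xi)$ is finite, and Lemma \ref{stable-cri} shows the curve may well carry rational components with one or two special points that are rigidified only by the fields. Consequently you cannot invoke separatedness of $\barM_{g,\ell}$ or of $\cM^{\tw}_{g,\ell}$ to conclude that the two extensions of the curve agree; the relevant forgetful target is the stack $\cD$ of \emph{prestable} twisted curves with two line bundles, which is a highly non-separated Artin stack. A priori the central fibers $\sC_0$ and $\sC'_0$ can have different dual graphs (one obtained from the other by inserting or contracting such rational components), so there is nothing to which your ``Picard separatedness'' step can even be applied before this is ruled out. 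Moreover, even on a common curve your degree argument does not pin down the bundles: twisting $\sL$ or $\sN$ by a vertical divisor supported on a \emph{proper} subset of the components of the reducible central fiber preserves the total fiberwise degrees $d_0,d_\infty$ while changing the sheaf, so ``fixed fiberwise degree'' alone rules out nothing when $\sC_0$ is reducible.

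The paper's proof (following \cite[\S 4.1]{CLLL}) addresses exactly this: one passes to a common modification $\ti\sC$ dominating both extensions, writes the pullbacks of the line bundles as differing by divisors supported on exceptional/central components, and then uses the nowhere-vanishing conditions on $(\varphi,\mu)$, $(\mu,\nu)$, $(\rho,\nu)$ together with stability of the central NMSP fields to force these discrepancy divisors to vanish (the $a_k=k$ analysis in Sublemma 4.3 of \cite{CLLL}). The only genuinely new point in the NMSP setting, which the paper spells out, is that $\mu=(\mu_1,\dots,\mu_N)$ nowhere vanishing on a component $\sD'$ no longer gives $\sL\otimes\sN|_{\sD'}\cong\sO_{\sD'}$ directly, but only a morphism $[\mu]|_{\sD'}:\sD'\to\PP^{N-1}$; one must argue this morphism is constant (using the matching of $\mu$ away from the modified locus) and then derive instability of $\xi'_0$ to get a contradiction. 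Your proposal, by deferring to separatedness of moduli of stable curves and a Picard-functor argument, skips the heart of the proof and, as written, rests on statements that are false in this setting. (Your citation of \cite[\S 3]{CLLL} is also to the wrong part of that paper: \S 3 there is the existence/properness argument, while the uniqueness statement you need is \S 4.1.)
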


\begin{proof}Suppose $\sC\lsta$ is smooth.
The proof of the valuative criterion for separatedness in \cite[\S  4.1]{CLLL} line by line can be adopted to this
case, after we replace 
$\nu_1$ (resp. $\nu_2$) by $(\mu_1,\cdots,\mu_N)$ (resp. $\nu$).

Note that in \cite{CLLL}, whenever we know that $\sD\sub\sC$ is an irreducible component satisfying that $\nu_1|_\sD$
is nowhere vanishing, we conclude that $\sL\otimes\sN|_{\sD}\cong\sO_\sD$. This is no longer true, as assuming
$(\mu_1,\cdots,\mu_N)|_{\sD}$
is nowhere vanishing only provides us a morphism $[\mu]|_\sD:\sD\to\PP^{N-1}$, not necessarily constant.
In \cite[\S 4.1]{CLLL},   the fact that {\sl $\nu_1|_\sD$  nowhere
vanishing implies $\sL\otimes\sN|_{\sD}\cong\sO_\sD$} is used only at one place.  It is in the fifth paragraph of Sublamme 4.3.

To adopt to our case, we can modify the argument as follows. Firstly, we adopt the same notation for $\sD'$ and $D_k$ used in
Lemma 4.1 of \cite{CLLL}.
Using $\bar f\sta \rho'|_{D_k}=0$, 
we have $\rho'|_{\cD'}=\varphi'|_{\sD'}=0$, thus $(\mu_j'|_{\sD'})_{j=1}^N$ and $v'|_{\sD'}$ are
nowhere vanishing. Since $a_k+b_k=0$,
$\bar f\sta \mu'|_{\sD'-q'}=\bar\pi\sta (\mu|_x)|_{\sD'-q'}$. (Compare (4.4) in \cite{CLLL}.)
Thus $(\mu_j'|_{\sD'})_{j=1}^N$ defines a constant morphism $\sD'\to\PP^{N-1}$.
Because $\sD'$ contains one node and at most one marking of $\sC'_0$,
$\xi_0'$ becomes unstable, a contradiction.
This proves that $a_k=k$.

With this slight modification, the argument in \cite[\S  4.1]{CLLL} can be adopted here to prove this lemma,
assuming that $\sC\lsta$ is smooth.

Adopting the proof of \cite[Proposition 4.4]{CLLL}, one proves that 
the lemma holds without assuming that $\sC\lsta$ is smooth. This proves the lemma.
\end{proof}

\begin{prop} The stack $\cW$ is separated.
\end{prop}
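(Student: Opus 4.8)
The plan is to prove separatedness via the valuative criterion, which for a DM stack locally of finite type reduces exactly to the statement of Lemma \ref{valuative2}: given a smooth affine curve $S$ with closed point $\eta_0$ and complement $S\lsta$, any two objects $\xi,\xi'\in\cW(S)$ that become isomorphic over $S\lsta$ are isomorphic over all of $S$. Since $\cW=\cW\lggd$ is already known to be a DM $G$-stack locally of finite type (the theorem following Example \ref{spinp}), the only missing ingredient for separatedness is precisely the uniqueness-of-extensions statement, and this is what Lemma \ref{valuative2} supplies. So the proof of the proposition is essentially a one-line invocation: apply the valuative criterion for separatedness together with Lemma \ref{valuative2}.

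Concretely, first I would recall that separatedness of a DM stack of finite type over $\CC$ can be checked on discrete valuation rings (equivalently, on germs of smooth curves), by the standard valuative criterion. Then, given $\xi,\xi'\in\cW(S)$ and an isomorphism $\psi\lsta\colon\xi\lsta\xrightarrow{\sim}\xi'\lsta$ over $S\lsta$, Lemma \ref{valuative2} produces an isomorphism $\xi\cong\xi'$ over $S$. One small point worth addressing is that the valuative criterion really wants the extended isomorphism to restrict to the given $\psi\lsta$ on the generic fiber; but Lemma \ref{valuative2} is proved by extending the given generic isomorphism, so this compatibility is automatic (alternatively, since the diagonal of a DM stack is already separated and unramified, any two extensions of $\psi\lsta$ agree). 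Hence the diagonal $\Delta\colon\cW\to\cW\times\cW$ is proper, i.e. $\cW$ is separated.

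The main (and really only) obstacle has already been dispatched in Lemma \ref{valuative2}: the subtlety there was that, unlike in \cite{CLLL}, an irreducible component $\sD$ on which $(\mu_1,\dots,\mu_N)$ is nowhere vanishing need not have $\sL\otimes\sN|_\sD\cong\sO_\sD$ — it only acquires a morphism $[\mu]|_\sD\colon\sD\to\PP^{N-1}$, which is constant only after invoking stability (a component meeting at most one node and one marking cannot support a nonconstant map). Once that modification to Sublemma~4.3 of \cite{CLLL} is in place, the rest of the argument transports verbatim. Therefore the proof of the proposition is:

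\begin{proof}
Since $\cW$ is a DM stack locally of finite type, separatedness may be checked by the valuative criterion over germs of smooth curves. Let $S$ be a smooth affine curve with closed point $\eta_0$ and $S\lsta=S-\eta_0$, let $\xi,\xi'\in\cW(S)$, and suppose given an isomorphism $\xi\lsta\cong\xi'\lsta$ in $\cW(S\lsta)$. By Lemma \ref{valuative2} this isomorphism extends to an isomorphism $\xi\cong\xi'$ over $S$; the extension restricts to the given one on $S\lsta$ because it is constructed by extending it, and in any case two such extensions coincide since the diagonal of $\cW$ is unramified. Hence the valuative criterion for separatedness holds and $\cW$ is separated.
\end{proof}
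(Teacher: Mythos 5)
Your proposal is correct and follows essentially the same route as the paper: reduce separatedness to the valuative criterion (the paper does this by first passing to finite-type open substacks, since $\cW$ is only locally of finite type) and then invoke Lemma \ref{valuative2} for uniqueness of extensions. No substantive difference.
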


\begin{proof} The proof is identical to that of \cite[{\S  4}]{CLLL}.
Because $\cW$ is locally of finite type, to prove that it is separated we only need to show that any finite type open substack
$\cW_0\sub\cW$ is separated.
Then because $\cW_0$ is defined over $\CC$ and is of finite type, to show that it is
separated we only need to verify the statement in Lemma \ref{valuative2}.  By Lemma \ref{valuative2},
$\cW_0$ is separated, therefore $\cW$ is separated.
\end{proof}

\begin{prop}\label{finite}
The stack $\cW^{-}$ is of finite type.
\end{prop}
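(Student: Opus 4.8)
Since $\cW$ is locally of finite type and $\cW^{-}\subset\cW$ is a closed substack, it suffices to show that $\cW^{-}$ is quasi-compact, and the plan is to reduce this to a boundedness statement proved componentwise, in the spirit of \cite[\S3]{CLLL}. Concretely, we will show there is a constant $C=C(g,\gamma,\bd)$ such that every $\xi=(\sC,\Si^\sC,\sL,\sN,\varphi,\rho,\mu,\nu)\in\cW^{-}(\CC)$ has at most $C$ irreducible components and at most $C$ nodes, has all twisting orders of $(\sC,\Si^\sC)$ at most $C$, and has $\deg(\sL|_\sE)$ and $\deg(\sN|_\sE)$ bounded in absolute value by $C$ for every irreducible component $\sE\subset\sC$. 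Granting this, the underlying curve-and-sheaf data of the objects of $\cW^{-}$ lie in finitely many strata, so the forgetful morphism $q\colon\cW^{-}\to\cD$ factors through a finite-type substack $\cD_0\subset\cD$; since over $\cD_0$ the fields $(\varphi,\rho,\mu,\nu)$ are sections of the push-forward of a fixed coherent sheaf, the fibre product $\cW\times_\cD\cD_0$ is of finite type and contains $\cW^{-}$ as a closed substack, whence $\cW^{-}$ is of finite type. (This last step is the standard argument, identical to \cite[\S3]{CLLL}.)

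To establish the boundedness, fix $\xi\in\cW^{-}(\CC)$ and an irreducible component $\sE\subset\sC$; set $k_\sE=\#\big(\sE\cap(\Si^\sC\cup\sC_{\mathrm{sing}})\big)$, let $g_\sE$ be its genus, so $\deg(\omega^{\log}_{\sC}|_\sE)=2g_\sE-2+k_\sE$, and put $a_\sE=\deg(\sL|_\sE)$, $b_\sE=\deg(\sN|_\sE)$. The three nowhere-vanishing conditions of Definition \ref{def-curve}, read on $\sE$, yield one-sided degree estimates: from $(\varphi,\mu)$ one gets either $\sL(-\Si^\sC_{(1,\varphi)})|_\sE$ effective (so $a_\sE\ge 0$) or $\mu|_\sE$ nowhere zero, defining $[\mu]\colon\sE\to\PP^{N-1}$ (so $a_\sE+b_\sE\ge 0$); from $(\mu,\nu)$ one gets $b_\sE\ge 0$ or $a_\sE+b_\sE\ge 0$; and from $(\rho,\nu)$ one gets either $\sL^{-5}\otimes\omega^{\log}_{\sC}(-\Si^\sC_{(1,\rho)})|_\sE$ effective (so $5a_\sE\le 2g_\sE-2+k_\sE$) or $\nu|_\sE$ nowhere zero (so $\sN|_\sE\cong\sO_\sE$, $b_\sE=0$). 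Feeding in the degeneracy description of Lemma \ref{degenerate-locus} — on $\sE$ either $\varphi|_\sE=0$, or $(\varphi_1^5+\cdots+\varphi_5^5)|_\sE=\rho|_\sE=0$ — one sees that on every component $a_\sE$ is bounded above: if $\varphi|_\sE\neq 0$ then $\rho|_\sE=0$, $\sN|_\sE\cong\sO_\sE$, and $(\varphi,\mu)$ exhibits $\sE$ as a (not yet bounded) map-component to the Fermat quintic; if $\varphi|_\sE=0$ and $\rho|_\sE\neq 0$ then $5a_\sE\le 2g_\sE-2+k_\sE$; and if $\varphi|_\sE=\rho|_\sE=0$ then $b_\sE=0$ and $a_\sE\ge 0$. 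In all cases $a_\sE+b_\sE\ge 0$, so $b_\sE\ge -a_\sE$.

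It remains to bound the combinatorics and close the loop. By the stability criterion, Lemma \ref{stable-cri}, any rational component $\sE$ with $k_\sE\le 2$ must have $\sL^{5}|_\sE\not\cong\sO_\sE$ or $\sL\otimes\sN|_\sE\not\cong\sO_\sE$; together with the effectivity from the previous paragraph, this makes one of $5a_\sE$, $a_\sE+b_\sE$ a strictly positive rational with bounded denominator. Since $\sum_\sE a_\sE=d_0-d_\infty$ and $\sum_\sE b_\sE=d_\infty$ are fixed, this forces a bound on the number of such components, hence — via a connectedness and arithmetic-genus count, using $\sum_\sE k_\sE=2\#\{\text{nodes}\}+\ell$ — on the number of all components and nodes and on the twisting orders, all in terms of $g$, $\gamma$, $\bd$. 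Finally, with the number of components now bounded and each $a_\sE$, $b_\sE$ bounded below by a bounded amount, the fixed totals $\sum_\sE a_\sE$ and $\sum_\sE b_\sE$ upgrade these to two-sided bounds. This proves the boundedness, and with it the proposition; the case analysis in each step is the one of \cite[\S3]{CLLL}, with $\nu_1$ replaced by $(\mu_1,\dots,\mu_N)$ and $\nu_2$ by $\nu$.

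The main obstacle is exactly this last stage: the field conditions only ever produce one-sided degree bounds on a single component, so ruling out arbitrarily long chains of components carrying (nearly) trivial line bundles, and converting the one-sided bounds into simultaneous two-sided multidegree bounds, requires delicately interlocking the nowhere-vanishing conditions of Definition \ref{def-curve}, the degeneracy description of Lemma \ref{degenerate-locus}, the stability criterion of Lemma \ref{stable-cri}, and the fixed total degrees $d_0,d_\infty$. Everything else — the reduction to boundedness and the passage from boundedness to finite type — is formal.
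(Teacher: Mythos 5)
Your plan takes a genuinely different route from the paper, and the route has a hole exactly at its load-bearing step. The paper never attempts a direct boundedness argument for all of $\cW^-$: it proves finiteness of the set of dual graphs only for \emph{$G$-fixed} fields (Lemma \ref{graph3}, via the auxiliary-leg construction and the estimate bounding $\sum_v\delta(v)$ by $10d_\infty+4g+2\ell$), concludes that $(\cW^-)^G$ is of finite type (Lemma \ref{bd}), and then uses the torus action: the locus $\cZ=\{\xi\in\cW^-(\CC)\mid \lim_{t\to 0}t\cdot\xi\in(\cW^-)^G\}$ is of finite type because the fixed locus is, and the extension result Proposition \ref{proper-proof} forces $\cZ=\cW^-$. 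The whole point of passing through the fixed locus is to avoid having to bound the combinatorics of an arbitrary stable field, which is what you set out to do directly.

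The gap in your version is the step you yourself flag as ``the main obstacle'': bounding the number of components and nodes. The one concrete mechanism you propose for it is incorrect. Lemma \ref{stable-cri} only rules out a two-special-point rational $\sE$ with \emph{both} $\sL^{\otimes 5}|_\sE\cong\sO_\sE$ and $\sL\otimes\sN|_\sE\cong\sO_\sE$; it does not make one of $5a_\sE$, $a_\sE+b_\sE$ strictly positive. For instance, a stable configuration can contain a rational component with two special points on which $\varphi|_\sE=0$ and $\rho|_\sE$ is nowhere vanishing, with $\deg\sL|_\sE=-1/5$ and $\deg(\sL\otimes\sN)|_\sE=0$ (these are exactly the $E_{1\infty}$-type components appearing in the fixed loci, cf.\ Remark \ref{remark1}); there both quantities are $\le 0$, so such components contribute nothing positive to the fixed totals $d_0-d_\infty$ and $d_0$, and your counting argument cannot bound their number. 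More generally, on the part of $\sC$ where $\rho$ is nowhere vanishing one has $5\deg\sL=\deg\omega^{\log}_\sC-\#(\Si^\sC_{(1,\rho)}\cap\cdot)$, so the degree budget grows with the very number of nodes you are trying to bound, and your lower bound $b_\sE\ge -a_\sE$ degrades accordingly; breaking this circularity is precisely the content of the Euler-characteristic estimate over the connected components of $\sC_\infty$ in the proof of Lemma \ref{graph3}, which your sketch neither reproduces nor replaces (and would have to be redone for non-fixed fields, where the clean level decomposition used there is unavailable). As written, then, the proposal is a plan with an unresolved crux rather than a proof; the quickest repair is to follow the paper: prove boundedness only for $(\cW^-)^G$ and then invoke the $G$-flow together with Proposition \ref{proper-proof}.
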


This proposition will be proved in Subsection \ref{bound}.

\begin{prop}
The stack $\cW^{-}$ is proper.
\end{prop}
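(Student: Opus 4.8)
The plan is to prove properness via the valuative criterion, which is the standard route for moduli of curves-with-fields and is the one already used in \cite{CLLL}. Since we have just shown $\cW^-$ is separated (previous proposition) and of finite type (Proposition \ref{finite}, proved in \S\ref{bound}), it remains only to verify existence in the valuative criterion: given a smooth affine curve $S$ with closed point $\eta_0$ and $S\lsta=S-\eta_0$, and given any $\xi\lsta\in\cW^-(S\lsta)$, we must produce, possibly after a finite base change $S_a\to S$, an extension $\xi\in\cW^-(S)$. But this is precisely the content of Proposition \ref{proper-proof}. So the body of the proof is essentially an assembly: combine separatedness, finite type (Proposition \ref{finite}), and the valuative existence statement (Proposition \ref{proper-proof}), and invoke the valuative criterion for properness for finite-type separated Deligne--Mumford stacks.

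First I would reduce to a finite-type open substack. Because $\cW^-$ is locally of finite type and, by Proposition \ref{finite}, actually of finite type, and because properness is checked on each connected (hence finite type) component, it suffices to check the valuative criterion on a finite-type open substack $\cW_0^-\subset\cW^-$; here one does \emph{not} even need to pass to a finite-type piece since $\cW^-$ itself is finite type. Then: separatedness gives uniqueness of extensions (Lemma \ref{valuative2} handles the key uniqueness input, and the separatedness proposition packages it); Proposition \ref{proper-proof} gives existence of an extension after a finite base change. For a Deligne--Mumford stack of finite type over $\CC$, the valuative criterion with finite base changes allowed (equivalently, testing against complete DVRs with residue field $\CC$ and their finite extensions) is equivalent to properness. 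This yields that $\cW^-$ is proper.

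One point worth spelling out in the write-up is the cycle-theoretic consequence: once $\cW^-$ is proper, the cosection-localized virtual class $[\cW\lggd]\virt\in A^G_\delta(\cW\lggd^-)$ of \S\ref{sub2.3} lives on a proper stack, so it can be pushed forward and integrated, which is what is used throughout the rest of the paper. But this is a remark, not part of the proof of the proposition itself.

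The main obstacle is not in this proof — it is entirely in the two cited ingredients. Properness here is a corollary of (i) Proposition \ref{finite} (finite type of $\cW^-$), whose proof is deferred to \S\ref{bound} and requires genuine boundedness estimates on the degrees of the invertible sheaves $\sL,\sN$ over components of the domain curve, controlled using the degeneracy-locus condition of Lemma \ref{degenerate-locus}; and (ii) Proposition \ref{proper-proof}, whose proof assembles the extension Lemmas \ref{prop-extension0}--\ref{prop-extension2} (stable-map properness for the $\rho\lsta=0$ case, stable-spin-map properness of Jarvis--Kimura--Vistoli \cite{JKV} for the $\varphi\lsta=\nu\lsta=0$ case, and the basket/blow-up argument of \cite[\S3]{CLLL} for the generic case $\varphi\lsta=0$, $\rho\lsta\ne0$, $\nu\lsta\ne0$), together with gluing along nodes. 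So here is the proof:

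\begin{proof}
Since $\cW^-=\cW\lggd^-$ is of finite type over $\CC$ by Proposition \ref{finite}, and is a separated Deligne--Mumford stack by the previous proposition, it suffices to verify the existence part of the valuative criterion of properness, allowing finite base changes. Let $S$ be a smooth affine curve over $\CC$ with a closed point $\eta_0\in S$, and set $S\lsta=S-\eta_0$. Let $\xi\lsta\in\cW^-(S\lsta)$. By Proposition \ref{proper-proof}, possibly after a finite base change $S_a\to S$ ramified over $\eta_0$, the family $\xi\lsta$ extends to some $\xi\in\cW^-(S_a)$. Uniqueness of such an extension (up to isomorphism) follows from separatedness, i.e.\ from Lemma \ref{valuative2}. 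Since $\cW^-$ is a finite-type separated Deligne--Mumford stack over $\CC$ satisfying the valuative criterion with respect to smooth curves and their finite covers, it is proper.
\end{proof}
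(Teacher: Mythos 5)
Your proposal is correct and follows essentially the same route as the paper: the paper's proof is exactly the assembly of Proposition \ref{finite} (finite type), separatedness, and the valuative existence statement of Proposition \ref{proper-proof}, with the reduction to the valuative criterion cited from \cite[\S 4]{CLLL}. Your write-up just makes the citation to the standard valuative criterion for finite-type separated DM stacks explicit, which matches the paper's intent.
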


\begin{proof}
Assuming Proposition \ref{finite}, as argued in \cite[{\S  4}]{CLLL},  we only need to prove the statement in
Proposition \ref{proper-proof}. By the same proposition, we prove that $\cW^-$ is proper.
\end{proof}

\section{${G}$-fixed loci}
\label{Section4}

We fix a narrow $(g,\gamma,\bd)$ and an integer $N$. Let $G=(\CC\sta)^N$  act on $\cW$ via
\eqref{Gm}. 
We will characterize the fixed loci $\cW^\fix$.
As usual, a typical element $\xi\in\cW$ is 
\beq\label{nmsp}
\xi=(\sC,\Si^\sC,\sL,\sN,\varphi,\rho,\mu,\nu),\quad \varphi=(\varphi_i)_{i=1}^5,\ \mu=(\mu_i)_{i=1}^N.
\eeq
We will use $\rho=1$ to mean $\rho$ is nowhere vanishing, and the same applies to other sections.

\subsection{Associated graphs}
We associate a decorated graph to each $\xi\in \fixW$.
Recall that a graph $\Theta$ is determined by its vertices $V$, edges
$E$, legs $L$, and its flags
$F=\{(e,v)\in E \times V: v\ \text{incident to}\ e\}$. Our convention is that we will use $V(\ga)$ to emphasize the dependence of
$V$ on $\Theta$. If $\Theta$ is understood, we will use $V$ for simplicity.

Let $\xi\in\cW^\fix$ be of the form \eqref{nmsp}. Then ${G}$ has an induced action on $\sC$. We let $\sC\ufix\sub\sC$ be the closed subset fixed by ${G}$,
and let $\sC\umv=\sC-\sC\ufix$. Then  $\sC\ufix$ is proper and not
necessarily  of pure dimension one;  $\sC\umv$ is of pure dimension one with each
connected component smooth and { not proper}.


\begin{defi} \label{Ca}
Let $\xi\in\cW^\fix$. Let $\sC_0=\sC\cap (\mu=0)_\redd$,  $\sC_\infty=\sC\cap (\nu=0)_\redd$,
$\sC_1=\sC\cap (\rho=\varphi=0)_\redd$; let
 $\sC_{01}$ (resp. $\sC_{1\infty}$) be the union of irreducible components of
$\overline{\sC-\sC_0\cup\sC_1\cup\sC_\infty}$ in
$(\rho=0)$ (resp. in $(\varphi=0)$), and  $\sC_{0\infty}$ be the union of irreducible components of
$\sC$ not contained in $\sC_0\cup\sC_1\cup\sC_\infty\cup\sC_{01}\cup\sC_{1 \infty}$.
\end{defi}

We let $\Lambda=\{\infty,1,0\}$. Geometrically,  $\sC_{aa'}$ consists of curves connecting $\sC_a$ and $\sC_{a'}$,  and $\sC_a$
is the part of base curve  having level $a\in \Lambda$ defined later.

\begin{defi}\label{graph1}
Let $\xi\in \cW^\fix$. We associate to it a graph $\Theta_\xi$ as follows: 
\begin{enumerate}
\item (vertex) $V=V(\ga_\xi)$ is the set of connected components of $\sC\ufix$;

\item (edge) $E$ is the set of the connected components of $\sC\umv$;

\item (leg) $L\cong \{1,\cdots,\ell\}$ is the ordered set corresponding to the set of markings of $\Si^\sC$, $i \in L$ is attached to
$v\in V$ if $\Si_i^\sC\in \sC_v$;

\item (flag) $(e,v)\in F$ if $\sC_e\cap \sC_v\ne \emptyset$.
\end{enumerate}
We call $v\in V$ stable if $\sC_v$ is 1-dimensional; otherwise we call it unstable.
\end{defi}

The set of vertices has a partition $V=V_\infty\cup V_1\cup V_0$, and $v\in V_c$ if (the corresponding
component) $\sC_v\sub\sC_c$ for $c\in \Lambda$.
The set of edges $E$ has a partition $E=\cup_{c_1,c_2 \in \Lambda}E_{c_1 c_2}$ characterized by that
$e\in E$ lies in $E_{c_1 c_2}$ if 
the two ${G}$-fixed points of (the closure of the component corresponding to $e$) $\sC_e$ lie in $\sC_{c_1}$ and $\sC_{c_2}$ respectively.

We characterize the structure of $\xi|_{\sC_v}$ for a $v\in V$. In the remainder of this paper, $\alpha$ and $\beta$ are reserved
for integers in $[1,N]$.

\begin{lemm}\label{V} The followings characterize vertices in $V_\infty$, $V_1$ and $V_0$.
\begin{itemize}
\item[(a)] We have partitions $V_\infty=\cup_\alpha V_\infty^\alpha$, where $v\in V_\infty\ualp$ if the restrictions
$\rho|_{\sC_v}=1$,  $\mu\lalp|_{\sC_v}=1$,
$\mu_{\beta\ne\alpha}|_{\sC_v}=0$, and $\nu|_{\sC_v}=0$;
\item[(b)] We have particition $V_1=\cup_\alpha V_1^\alpha$, where $v\in V_1\ualp$ if the restrictions 
$\varphi|_{\sC_v}=\rho|_{\sC_v}=0$,  $\mu\lalp|_{\sC_v}=1$,
$\mu_{\beta\ne\alpha}|_{\sC_v}=0$, and $\nu|_{\sC_v}=1$;
\item[(c)] For $v\in V_0$, the restrictions 
$\varphi|_{\sC_v}$ is nowhere zero, $\mu|_{\sC_v}=0$, and $\nu|_{\sC_v}=1$.
\end{itemize}
\end{lemm}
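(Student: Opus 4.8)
The plan is to analyze $\xi|_{\sC_v}$ for a vertex $v$ of each type, using the fact that $G = (\CC^\ast)^N$ acts with positive-dimensional image on the relevant sheaves or fields over $\sC_v$, so that each field restricted to $\sC_v$ must be a $G$-eigensection. Since $\sC_v \sub \sC^G$ is fixed by $G$, the action on $\sC_v$ is trivial; hence $G$ acts on each summand $H^0(\sC_v, \sL\otimes\sN\otimes\bL_\alpha|_{\sC_v})$ with weight determined by $\alpha$, and the requirement that $(\mu,\nu)$ and $(\varphi,\mu)$ are nowhere vanishing forces a single $\mu_\alpha$ (or all of $\varphi$, or $\nu$) to be the ``surviving'' nowhere-vanishing section on $\sC_v$. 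Concretely, for each $\alpha$ the section $\mu_\alpha$ sits in the $\alpha$-th character space, so along $\sC_v$ at most one can be nonzero without violating $G$-invariance of the pair $(\sL,\sN,\dots)$ up to isomorphism; this is what produces the partition indexed by $\alpha$ in (a) and (b).

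First I would record the definitional constraints: by Definition \ref{Ca} and Definition \ref{graph1}, a vertex $v$ corresponds to a connected component $\sC_v$ of $\sC^G$, and membership $v\in V_c$ means $\sC_v\sub\sC_c$, i.e.\ $\mu|_{\sC_v}=0$ (for $c=0$), $\nu|_{\sC_v}=0$ (for $c=\infty$), or $\rho|_{\sC_v}=\varphi|_{\sC_v}=0$ (for $c=1$). Then I would handle case (c) first since it is easiest: if $\mu|_{\sC_v}=0$, then by Definition \ref{def-curve}(3) $(\mu,\nu)$ nowhere zero forces $\nu|_{\sC_v}$ nowhere vanishing, and by \ref{def-curve}(4) $(\varphi,\mu)$ nowhere zero forces $\varphi|_{\sC_v}$ nowhere vanishing; rescaling $\sN$ we may take $\nu|_{\sC_v}=1$. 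For (b), $\varphi|_{\sC_v}=\rho|_{\sC_v}=0$ gives via \ref{def-curve}(4) that $\mu|_{\sC_v}$ is nowhere vanishing and via \ref{def-curve}(5) that $\nu|_{\sC_v}$ is nowhere vanishing; then $G$-fixedness of $\xi$ means the $G$-action on $(\sC_v,\sL|_{\sC_v},\sN|_{\sC_v},\mu|_{\sC_v},\nu|_{\sC_v})$ is trivial up to isomorphism, and since $G$ scales the $\alpha$-th component of $\mu$ with weight one, the nowhere-vanishing of $\mu|_{\sC_v}$ combined with $G$-invariance forces exactly one $\mu_\alpha|_{\sC_v}$ to be nonzero (hence nowhere vanishing, since the others give no zeros to cancel), and that $\mu_\alpha|_{\sC_v}$ is then a trivializing section, so $\sL\otimes\sN|_{\sC_v}\cong\sO$ and we normalize $\mu_\alpha|_{\sC_v}=1$, $\mu_{\beta\ne\alpha}|_{\sC_v}=0$, $\nu|_{\sC_v}=1$. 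Case (a) is parallel: $\nu|_{\sC_v}=0$ forces $\rho|_{\sC_v}$ nowhere vanishing by \ref{def-curve}(5), and $(\mu,\nu)$ nowhere zero forces $\mu|_{\sC_v}$ nowhere vanishing, and the same $G$-eigensection argument isolates a single $\mu_\alpha|_{\sC_v}=1$; normalizing $\rho$ and $\mu_\alpha$ gives the stated form.

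The main obstacle I expect is the step in cases (a) and (b) showing that $G$-fixedness of the isomorphism class of $\xi$ implies only one $\mu_\alpha|_{\sC_v}$ survives — one must carefully unwind what it means for $t\cdot\xi\cong\xi$ for all $t\in G$, namely that there is an isomorphism covering the identity on $\sC$ intertwining $t\cdot\mu$ with $\mu$. Because the automorphisms of $\sL,\sN$ covering $\mathrm{id}_{\sC_v}$ act by scalars (as $\sC_v$ is connected, possibly with finite generic stabilizer from the twisted structure), the condition $t_\alpha\mu_\alpha|_{\sC_v} = \lambda(t)\mu_\alpha|_{\sC_v}$ for a single scalar $\lambda(t)$ independent of $\alpha$ forces all but (at most) one $\mu_\alpha|_{\sC_v}$ to vanish, since the characters $t\mapsto t_\alpha$ are pairwise distinct. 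Then the nowhere-vanishing hypotheses guarantee that exactly one survives and that it has no zeros. This argument is essentially the one already used in Example \ref{spinp} and in the stability analysis of Lemma \ref{stable-cri}, so I would cite the parallel reasoning there; apart from this eigensection bookkeeping the proof is a routine case check against Definition \ref{def-curve}.
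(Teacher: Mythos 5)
Your proposal is correct, and it is essentially the argument the paper has in mind: the paper omits the proof of Lemma \ref{V} entirely (``The proof of Lemma \ref{V} is straightforward''), and your combination of the definitional constraints from Definition \ref{Ca} with the nowhere-vanishing conditions (3)--(5) of Definition \ref{def-curve}, plus the observation that on a connected component of $\sC^G$ the intertwining isomorphisms of $\sL,\sN$ are scalars so that a nonzero $\mu_\alpha|_{\sC_v}$ forces the combined scalar to equal the character $t\mapsto t_\alpha$, hence at most one $\alpha$ survives, is exactly the intended reasoning. It is also the same weight bookkeeping the authors do spell out in the proof of Lemma \ref{E} (items (b1)--(b4)), so no gap to report; the only cosmetic remark is that ``$=1$'' is the paper's notation for ``nowhere vanishing,'' so no actual rescaling of $\sN$ is needed in case (c).
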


The proof of Lemma \ref{V} is straightforward.

%

\begin{lemm}\label{E}
We have partitions $E_{1\infty}=\cup_{\alpha} E_{1\infty}\ualp$, $E_{01}=\cup_\alpha E_{01}^\alpha$.
Here $e\in E_{\bullet\bullet}\ualp$ if and only if
$e\in E_{\bullet\bullet}$, $\mu\lalp|_{\sC_e\cap \sC_1}\neq 0$, and $\mu_{\beta\ne\alpha}|_{\sC_e\cap\sC_1}= 0$.
\end{lemm}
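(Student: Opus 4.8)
The plan is to show that each edge in $E_{1\infty}$ or $E_{01}$ carries a well-defined index $\alpha\in[1,N]$, recorded by the values of $\mu_1,\dots,\mu_N$ at the $G$-fixed point where the edge meets the level-$1$ part $\sC_1$; the fibers $E_{1\infty}^{\alpha}$ and $E_{01}^{\alpha}$ of this assignment will be the claimed partitions. First I would invoke the standard description of $G$-fixed objects (as in \cite{GP} and \cite{CLLL}): after passing, if necessary, to a cover over which the $G$-action is linearized, $\sC$ carries a $G$-action and $\sL,\sN$ are $G$-linearized compatibly with all fields, and each connected component of $\sC\umv$ is a smooth nonproper rational curve whose closure is a $\PP^1$ carrying exactly two $G$-fixed points. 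By the very definition of $E_{1\infty}$ (resp.\ $E_{01}$), one of these two points, call it $p_e$, lies on $\sC_1$; it is the unique such point because the other fixed point lies on $\sC_\infty$ (resp.\ $\sC_0$), which is disjoint from $\sC_1$ by the nonvanishing of $(\rho,\nu)$ (resp.\ of $(\varphi,\mu)$) in Definition \ref{def-curve}. Thus $\mu_\alpha|_{\sC_e\cap\sC_1}$ is to be read as the value $\mu_\alpha(p_e)$ in the line $(\sL\otimes\sN)|_{p_e}$.

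The substance of the lemma is the claim that \emph{exactly one} of $\mu_1(p_e),\dots,\mu_N(p_e)$ is nonzero. Since $p_e\in\sC_1=(\rho=\varphi=0)_\redd$ we have $\varphi(p_e)=0$, so the nonvanishing of $(\varphi,\mu)$ forces $\mu(p_e)\ne 0$; hence \emph{at least one} $\mu_\alpha(p_e)\ne 0$. On the other hand, near $p_e$ the section $\mu=(\mu_1,\dots,\mu_N)$ is nowhere zero, so $[\mu]=[\mu_1:\cdots:\mu_N]$ is a morphism to $\PP^{N-1}$ defined in a neighborhood of $p_e$, and it is $G$-equivariant for the action of $G=(\CC\sta)^N$ scaling the homogeneous coordinates (because $t\cdot\mu_\alpha=t_\alpha\mu_\alpha$). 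As $p_e$ is $G$-fixed, $[\mu](p_e)$ is a $G$-fixed point of $\PP^{N-1}$, and the fixed locus of this diagonal scaling action is precisely the set of $N$ coordinate points. Hence $[\mu](p_e)$ is the coordinate point with $1$ in a single slot $\alpha=\alpha(e)$, that is $\mu_{\alpha(e)}(p_e)\ne 0$ and $\mu_\beta(p_e)=0$ for $\beta\ne\alpha(e)$, which in particular shows that \emph{at most one} $\mu_\alpha(p_e)$ is nonzero. This is exactly the condition defining $E_{1\infty}^{\alpha}$ (resp.\ $E_{01}^{\alpha}$) in the statement.

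It then only remains to observe that $e\mapsto\alpha(e)$ partitions $E_{1\infty}$ into $\coprod_\alpha E_{1\infty}^{\alpha}$ and $E_{01}$ into $\coprod_\alpha E_{01}^{\alpha}$, which is immediate. For coherence with Lemma \ref{V} I would also remark that when the level-$1$ vertex $v$ incident to $e$ is stable, $\alpha(e)$ agrees with the index of $v$: indeed $p_e\in\sC_v$ and, for $v\in V_1^\alpha$, $\mu_\alpha$ is nowhere vanishing and $\mu_{\beta\ne\alpha}$ vanishes on $\sC_v$. The only mildly delicate points are the reduction to a genuinely $G$-linearized model and the identification of $\sC_e\cap\sC_1$ with the single point $p_e$; both are routine and already used in \cite{CLLL}, so I do not anticipate a real obstacle --- the real content is merely that the $N$ copies of the $\mu$-field decouple at $G$-fixed points, since $G$ touches the $\alpha$-th copy only through its $\alpha$-th factor, and a torus fixed point of $\PP^{N-1}$ under coordinate scaling is a coordinate point.
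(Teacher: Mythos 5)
Your proof is correct, and for the $E_{01}$ half it is in fact the paper's own argument: there the authors simply observe that $G$-invariance at the fixed point forces a unique $\alpha$ with $\mu_\alpha\ne 0$. For the $E_{1\infty}$ half you take a genuinely shorter route. You argue purely locally at the single fixed point $p_e=\overline{\sC}_e\cap\sC_1$: since $\varphi(p_e)=0$ forces $\mu(p_e)\ne 0$, equivariance of $[\mu]$ (equivalently, that a nonzero invariant value of $\mu_\alpha$ at a $G$-fixed point forces the fiber character of $\sL\otimes\sN$ there to be $-\bone_\alpha$, and these characters are pairwise distinct) pins down exactly one nonvanishing coordinate. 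The paper instead compares the two ends of the edge: it chooses $\alpha$ with $\mu_\alpha|_p\ne 0$ at $p=\overline{\sC}_e\cap\sC_\infty$ and $\beta$ with $\mu_\beta|_{p'}\ne 0$ at $p'\in\sC_1$, and excludes $\alpha\ne\beta$ by a weight-and-degree analysis of $\sL$, $\sN$ and $\omega^{\log}_\sC$ along the whole edge, with separate cases according to whether $p$ is a node/marking or a smooth non-marking point. Your local argument fully proves the lemma as stated and is cleaner; what the paper's global argument additionally buys is precisely the content of Remark \ref{remark1} --- the identities (b1)--(b4) with $\alpha=\beta$, hence $\sL\otimes\sN|_{\sC_e}\cong\sO_{\sC_e}$ and the matching of the ``hour'' at the two ends of an $E_{1\infty}$ edge --- which is invoked later (e.g.\ in Lemma \ref{unstable-q} and in the localization formulas); if one adopted your proof, that remark would need its own short argument. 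One minor point worth making explicit: $\overline{\sC}_e\cap\sC_1$ is exactly the one fixed point $p_e$, since besides the disjointness of $\sC_1$ from $\sC_\infty$ (resp.\ $\sC_0$) that you note, any further intersection point would form a finite $G$-invariant subset of the interior of the edge, impossible because $G$ acts nontrivially on $\sC_e$; this is routine, as you say, and your reduction to an honest linearized action after a cover of $G$ is the same standard step the paper takes for granted.
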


\begin{proof}
We characterize the set $E_{1\infty}$. Let $\xi\in\cW^\fix$ and let $E$, etc., be the set of its edges, etc..
Let $e\in E_{1\infty}$. We let $p\in \sC_e\cap\sC_\infty$ and  $p'\in\sC_e\cap\sC_1$. We suppose $p$ is either a
marking or a node of $\sC$, thus $\omega_{\sC}^{\log}|_{p}=\bone_0$ is the trivial $G$-representation.
Then by the definition of NMSP fields, we have 
\begin{itemize}
\item[(a1):] $\nu|_p=0$, $\rho|_p\ne 0$, and $\mu|_p\ne 0$;
\item[(a2):] $\varphi|_{p'}=\rho|_{p'}=0$, $\mu|_{p'}\ne 0$, and $\nu|_{p'}\ne 0$.
\end{itemize}
By $\mu|_p\ne 0$ (resp. $\mu|_{p'}\ne 0$), there is an $\alpha$ (resp. $\beta$) such that $\mu\lalp|_p\ne 0$
(resp. $\mu\lbe|_{p'}\ne 0$). We claim that $\alpha=\beta$.

Suppose $\alpha\ne \beta$. We let $G\lalp\le G=(\CC\sta)^N$ be the $\alpha$-th factor of ${G}$,
  and $\bone_\alpha$ be the one-dimensional $G$-representation where $G_\alpha$ acts with weight one
and $G_{\beta\ne \alpha}$ acts trivially. 
Then by definition
$$\rho\in H^0(\sL^{-5}\otimes\omega_\sC^{\log}|_{\sC_e})^\fix,\quad \mu_\delta\in H^0((\sL\otimes\sN\otimes \bone_\delta)|_{\sC_e})^\fix,
\quad\nu\in H^0(\sN|_{ \sC_e})^\fix.
$$
Therefore, as $G$-representations, we have
\begin{itemize}
\item[(b1):] $\sL^{\otimes 5}|_{p}=\bone_0$, because $\rho|_{p}\ne 0$, $\rho$ is $G$-invariant, and
$\omega_{\sC}^{\log}|_{p}=\bone_0$.
\item[(b2):] $\sN|_{p}=-\bone\lalp$, because $\mu\lalp|_p\ne 0$, and $\mu\lalp\in H^0((\sL\otimes\sN\otimes \bone\lalp)|_{\sC_e})^{{ G}}$.
\item[(b3):] $\sN|_{p'}=\bone_0$, because $\nu|_{p'}\ne 0$.
\item[(b4):] $\sL|_{p'}=-\bone_\beta$, because $\mu\lbe|_{p'}\ne 0$, and
$\mu\lbe\in H^0((\sL\otimes\sN\otimes \bone_\beta)|_{\sC_e})^{{ G}}$.
\end{itemize}
Because $\alpha\ne\beta$, by looking at the $G\lalp$ action,  the conclusions (b2) and (b3) imply that the $G\lalp$ action on
$\sC_e$ is non-trivial. Then (b1) and (b4) imply that $\deg\sL|_{\sC_e}=0$. Since $p'$ is a scheme point, this forces $\sL^{\otimes 5}|_{\sC_e}=\sO_{\sC_e}$. Therefore, $\rho|_{\sC_e}$ is nowhere
vanishing, violating $\rho|_{p'}=0$. This proves that $\alpha\ne\beta$ is impossible.

We now suppose $p$ is a smooth non-marking of $\sC$. In this case, (a1) and (a2), and (b2)-(b4) still hold,
while (b1) must be replaced by
\begin{itemize}
\item[(b1'):] $\sL^{\otimes 5}\otimes \sO_{\sC_e}(-p)|_{p}=\bone_0$.
\end{itemize}
Combined with (b4), we conclude that $\deg\sL^{\otimes 5}=1$, which is impossible since $\sC_e\cong\Po$,
a (non-stack) scheme. This proves that $\alpha\ne\beta$ is impossible in this case, too.
Since $\alpha=\beta$, we deduce that $\mu\lalp|_{\sC_e\cap \sC_1}\neq 0$, and $\mu_{\beta\ne\alpha}|_{\sC_e\cap\sC_1}= 0$.

For the statement on $E_{01}$, we note that for any $e\in E_{01}$, with $p=\sC_1\cap\sE_e$ and $p'=\sC_0\cap\sC_e$,
we have $\varphi|_{p'}=0$ and $\mu|_{p}\ne 0$. Thus by the $G$-invariance, there is a unique $\alpha$ so that
$\mu\lalp|_{p}\ne 0$ and $\mu_{\beta\ne\alpha}|_p=0$. This proves the statement on $E_{01}$.
\end{proof}

\begin{rema}\label{remark1}
We remark that for $e\in E^\alpha_{1\infty}$, (b1)-(b4) (with $\alpha=\beta$) imply that $\sL\otimes\sN|_{\sC_e}\cong\sO_{\sC_e}$.
\end{rema}


\begin{lemm}
We have $E_{00}=\emptyset$; we have partitions $E_{\infty\infty}=\cup_{\alpha\ne \beta} E_{\infty\infty}\ualpbe$
and $E_{11}=\cup_{\alpha\ne \beta} E_{11}\ualpbe$, where $e\in E_{\bullet\bullet}\ualpbe$ if and only if
$\mu\lalp|_{\sC_e}\ne 0$, $\mu\lbe|_{\sC_e}\ne 0$, and $\mu_{\delta\ne\alpha,\beta}|_{\sC_e}= 0$.
\end{lemm}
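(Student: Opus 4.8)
\emph{Strategy.} Let $\xi\in\cW^\fix$, so $\Aut(\xi)$ is finite. The plan is to follow the proof of Lemma~\ref{E}: for an edge component $\sC_e$ of $\Theta_\xi$ with $G$-fixed points $p,p'$, I would read off the $G$-linearizations of $\sL|_{\sC_e}$ and $\sN|_{\sC_e}$ at $p,p'$ from Lemma~\ref{V}, deduce which fields vanish or are constant along $\sC_e$, and exclude the degenerate possibilities using the finiteness of $\Aut(\xi)$.

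\emph{$E_{00}=\emptyset$.} If $e\in E_{00}$, both $p,p'$ lie in $\sC_0$, so Lemma~\ref{V}(c) gives $\mu|_{p}=\mu|_{p'}=0$, whence $\varphi,\nu$ are nonvanishing at $p,p'$; since $\varphi,\nu$ are $G$-invariant, this forces $\sL|_{p}=\sL|_{p'}=\sN|_{p}=\sN|_{p'}=\bone_0$. As $G$ acts nontrivially on $\sC_e\cong\Po$, it follows that $\sL|_{\sC_e}$ and $\sN|_{\sC_e}$ are equivariantly trivial of degree $0$, so $\varphi|_{\sC_e},\nu|_{\sC_e}$ are nowhere-zero constants, $\mu_\alpha|_{\sC_e}\in H^0((\sL\otimes\sN\otimes\bone_\alpha)|_{\sC_e})^G=0$, and $\rho|_{\sC_e}\in H^0(\omega_\sC^{\log}|_{\sC_e})^G$ is constant or zero. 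Hence every field would be constant on $\sC_e$ and both line bundles equivariantly trivial, so the $\gm$ of automorphisms scaling $\sC_e$ and fixing $p,p'$ would lie in $\Aut(\xi)$ — contradicting stability. Thus $E_{00}=\emptyset$.

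\emph{$E_{11}$ and $E_{\infty\infty}$.} For $e\in E_{11}$ with $p\in\sC_v$, $p'\in\sC_{v'}$, $v\in V_1\ualp$, $v'\in V_1\ube$ ($\alpha,\beta$ a priori arbitrary), I would argue exactly as in the proof of Lemma~\ref{E} (using $\omega_\sC^{\log}|_p=\bone_0$ at a node or marking, with the case of a smooth non-special endpoint disposed of as in the last paragraph there) that Lemma~\ref{V}(b) yields, as $G$-representations, $\sL|_{p}=-\bone_\alpha$, $\sN|_{p}=\bone_0$, $\sL|_{p'}=-\bone_\beta$, $\sN|_{p'}=\bone_0$. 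Comparing the $\sN$-weights makes $\sN|_{\sC_e}$ equivariantly trivial, so $\nu|_{\sC_e}$ is a nowhere-zero constant; the $G$-weights of the sections of $\sL|_{\sC_e}$ and of $(\sL^{-5}\otimes\omega_\sC^{\log})|_{\sC_e}$ run through arithmetic progressions between the endpoint weights $-\bone_\alpha,-\bone_\beta$ (resp.\ $5\bone_\alpha,5\bone_\beta$), which span lines avoiding the origin, so $\varphi|_{\sC_e}=\rho|_{\sC_e}=0$; and for $\delta\notin\{\alpha,\beta\}$ the section weights of $(\sL\otimes\sN\otimes\bone_\delta)|_{\sC_e}$ all have $\delta$-component $1$, so $\mu_\delta|_{\sC_e}=0$, while $\mu_\alpha|_{\sC_e},\mu_\beta|_{\sC_e}\ne 0$ since they are nonzero at $p,p'$. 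If $\alpha\ne\beta$ this gives $e\in E_{11}\ualpbe$ (the reverse inclusion being immediate). If $\alpha=\beta$, one gets moreover $\mu_\alpha|_{\sC_e}$ constant and $\sL\otimes\sN|_{\sC_e}$ equivariantly trivial, so all fields are constant on $\sC_e$ and a $\gm$ of automorphisms lies in $\Aut(\xi)$, which is impossible; hence $\alpha\ne\beta$. The case $e\in E_{\infty\infty}$ is identical using Lemma~\ref{V}(a): one finds $\sL|_{p}=\sL|_{p'}=\bone_0$, $\sN|_{p}=-\bone_\alpha$, $\sN|_{p'}=-\bone_\beta$, hence $\sL|_{\sC_e}$ equivariantly trivial, $\rho|_{\sC_e}$ a nowhere-zero constant, $\nu|_{\sC_e}=0$ (same weight computation as for $\varphi$ above), $\mu_\delta|_{\sC_e}=0$ for $\delta\notin\{\alpha,\beta\}$, $\mu_\alpha|_{\sC_e},\mu_\beta|_{\sC_e}\ne 0$, and $\alpha=\beta$ again excluded by stability, so $e\in E_{\infty\infty}\ualpbe$.

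\emph{Main obstacle.} The substantive step is the equivariant bookkeeping — tracking $\sL|_p,\sN|_p,\omega_\sC^{\log}|_p$ as $G$-representations at each flag, the (possibly rational, in the twisted case) degrees of $\sL,\sN$ on $\sC_e$, and the fact that the $G$-weights of the sections of a line bundle on $\sC_e\cong\Po$ form an arithmetic progression joining its two endpoint weights. This is a direct transcription of the computation in the proof of Lemma~\ref{E}; granting it, the vanishing and constancy conclusions follow at once, and the only remaining observation is that an edge along which every field is constant and both $\sL,\sN$ are equivariantly trivial carries a $\gm$ of automorphisms, contradicting the stability of $\xi$.
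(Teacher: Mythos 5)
Your proposal is correct and follows essentially the same route as the paper: the paper's proof is just the remark that one argues as in Lemma~\ref{E}, i.e.\ by reading off the $G$-weights of $\sL,\sN$ at the two fixed points of $\sC_e$ from Lemma~\ref{V} and comparing with the weights available to invariant sections, and your write-up carries out exactly this bookkeeping, with the finiteness of $\Aut(\xi)$ (a $\gm$ of automorphisms on an edge where all fields are constant and the bundles equivariantly trivial) supplying the expected contradiction that rules out $E_{00}$ and the case $\alpha=\beta$.
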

\begin{proof}
The proof uses the similar arguments as in Lemma \ref{E}.
\end{proof}

Recall the convention we adopt throughout:
a vertex $v\in V$ is stable if $\dim \sC_v=1$; otherwise called unstable.
We let $V^S\subset V$ be the set of stable vertices.
For any $v\in V$, we let $E_v$ (resp. $L_v$)  be the set of edges (resp. legs) incident to $v$.
We introduce the convention that for any vertex $v\in V^S$ and $e\in E_v$, we denote by $q_{(e,v)}\defeq \sC_e\cap\sC_v$
the associated node in $\sC_v$.

\medskip
To add decorations to $\Theta_\xi$, we assign monodromies to flags $(e,v)\in F$. 

\begin{defi}\label{flag}Let $(e,v)\in F$.
\begin{enumerate}
\item In case $e\in E_{01}$, we assign $\gamma_{(e,v)}=(1,\rho)$;
\item in case $e\in E_{1\infty}$ and $v\in V_1$, we assign $\gamma_{(e,v)}=(1,\varphi)$;
\item in case $e\in E_{1\infty}$, $v\in V_\infty$ and $\deg\sL|_{\sC_e}=a+\frac{b}{5}$, where $b\in [1,5]$, we
assign $\gamma_{(e,v)}=\zeta_5^b$ when $b\ne 5$, $\gamma_{(e,v)}=(1,\varphi)$ when $b=5$;
\item in case $e\in E_{0\infty}$, we assign $\gamma_{(e,v)}=1$.
\end{enumerate}
\end{defi}

For (3), by \cite[Convention 2.1]{CLLL} and the convention before Proposition \ref{dim},
this choice of $b\in [1,5]$ makes $\gamma_{(e,v)}=b$ valid.

\begin{defi}\label{graph2} Let $\xi\in\cW^\fix$.
We endow $\Theta_\xi$ with the following decorations:
\begin{itemize}
\item[(a)] (genus) For $a\in V\cup E$, we let $g_a=h^1(\sO_{\sC_a})$;
\item[(b)] (degree) For $a\in V\cup E$, we let $(d_{0a},d_{\infty a})=
( \deg\sL\otimes\sN|_{\sC_a},\deg\sN|_{\sC_a})$; 
\item[(c)] (monodromy) The monodromy of the bundle $\sL$ for a leg is that of the marking it represents (recall that $\sL$ is representable);
\item[(c')] (monodromy) For $(e,v)\in F$,
we let $\gamma_{(e,v)}$ be as in Definition \ref{flag};
\item[(d)] (hour) For $v\in V_\bullet\ualp$, we let $\alpha_v=\alpha$, called the hour of $v$; for $e$ incident to $v$ we let $\alpha_{(e,v)}=\alpha_v$;
\item[(e)] (level) We say that elements of $V_\infty$ and $E_{\infty\infty}$ (resp. $V_1$ and $E_{11}$; resp. $V_0$)
have level $\infty$ (resp. level $1$; resp. level $0$). 
\end{itemize}
\end{defi}

We use $\Theta_\xi$ to emphasize that it is associated with $\xi$.
We caution that there may exist $\xi_1$ and $\xi_2$ lying in the same connected component of $\cW^\fix$ while
$\Theta_{\xi}\not\cong\Ga_{\xi_2}$.


\subsection{Balanced nodes and flat graphs}\label{flat}

We let $G\lggd$ be the set of all decorated graphs of $\xi\in\cW^\fix$, where $\cW=\cW\lggd$, modulo isomorphisms. We introduce the procedure  of
flattening the graph $\ga_\xi\in G\lggd$.

\begin{defi}
Let $\sC$ be a $\Gm$-twisted curve; 
let $q$ be a node of $\sC$, with $\hat \sC_1$ and $\hat \sC_2$ the two branches of
the formal completion of $\sC$ along $q$. We say that $q$ is $\Gm$-balanced 
if $T_q\hat \sC_1\otimes T_q\hat \sC_2\cong \bL_0$, the trivial $G$-representation.
\end{defi}

There are nodes of $\xi$ visible from $\Ga_\xi$. For one, any flag $(e,v)\in F$ with $v\in V^S$ associates to a node
$q_{(e,v)}$ that is $G$-unbalanced. The others are
\beq\label{NGa}
N(\Ga_\xi)=\{v\in V(\ga_\xi): \sC_v\text{ is a node of }\sC\}.
\eeq
Equivalently, $v\in N(\Ga_\xi)$ if $v$ is unstable, $|L_v|=0$, and $|E_v|=2$. For $v\in N(\Ga_\xi)$, we denote
by $q_v= \sC_v$, the associated node.
We call $v\in N(\Ga_\xi)$ $\Gm$-balanced if 
$q_v$ is a $\Gm$-balanced node.
We denote by $N(\Ga_\xi)^{\text{bal}}$ the set of ${G}$-balanced $v\in N(\Theta_\xi)$.
 We also denote $d_e:=\deg\sL|_{\sC_e}$ for convenience.

\begin{lemm}\label{unstable-q}
Let $v\in N(\Ga_\xi)$, and let $e$ and $e'$ be the two edges incident to $v$.
Then $q_v$ is ${G}$-balanced if and only if $v\in V_1$, $d_{e}+d_{e'}=0$, and
$(\sC_e\cup\sC_{e'})\cap\sC_\infty$ is a node or a marking of $\sC$.
\end{lemm}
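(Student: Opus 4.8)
The plan is to analyze the $G$-action on the line bundles $\sL$ and $\sN$ in a formal neighborhood of the node $q_v$, using the representation-theoretic bookkeeping already developed in the proof of Lemma \ref{E}. First I would fix $v\in N(\Ga_\xi)$ with incident edges $e, e'$, write $p = q_v = \sC_v$ for the node, and let $\hat\sC_e, \hat\sC_{e'}$ be the two branches. Since $v$ is unstable with $|L_v|=0$, $|E_v|=2$, and $\omega^{\log}_\sC|_p = \bone_0$ is the trivial $G$-representation (the node carries no marking). The first step is to observe that $q_v$ being $G$-balanced means $T_p\hat\sC_e \otimes T_p\hat\sC_{e'} \cong \bL_0$; combined with $\omega_\sC|_p = (T_p\hat\sC_e)^\vee \otimes (T_p\hat\sC_{e'})^\vee$ this says $\omega_\sC|_p$ is the trivial representation, hence so is $\omega^{\log}_\sC|_p$ regardless (this part is automatic at a node). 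The content is that balancedness constrains how the two edge-bundles $\sL|_{\sC_e}, \sL|_{\sC_{e'}}$ meet.

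Next I would split according to which of $V_0, V_1, V_\infty$ the vertex $v$ lies in, and which strata $e, e'$ belong to. The key restriction comes from: on a level-$\infty$ edge $e \in E_{1\infty}^\alpha$, Remark \ref{remark1} gives $\sL\otimes\sN|_{\sC_e} \cong \sO_{\sC_e}$, while $\sN|$ at the $\sC_\infty$ end is $-\bone_\alpha$ and $\sL^{\otimes 5}|$ there is $\bone_0$ (items (b1)--(b4) in the proof of Lemma \ref{E}); on a level-$0$ edge $e \in E_{0\infty}$ one has $\varphi \ne 0$ somewhere forcing a different weight pattern. The computation I expect is: at $p$, the two incident edges force $\sL|_p$ to be simultaneously a character coming from the $e$-side and from the $e'$-side, and the node is balanced precisely when the sum of the corresponding weights (equivalently $d_e + d_{e'}$, the total $\sL$-degree read through the action) vanishes. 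Matching this with the monodromy conventions of Definition \ref{flag} and the constraint that $\rho, \nu$ are $G$-invariant sections that vanish or not on the relevant branch, one checks that the only stratum for $v$ compatible with a balanced node is $v\in V_1$ (if $v\in V_0$ then $\varphi|_{\sC_v}\ne 0$ but $\sC_v$ is a point, forcing $\sL|_p = \bone_0$ and an incompatible edge weight on the $\infty$-side; if $v\in V_\infty$ then $\nu|_{\sC_v}=0$, $\mu_\alpha|_{\sC_v}=1$, and the node would be $G$-unbalanced by the weight count unless degeneracies occur that are excluded). Finally, for $v\in V_1$, running through the weights of $\sL$ on $\sC_e$ and $\sC_{e'}$ (using $d_e = \deg\sL|_{\sC_e}$ and the identification of the $G$-character of $\sL$ at $p$ via the $\mu_{\alpha_v}$-section) shows $T_p\hat\sC_e \otimes T_p\hat\sC_{e'}$ is trivial iff $d_e + d_{e'} = 0$, and one records the remaining geometric constraint that $(\sC_e \cup \sC_{e'})\cap\sC_\infty$ is a node or marking (needed so that $\omega^{\log}$ is trivial there, which is what pinned down the weight of $\sL^{\otimes 5}$ in the first place — the alternative, a smooth non-marked point, gave $\deg\sL^{\otimes 5} = 1$, impossible, as in item (b1$'$)).

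For the converse I would simply reverse this: given $v\in V_1$ with $d_e + d_{e'} = 0$ and the endpoint condition, the explicit local model of $\sL$ near $p$ (balanced or banded node, with the edge $\CC\sta$-actions on $\Po\cong\sC_e, \sC_{e'}$ of opposite weights $1/d_e$ and $1/d_{e'} = -1/d_e$ up to sign) shows $T_p\hat\sC_e \otimes T_p\hat\sC_{e'}$ carries the trivial character, i.e. $q_v$ is $G$-balanced. The main obstacle I anticipate is the careful sign/weight bookkeeping in translating ``$G$ acts on $\sC_e\cong\Po$ with a given speed'' into the character of the tangent line at the fixed point $p$, and keeping consistent with the $\frac{b}{5}$-fractional-degree conventions of Definition \ref{flag}(3) when $\sC_e$ is a twisted $\Po$; but this is exactly the type of computation already carried out in the proof of Lemma \ref{E}, so it should go through verbatim with the roles of the sections permuted as in Lemma \ref{valuative2}.
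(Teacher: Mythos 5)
Your overall strategy---reading off the $G$-characters of the tangent lines at $q_v$ from the characters of $\sL,\sN$ at the ends of the two incident edges and their degrees, split according to the level of $v$---is in substance the computation the paper performs: after invoking Remark \ref{remark1} it delegates exactly this weight bookkeeping for the level-$1$ case to the MSP computation in \cite[Lem.~2.14]{CLLL2}, and dismisses $v\in V_0\cup V_\infty$ as a direct weight count. Your cancellation criterion at a level-$1$ node joining an $E_{01}$-edge and an $E_{1\infty}$-edge of the same hour $\alpha$ (tangent weights $-t_\alpha/d_e$ and $-t_\alpha/d_{e'}$, hence balanced iff $d_e+d_{e'}=0$) is the right mechanism, and the converse direction is as routine as you say.

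There is, however, a genuine flaw in your justification of the third clause of the lemma, that $(\sC_e\cup\sC_{e'})\cap\sC_\infty$ be a node or a marking. You argue the alternative (a smooth unmarked point at the $\infty$-end) is \emph{impossible}, ``as in item (b1$'$), giving $\deg\sL^{\otimes 5}=1$''. But that contradiction in the proof of Lemma \ref{E} is derived only under the hypothesis $\alpha\ne\beta$ and serves there to rule out distinct hours at the two ends of an $E_{1\infty}$-edge; here both ends carry the same hour, and no such contradiction arises. Configurations with a smooth unmarked $\infty$-end do occur---they are exactly the unstable vertices in $V_\infty^{0,1}$ that appear throughout the localization formula of Section~6---and what the lemma asserts is that in this situation $q_v$ is \emph{unbalanced}, not that the situation cannot happen. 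The correct argument is a weight computation: with a smooth unmarked $\infty$-end both ends of $\sC_{e'}$ are scheme points, so $d_{e'}\in\ZZ$, and the tangent weight of $\sC_{e'}$ at $q_v$ is $-5t_\alpha/(5d_{e'}+1)$ rather than $-t_\alpha/d_{e'}$ (the $\delta$-shift of Section~6); cancellation against $-t_\alpha/d_e$ with $d_e\in\ZZ_{\ge 1}$ would force $5(d_e+d_{e'})=-1$, which is impossible. Two smaller points: the identity $\omega_\sC|_p=(T_p\hat{\sC}_e)^\vee\otimes(T_p\hat{\sC}_{e'})^\vee$ is not correct as $G$-representations (the fiber of $\omega_\sC$ at a node is canonically trivial via the residue, which is all you need there); and your exclusion of $v\in V_0$ presumes an ``$\infty$-side'', whereas both incident edges may lie in $E_{01}$ with possibly different hours, so the level-$0$ and level-$\infty$ exclusions must be run through all edge-type and hour combinations, checking in each case that the two tangent characters cannot sum to zero.
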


\begin{proof} Let $v$, $e$ and $e'$ be as stated.
Suppose $v\in V_1(\ga_\xi)$ 
and $e\in E_{1\infty}$, then by Remark \ref{remark1} we have
$\mu\lalp|_{\sC_e}=1$, where the hour $h_v=\alpha$. Then the statement follows from the proof of \cite[Lem.\,2.14]{CLLL2}.

It is direct to argue that when $v\in V_\infty\cup V_0$, $v$ is not balanced. We omit the details here. This proves the
lemma.
\end{proof}



As in \cite[\S 2]{CLLL2}, when the statement of  Lemma \ref{unstable-q} holds, one necessarily has $e\in E_{1\infty}, e'\in E_{01}$  (or $e\in E_{01},e'\in E_{1\infty}$) because $d_{e}+d_{e'}=0$.
\smallskip

We now introduce the process of flattening a graph. 
We call a graph $\Ga\in G\lggd$ flat if $N(\Ga)^{\text{bal}}=\emptyset$.
For a graph $\ga\in G\lggd$ with $N(\Ga)^{\text{bal}}\ne\emptyset$, we define its flattening $\ga^\fl$, as follows.

\begin{cons}
For each $v\in N(\Ga)^{\text{bal}}$, which has $e\in E_{1 \infty}(\ga)$ and $e'\in E_{01}(\ga)$ incident to it,
we eliminate the vertex $v$ from $\Ga$, combine the two edges $e$ and $e'$ to
a single edge $\ti e$ incident to the other two vertices (in $V_\infty$ and $V_0$) that are incident to $e$ or $e'$,
and demand that $\ti e$ lies in $E_{0 \infty}$.  For decorations, we let $g_{\ti e}=0$, $d_{0\ti e}:=d_{0e'}$, $d_{\infty\ti e}=d_{\infty e}$  (using $d_e+d_{e'}=0$ we know $d_{0e'}=d_{\infty e}$), applying Definition \ref{flag} to assign $\gamma_{(\ti e, v)}$,
while keeping the remainder unchanged. We apply this procedure to every $v\in N(\ga)^{\mathrm{bal}}$ to obtain $\Ga^\fl$. 
\end{cons}

If $\Ga$ is flat, then $\Ga^\fl=\Ga$. We let $G\lggd^\fl=\{\Ga^\fl: \Ga\in G\lggd\}/\sim$.
We use $G\lggd^\fl$ to index an open and closed partition of $\cW^G$.

\medskip
Given a flat $\ga\in G\lggd^\fl$, we define a $\ga$-framed ${G}$-NMSP field to be a pair $(\xi,\eps)$, where $\eps:\ga_\xi^\fl\cong\Ga$
is an isomorphism (of decorated graphs). 
As in \cite{CLLL2},
we can make sense of families of $\Ga$-framed ${G}$-NMSP fields (cf. \cite[\S  2.4]{CLLL2}).
We then form the groupoid $\Wfix$ of $\Ga$-framed ${G}$-NMSP fields with obviously defined arrows;
$\Wfix$ is a DM stack, with a forgetful morphism
$$\iota_\ga: \Wfix\lra \fixW.
$$
Let $\cW_{(\Ga)}$ be the image of $\iota_\Ga$.
It is an open and closed substack of $\fixW$. The factored morphism
$\Wfix\to \cW\lrga$ 
is an $\Aut(\Ga)$-torsor. Further, we have an open and closed substack decomposition
\beq\label{decomp}
\cW^\fix=\coprod_{\ga\in G\lggd^\fl}\cW\lrga.
\eeq

The cosection localized virtual cycles
$[\cW_{(\ga)}]\virt$ are the terms appearing in the localization formula of $[\cW]\virt$. Because $\cW_\ga\to\cW_{(\ga)}$
is an $\Aut(\ga)$-torsor, the similarly defined virtual cycle $[\cW_{\ga}]\virt$ has (cf. \S  5; \cite[Coro.\,3.8]{CLLL2})
$$[\cW_{\ga}]\virt=|\Aut(\ga)|\cdot [\cW_{(\ga)}]\virt.
$$

\subsection{Proof of Proposition \ref{finite}}\label{bound}
We use the partition \eqref{decomp} to prove that $\cW^-$ is of finite type.
We first prove

\begin{lemm}\label{bd}
The stack $(\cW^{-})^\fix$ is of finite type.
\end{lemm}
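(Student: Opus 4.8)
The plan is to bound, by finitely many discrete invariants, the decorated graphs $\Theta$ that can occur for points of $(\cW^{-})^\fix$, and then to show that for each such $\Theta$ the stratum $\cW_{(\Theta)}$ is itself of finite type. By the decomposition \eqref{decomp}, $(\cW^-)^\fix=\coprod_{\ga\in G\lggd^\fl}(\cW_{(\ga)}\cap\cW^-)$, so once both claims are established the lemma follows since a finite disjoint union of finite-type stacks is of finite type. Throughout I fix the narrow numerical data $(g,\gamma,\bd)$ and the integer $N$.

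The first step is the combinatorial finiteness: I claim that only finitely many flat graphs $\ga\in G\lggd^\fl$ are realized by points of $\cW^-$. The genus decorations $g_a$ are nonnegative integers summing (together with the first Betti number of $\Theta$ and the genera contributed by nodes) to $g$, so they are bounded; the legs $L$ are fixed by $\gamma$; the hours $\alpha_v\in[1,N]$ and the monodromies $\gamma_{(e,v)}\in\bmu_5^{\mathrm{br}}$ take values in fixed finite sets. The key point is to bound the number of vertices and edges and the degree decorations $(d_{0a},d_{\infty a})$. Here I would use Lemma \ref{V}, Lemma \ref{E} and Remark \ref{remark1}: on a level-$1$ vertex or on an edge in $E_{1\infty}\cup E_{01}$ one has $\sL\otimes\sN|_{\sC_a}\cong\sO$, on a level-$\infty$ vertex $\sL^{\otimes 5}|_{\sC_v}\cong\omega^{\log}$ and $\mu_{\alpha_v}|_{\sC_v}=1$ so $\sN|_{\sC_v}\cong\sL^{-1}$, while on a level-$0$ vertex $\nu=1$ forces $\sN|_{\sC_v}\cong\sO$ and $\varphi$ defines a stable map to $\PP^4$ of bounded degree. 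Summing $d_{0a}=\deg(\sL\otimes\sN)|_{\sC_a}$ over $V\cup E$ gives $d_0$ and likewise for $d_\infty$, each degree being a nonnegative rational with bounded denominator (dictated by the finitely many possible monodromy types), and each individual term being nonnegative for components of positive level by the positivity on $\Po$; this bounds all the degrees, hence the number of components, hence $|V|$ and $|E|$. Thus $G\lggd^\fl\cap\{\text{realized in }\cW^-\}$ is finite.

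The second step is to show each $\cW_{(\ga)}$, for $\ga$ in this finite set, is of finite type. By \S\ref{flat} the morphism $\cW_\ga\to\cW_{(\ga)}$ is an $\Aut(\ga)$-torsor with $\Aut(\ga)$ finite, so it suffices to treat $\Wfix$. A $\ga$-framed field decomposes its domain curve into the prescribed vertex curves $\sC_v$ and edge curves $\sC_e$; the edge curves are rigid $\Po$'s carrying bundles of fixed degree with a finite choice of linearization, so they contribute a finite-type (in fact finite, up to the node data) family, while the vertex curves $\sC_v$ with their markings, nodes and bundles of fixed genus, degree and monodromy range over a locally closed substack of a product of $\cM^{\mathrm{tw}}_{g_v,n_v}$ with a Picard/relative-$\Hom$ stack of fixed numerical type, all of which are of finite type (using \cite{AJ,O} and that the relevant $R\pi_*$'s over a finite-type base of curves with fixed numerics are of finite type); the fields $(\varphi,\rho,\mu,\nu)$ on each piece live in a fixed coherent sheaf, i.e. in an affine-bundle-type parameter space of finite type. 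Gluing along the finitely many nodes is a closed condition. Hence $\Wfix$, and therefore $\cW_{(\ga)}$, is of finite type.

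I expect the main obstacle to be the first step — pinning down effective bounds on the degree decorations and thereby on the number of vertices and edges. The subtlety is exactly the one flagged after Lemma \ref{valuative2}: on a level-$0$ component one only knows $\sN|_{\sC_v}\cong\sO$ and $\varphi$ gives a stable (hence bounded-degree) map to $\PP^4$, but on level-$1$ and level-$\infty$ components the control on $\sL$ comes instead from $\sL\otimes\sN\cong\sO$ resp. $\sL^{\otimes5}\cong\omega^{\log}$, and one must combine these with the edge-degree relations ($d_e+d_{e'}=0$ at balanced nodes, already removed by flattening; positive edge degrees on $\Po$ otherwise) to see that no infinite chain of components can occur without violating $\sum d_{0a}=d_0$ or the stability of the level-$0$ map. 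Once the graph set is shown finite, the rest is a routine (if lengthy) unwinding of the moduli of twisted curves with line bundles and sections, for which the cited finiteness of $\cM^{\mathrm{tw}}_{g,\ell}$ and standard properties of $\Hom$/$\mathrm{Pic}$ stacks suffice.
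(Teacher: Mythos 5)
Your overall architecture (bound the combinatorial types occurring in $(\cW^-)^\fix$, then note that each stratum with bounded discrete data is of finite type) is the same as the paper's, which bounds the dual graphs $\Up_\xi$ of the underlying pointed curves (Lemma \ref{graph3}) and deduces the lemma from that. The second step of your plan is essentially routine and matches what the paper leaves implicit. The problem is that your first step — the actual finiteness of the combinatorial data — is asserted on grounds that do not hold, and this is exactly where the paper's real work lies.

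Concretely: you claim that the degree decorations are ``nonnegative \dots for components of positive level'' and that, since they sum to $d_0$ resp.\ $d_\infty$, this ``bounds all the degrees, hence the number of components.'' Neither implication is valid. First, while $\deg(\sL\otimes\sN)|_{\sC_v}\ge 0$ does hold for every component, $\deg\sN|_{\sC_v}$ is \emph{negative} on level-$\infty$ components as soon as $2g_v-2+n_v>0$, because there $\sL\otimes\sN|_{\sC_v}\cong\sO$ and $\sL^{\otimes 5}|_{\sC_v}\cong\omega^{\log}_{\sC}|_{\sC_v}$ force $\deg\sN|_{\sC_v}=-\tfrac15\deg\omega^{\log}_\sC|_{\sC_v}$; so the fixed total $d_\infty$ gives no a priori bound on the positive contributions $\deg\sN|_{\sC_v}\ge\tfrac15$ coming from the components in $\sC_{1\infty}$, since they can be offset by arbitrarily negative level-$\infty$ contributions compensated only by genus and markings. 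Second, even granting $d_{0v}\ge 0$ with $\sum_v d_{0v}=d_0$, this does not bound the number of vertices, because nothing yet excludes large numbers of components with $d_{0v}=d_{\infty v}=0$; ruling these out requires the stability criterion (Lemma \ref{stable-cri}) together with a quantitative count. This is precisely what the paper's proof of Lemma \ref{graph3} supplies: it adds $3\deg(\sL\otimes\sN)|_{\sC_v}$ auxiliary legs to each vertex and $2\delta(v)$ legs (with $\delta(v)=5\deg\sN|_{\sC_v}-|E_v\cap E_\infty|$) to the $\sC_{1\infty}$-vertices, and then proves, using $\sL^{\otimes 5}\cong\omega^{\log}_\sC$ on $\sC_\infty$ and a genus--marking count over the connected components of $\sC_\infty$, that the total number of added legs is at most $10d_\infty+4g+2\ell$; only then does semistability plus the bounded number of legs yield finiteness of the graphs, after observing that strictly semistable chains have length at most two. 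Your closing remark that ``no infinite chain of components can occur without violating $\sum d_{0a}=d_0$ or the stability of the level-$0$ map'' names the obstacle but does not overcome it: the dangerous chains live in $\sC_1$, $\sC_{1\infty}$ and $\sC_\infty$, where $d_{0}$-degrees vanish and the level-$0$ map gives no control, and their exclusion is the content of the inequality above. As written, the proposal therefore has a genuine gap at its central step.
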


The proof follows largely from that of \cite[\S  4.2]{CLLL}. The only necessary modification  is that,  for an MSP field
$\xi$ and for a $v\in V_\infty(\ga_\xi)$, we have $\sL\otimes\sN|_{\sC_v}\cong\sO_{\sC_v}$.
When $\xi$ is an NMSP field, and for a $v\in V_\infty(\Ga_\xi)$ and $e\in E_{\infty\infty}(\ga_\xi)$,
we still have $\sL\otimes\sN|_{\sC_v}\cong\sO_{\sC_v}$, but will have $\deg\sL\otimes\sN|_{\sC_e}>0$

As in \cite[\S  4.2]{CLLL}, we let $\Up_\xi$ be the dual graph of $\Si^\sC\sub \sC$. (Dual graphs have
vertices, edges, and legs, decorated by genus.)
As usual a vertex $v$ of a dual graph is {\sl stable} (resp. {\sl semistable}) if
$2g_v+n_v>2$ (resp. $\ge 2$), where $n_v=|L_v|+|E_v|$ and  $E_v$ (resp. $L_v$) is the set of edges (resp. legs) in
$\Up_\xi$ attached to $v$.
We prove.

\begin{lemm}\label{graph3}
The set $\Pi\defeq \{\Up_\xi\mid \xi\in (\cW^{-})^\fix(\CC)\}$ is a finite set.
\end{lemm}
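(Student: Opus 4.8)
The goal is to show the set $\Pi$ of dual graphs $\Up_\xi$ of curves underlying $\xi\in(\cW^-)^\fix(\CC)$ is finite. Since a dual graph is a finite combinatorial object decorated by genera, and the total genus is fixed at $g$, finiteness of $\Pi$ reduces to bounding the number of irreducible components (equivalently, the number of vertices) of $\sC$, uniformly over $\xi\in(\cW^-)^\fix(\CC)$. The strategy is to pass from the dual graph $\Up_\xi$ to the decorated graph $\Theta_\xi\in G\lggd$ from Section~4.1, use the stability condition to bound the number of \emph{stable} vertices and the degrees attached to them, and then bound everything else combinatorially.

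\textbf{Step 1: Reduce to bounding vertices of $\Theta_\xi$.} Each irreducible component of $\sC$ is either contained in $\sC\ufix$ (hence lies in some $\sC_v$, $v\in V$) or is a component of $\sC\umv$ (hence is an edge $e\in E$). So it suffices to bound $|V(\Theta_\xi)|$ and $|E(\Theta_\xi)|$ — or rather the total number of \emph{irreducible components} packaged by these — uniformly. Since by Lemma~\ref{E} and the subsequent lemmas every edge $e$ is a $\PP^1$ (a single component), it suffices to bound $|E(\Theta_\xi)|$ and the number of components inside each $\sC_v$. A standard argument (as in \cite[\S4.2]{CLLL}) bounds the latter once we bound the \emph{degrees} $(d_{0v},d_{\infty v})$ and know each $\sC_v$ is a subcurve of fixed total genus $\le g$.

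\textbf{Step 2: Bound the stable vertices and their degrees using stability.} This is the heart of the matter and where the modification over \cite[\S4.2]{CLLL} enters. For $v\in V^S$, the restriction $\xi|_{\sC_v}$ is stable in the appropriate sense; combined with Lemma~\ref{V} one identifies $\sC_v$ (for $v\in V_0$) with a stable map to $Q\subset\PP^4$ of degree $d_v$, (for $v\in V_1$) with a Hodge-type datum, and (for $v\in V_\infty$) with a spin/FJRW datum where $\sL\otimes\sN|_{\sC_v}\cong\sO_{\sC_v}$. In each case stability forces $2g_v-2+n_v+(\text{positive degree contribution})>0$, and since the total $\sL\otimes\sN$-degree is $d_0$ and the total $\sN$-degree is $d_\infty$, with $g$ fixed, one gets a uniform bound on $|V^S|$ together with all the vertex-degrees. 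The one genuinely new point — flagged in the paragraph before Lemma~\ref{graph3} — is that over MSP one had $\deg\sL\otimes\sN|_{\sC_e}=0$ on $\infty\infty$-edges, whereas now $\deg\sL\otimes\sN|_{\sC_e}>0$; so the degree bookkeeping must allocate the fixed $Nd_0 + d_\infty$ worth of degree across edges \emph{and} vertices, which still yields a finite bound since each edge consumes a positive integer amount.

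\textbf{Step 3: Bound edges and unstable vertices combinatorially.} Once $|V^S|$ and all degrees are bounded, I bound the edges: every edge is incident to at least one vertex carrying positive degree (there are no $E_{00}$ edges and $\deg\sL$ or $\deg\sL\otimes\sN$ along any edge is a positive rational with bounded denominator), so $|E|$ is bounded by the total available degree. Unstable vertices are then controlled: a non-nodal unstable $v$ must carry a leg or sit on many edges, hence there are at most $\ell+2|E|$ of them, and nodal unstable vertices number at most $|E|$. Since $g$ is fixed, there are finitely many ways to distribute genera over this bounded set of vertices and edges. Contracting the edges back to recover $\Up_\xi$ changes nothing essential. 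Hence $\Pi$ is finite.

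\textbf{Main obstacle.} The routine part is the combinatorial bounding in Steps~1 and~3; the real work is Step~2, and specifically the degree bookkeeping on $E_{\infty\infty}$ and $E_{11}$ edges which, unlike in the MSP case, carry strictly positive $\sL\otimes\sN$-degree. One must check that this extra degree is still drawn from the fixed pool determined by $\bd$ (so that it cannot be arbitrarily large) and that the stability of stable vertices is not disturbed — i.e., that the argument of \cite[\S4.2]{CLLL} goes through \emph{mutatis mutandis} after replacing "$\sL\otimes\sN$ is trivial on $\infty$-level components" with "$\sL\otimes\sN$ is trivial only on $\infty$-level \emph{vertices}, and has bounded positive degree on $\infty\infty$-edges." I expect this to be a short but careful bookkeeping argument rather than a conceptual difficulty.
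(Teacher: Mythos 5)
Your overall strategy (bound the components of $\sC$ by combining the stability of $\xi$ with degree bookkeeping for $\sL$, $\sN$, $\sL\otimes\sN$) is the same in spirit as the paper's, but the key quantitative claim in your Step~3 is false, and this is exactly where the real work lies. You assert that ``every edge is incident to at least one vertex carrying positive degree \dots\ so $|E|$ is bounded by the total available degree.'' For a component corresponding to an edge $e\in E_{1\infty}$ this fails: there $\sL\otimes\sN|_{\sC_e}\cong\sO_{\sC_e}$ and $\deg\sL|_{\sC_e}<0$, its level-$1$ neighbour has all three bundles trivial, and its level-$\infty$ neighbour has $\sL\otimes\sN$ trivial while $\deg\sN$ can be \emph{negative} (on $\sC_\infty$ one has $\sL^{5}\cong\omega^{\log}_\sC$, so $\deg\sN|_{\sD}=-\tfrac15\deg\omega^{\log}_\sC|_{\sD}$ up to the $\sL\otimes\sN$-degree). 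Consequently the positive $\sN$-degrees $\ge\frac15$ carried by the $1\infty$-components are \emph{not} drawn from a pool of size $d_\infty$: they can be compensated by negative contributions from level-$\infty$ components, so ``number of $1\infty$-components $\le 5d_\infty$'' is simply not available. The compensating negativity is controlled only through $\deg\omega^{\log}_\sC$ of $\sC_\infty$, i.e.\ through the genus, the markings, and the node count of $\sC_\infty$ --- this is the content of the paper's bookkeeping with $\delta(v)=5\deg\sN|_{\sC_v}-|E_v\cap E_\infty|$ in \eqref{del5} and the Euler-characteristic computation that bounds $\sum 2\delta(v)$ (and the number of connected components of $\sC_\infty$) by $10d_\infty+4g+2\ell$ rather than by $d_\infty$ alone. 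Relatedly, your ``main obstacle'' is misplaced: since $\deg\sL\otimes\sN\ge 0$ on \emph{every} irreducible component, the positivity on $E_{\infty\infty}$-edges is the harmless part (it is absorbed into the bound $3d_0$ on added legs); the delicate part is the $\sN$-degree accounting just described, which is already present in the MSP case and must be redone here.

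There is a second missing ingredient: even after the corrected bookkeeping, some $1\infty$-components with $\delta(v)=0$ (e.g.\ $\deg\sN|_{\sC_v}=\tfrac15$ meeting $\sC_\infty$ in a node) consume no countable resource at all and remain strictly semistable rational bridges; your Step~3 has no mechanism to bound them. The paper handles this by showing that, after adding the auxiliary legs, the enhanced dual graph $\ti\Up_\xi$ is semistable with no chain of strictly semistable vertices of length more than two, and then invoking finiteness of \emph{stable} dual graphs of genus $g$ with a bounded number of legs for the stabilization $(\ti\Up_\xi)\st$; finiteness of $\Pi$ follows because the contraction $\ti\Up_\xi\rightsquigarrow(\ti\Up_\xi)\st$ only collapses such short chains. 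Also note that stability of $\xi$ (Lemma~\ref{stable-cri}) is what rules out rational bridges and tails at levels $0$, $1$, $\infty$ with trivial degrees --- your Step~2 appeal to ``$2g_v-2+n_v+(\text{positive degree})>0$'' has no positive degree to call on for level-$\infty$ vertices, so it is this instability statement, not a degree inequality, that must be used there. As written, your proposal would need these two repairs (the $g,\ell$-corrected bound on the $1\infty$-part and the treatment of the residual strictly semistable chains) before it yields the finiteness of $\Pi$.
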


\begin{proof} Our proof follows closely that of \cite[\S n 4.2]{CLLL}.  We will omit the reasoning if it is a mere repetition.

Let $\xi\in\cW^-\lggd(\CC)^\fix$. As the curve $\Si^\sC\sub \sC$ may not be stable, knowing the total genus
and the number of legs of its dual graph $\Up_\xi$ are not sufficient to bound the geometry of $\Up_\xi$. Our approach is
to use the information of line bundles $\sL$ and $\sN$ on $\sC$ given by $\xi$ to add legs to $\Up_\xi$ to
form a semistable $\ti\Up_\xi$

Let $\index=\{0,01,1,1\infty,\infty,0\infty\}$.
We continue to denote by $\sC_0$, $\sC_{01}$, etc., the substack of stacks in $\sC$
as defined in Definition \ref{Ca}. For $a\in \index$, we define
$V(\Up_\xi)_a=\{v\in V(\Up_\xi)\mid \sC_v\sub \sC_a\}$. As $\xi\in  (\cW^{-})^\fix$, $V(\Up_\xi)_{0\infty}=\emptyset$.

We then add auxiliary legs to vertices of $\Up_\xi$. For every $v\in V(\Up_\xi)$ we
add $3\deg\sL\otimes\sN|_{\sC_v}$ auxiliary legs to $v$. (Note that we always have $\deg\sL\otimes\sN|_{\sC_v}\ge 0$.)
The total new legs added is at most $3d_0$.

It is easy to show that,  with the added auxiliary legs,  all $v\in V(\Up_\xi)-V(\Up_\xi)_{1\infty}$ are {\sl stable}.
We now treat the vertices in $V(\Up_\xi)_{1\infty}$.
We let
$$E_\infty
=\{e\in E(\Up_\xi)\mid e\in E_v\ \text{for some}\ v\in V(\Up_\xi)_\infty\}.
$$
For $v\in V(\Up_\xi)_{1\infty}$, we define
\beq\label{del5}
\delta(v)=5\deg\sN|_{\sC_v}-|E_v\cap E_{\infty}|\in \ZZ_{\ge 0}.
\eeq
Since $|E_v\cap E_{\infty}|=0$ or $1$, and since $\deg\sN|_{\sC_v}\ge \frac{1}{5}$, $\delta(v)$ takes
value in $\ZZ_{\ge 0}$ because $\deg\sN|_{\sC_v}\in \frac{1}{5}\ZZ$.
To each $v\in V(\Up_\xi)_{1\infty}$,  we add $2\delta(v)$ many auxiliary  legs to $v$.

We show that 
the number of new legs added to $V(\Up_\xi)_{1\infty}$ is bounded by $10d_\infty+4g+2\ell$.
Let $\sD_1, \ldots, \sD_s$ be the irreducible components of $\sC_\infty$, and  $\ell_i$ be the number
of markings on $\sD_i$. Because $\rho|_{\sD_i}$ is nowhere vanishing, and using $\deg \sL\otimes\sN|_{\sD_i}\ge 0$,
we have
$$0=-5\deg\sL|_{\sD_i}+\deg\omega_{\sC}^{\log}|_{\sD_i}\le
5\deg\sN|_{\sD_i}+\bl2g(\sD_i)-2+|\sC_{\text{node}}\cap\sD_i|+\ell_i\br.
$$
Let $\sC_\infty^1,\cdots\sC_\infty^t$ be the connected components of $\sC_\infty$.
Using $\cup\sD_i=\cup \sC_\infty^j$, we get
$$\sum_{i=1}^s\big(2-2g(\sD_i)-|\sC_{\text{node}}\cap\sC_\infty|\big)=
\sum_{j=1}^t \big(2-2g(\sC_\infty^j)\big)-\sum_{v\in V(\Up_\xi)_{1\infty}}|\sC_{\text{node}}\cap \sC_v|.
$$
Using $\deg\sN|_{\sC_v}=0$ unless $v\in V(\Up_\xi)_{1\infty}\cup V(\Up_\xi)_\infty$, letting
$\ell_\infty=\sum_i \ell_i$, we have
$$d_\infty \ge \deg\sN=\sum_{j=1}^t \frac{1}{5}\bl2-2g(\sC_\infty^j) -|\sC_\infty^j\cap\sC_{\text{node}}|\br -\frac{\ell_\infty}{5}+
\sum_{v\in V(\Up_\xi)_{1\infty}}\deg \sN|_{\sC_v}.
$$
Adding
$\displaystyle \sum_{v\in V(\Up_\xi)_{1\infty}} |E_v\cap E_{\infty}|=\sum_{j=1}^t |\sC_\infty^j\cap\sC_{1\infty}|,
$
we obtain
$$d_\infty  \ge \frac{2 t}{5}- \sum_{j=1}^t\frac{2}{5}\cdot g(\sC_\infty^i)-\frac{\ell_\infty}{5}+
\frac{1}{5} \sum_{v\in V(\Up_\xi)_{1\infty}} \delta(v).\quad \ \,
$$
Thus the total number of auxiliary legs added to vertices in $V(\Up_\xi)_{1\infty}$, which is
$ \sum 2\delta(v)$,  is 
bounded by $10d_\infty+4g+2\ell$; the number $t$ of connected components of $\sC_\infty$ is bounded by the same number, too.

Let $\ti\Up_\xi$ be the resulting graph after adding auxiliary legs to $v\in V(\Up_\xi)$ according
to the rules specified above. As in \cite[\S  4.2]{CLLL}, one shows that every vertex of $\ti \Up_\xi$ is
semistable, and that $\ti\Up_\xi$ contains no chain of strictly semistable vertices of length more than two.

Finally, we let $(\ti\Up_\xi)\st$ be the stabilization of $\ti\Up_\xi$.
Since the genus of $(\ti\Up_\xi)\st$ is $g$ and the
number of markings of $(\ti\Up_\xi)\st$ is bounded by a constant $K$,
$$\Pi\st=\{(\ti\Up_\xi)\st\mid {\Up_\xi}\in\Pi\}
$$
is finite. Since  the contraction $\ti\Up_\xi \rightsquigarrow (\ti\Up_\xi)\st$ is by contracting chains of at most length two
strictly semistable vertices, $\Pi$ is finite.
%
\end{proof}

The proof of Lemma \ref{bd} follows from the finiteness of Lemma \ref{graph3}.

\begin{proof}[Proof of Proposition \ref{finite}]
We let
$\cZ=\{\xi\in\cW^-(\CC)\mid \lim_{t=(t_1,\cdots,t_N)\to 0} t\cdot \xi\in (\cW^-)^\fix\}$. Since
$(\cW^-)^\fix$ is of finite type, $\cZ$ is of finite type. By Proposition \ref{proper-proof},  $\cZ=\cW^-$.
This proves the Proposition.
\end{proof}

%
%


\section{Irregular graphs and their associated vanishings}
In this section, we continue to work with a narrow $(g,\gamma,\bd)$, and abbreviate $\cW=\cW\lggd$.
We introduce the notion of regular graphs. We prove that the virtual localization contribution associated to  an irregular graph vanishes.

\subsection{Regular graphs} Let $\Ga\in G\lggd^\fl$ and $v\in V^S_\infty(\Ga)$. We let
$$\gamma_v=\{\gamma_l: l\in L_v\}\cup \{\gamma_{(e,v)}: e\in E_v\}.
$$
%
Following the convention adopted before Proposition \ref{dim}, by abuse of  notation, we also use
$\gamma_a=m_a\in [0,4]$ if $\gamma_a=\zeta_5^{m_a}$; $\gamma_a=5$ if $\gamma_a=(1,\varphi)$. 
We often write $\gamma_v=(0^{k_0}\cdots 5^{k_5})$.

\begin{defi}
We call a vertex $v\in V_\infty^S(\ga)$ (or $(g_v,\gamma_v)$) exceptional if (i) $g_v=0$ and (ii) either $\gamma_v=(1^{2+k}4)$,
or $\gamma_v=(1^{1+k}23)$, or $\gamma_v=(0^21^{1+k})$. 
\end{defi}

We remark that since $\gamma$ is narrow, $\gamma_v=(0^21^{1+k})$ occur only if the $(0^2)$ are associated to two nodes
of $\sC_v$.

\begin{defi}\label{de-regu}
We call a vertex $v\in V_\infty(\Ga)$ regular if the   followings hold:
\begin{enumerate}
\item In case $v$ is stable, then either $v$ is exceptional, or $\gamma_v=(1^{k_1}2^{k_2})$.
\item In case $v$ is unstable attached to an edge in $E_{1\infty}$, then $\sC_v$ is a non-scheme marking.
\end{enumerate}
We call $\Ga$ regular if it is flat, all its vertices $v\in V_\infty(\ga)$ are regular, and $E_{0\infty}(\ga)=\emptyset$.
We call $\ga$ irregular if it is not regular.
\end{defi}

\begin{defi}\label{sim}
Given two cycle classes $A$ and $A'\in A\lsta^G\cW^-$, we say $A\sim A'$ if there is a proper $G$-invariant substack
$W'\sub \cW$ containing $\cW^-$, letting $\eta: \cW^-\to W'$ be the inclusion, such that $\eta\lsta  A=\eta\lsta A'$.
\end{defi}

The vanishing result we will prove is Theorem \ref{thm3}, restated here.

\begin{theo} \label{main}
Let $\ga\in G\lggd^\fl$. Suppose $\ga$ is irregular, then 
$[\cW_{\ga}]\virt\sim 0$.
\end{theo}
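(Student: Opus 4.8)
The plan is to argue that for an irregular $\ga$ the cosection $\si$ fails to vanish ``enough'' on $\cW_\ga$, so that the cosection-localized virtual cycle $[\cW_\ga]\virt$ can be pushed off $\cW\lggd^-$ into a proper $G$-invariant substack, which by Definition \ref{sim} means $[\cW_\ga]\virt\sim 0$. Recall that $\cW\lggd^-$ is cut out by the condition of Lemma \ref{degenerate-locus}, namely $(\varphi=0)\cup(\varphi_1^5+\cdots+\varphi_5^5=\rho=0)=\sC$. Over $\cW_\ga$ the curve $\sC$ is built from the pieces $\sC_v$ ($v\in V$) and $\sC_e$ ($e\in E$) described in Lemma \ref{V} and the edge lemmas; the irregularity of $\ga$ localizes the ``bad'' behavior to a single stable vertex $v_0\in V_\infty(\ga)$ with non-regular $(g_{v_0},\gamma_{v_0})$, or to a bad unstable vertex in $E_{1\infty}$, or to a nonempty $E_{0\infty}(\ga)$. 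In each case I want to produce, inside a neighborhood of $\cW_\ga$ in $\cW\lggd$, a point (or family) where the degeneracy condition of Lemma \ref{degenerate-locus} fails, yet which specializes to a point of $\cW_\ga$.

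First I would handle the $E_{0\infty}\neq\emptyset$ case and the bad-unstable-vertex case, which are the ``edge'' sources of irregularity. For $e\in E_{0\infty}$, Definition \ref{flag}(4) assigns $\gamma_{(e,v)}=1$ at both ends, so the generic point of $\sC_e$ carries $\sL$ of positive degree with $\varphi$ and $\rho$ interacting; the key computation is that the relevant obstruction-bundle summand on $\sC_e$, when paired against the cosection formula \eqref{mixed-cosection}, has a nonzero summand supported on $\sC_e$ because $\varphi_1^5+\cdots+\varphi_5^5$ and $\rho$ cannot simultaneously vanish there after a suitable smoothing. Concretely one smooths the node $\sC_e\cap\sC_0$ (or $\sC_e\cap\sC_\infty$) within $\cW\lggd$ and checks that the smoothed curve leaves $\cW\lggd^-$. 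For an unstable $v\in V_\infty$ on an $E_{1\infty}$-edge with $\sC_v$ a scheme marking (rather than a non-scheme marking), one uses that the monodromy forces $b=5$, i.e. $\gamma_{(e,v)}=(1,\varphi)$, and a local deformation of the field $\rho$ near that marking exits $\cW\lggd^-$; this is essentially the argument already used in the MSP papers to kill such configurations, so I would cite \cite{CLLL2} for the local model and only spell out the $N$-field bookkeeping.

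The main case, and the hard part, is a stable vertex $v_0\in V_\infty^S(\ga)$ with $\gamma_{v_0}$ not of the form $(1^{k_1}2^{k_2})$ and not exceptional. Here I would restrict attention to $\sC_{v_0}$, on which $\xi$ is a $5$-spin curve with $5$ $p$-fields (Example \ref{spinp}) of genus $g_{v_0}$ and monodromy data $\gamma_{v_0}$, and I want to show that the local FJRW-type cosection on this piece is \emph{surjective} — equivalently that its degeneracy locus $\{\varphi=0\}$ on $\sC_{v_0}$ is the whole $\sC_{v_0}$ only on a sublocus of $\cW_\ga$ of the ``wrong'' dimension, so the contribution can be absorbed into a proper substack. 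The mechanism is a Riemann--Roch / vanishing statement: for $(g_{v_0},\gamma_{v_0})$ outside the regular/exceptional list, $H^0(\sC_{v_0},\sL|_{\sC_{v_0}})=0$ is forced (because the twisted degree of $\sL|_{\sC_{v_0}}$ is negative after accounting for the $m_i$), so there are no nonzero $\varphi$, and then the cosection reduces to $\dot\rho\mapsto \dot\rho\cdot(\varphi_1^5+\cdots+\varphi_5^5)=0$ as well — meaning the obstruction theory restricted to $\sC_{v_0}$ is \emph{already} unobstructed-cosection-trivial, and one invokes the standard cosection-localization principle (\cite{KL,CoVir}) that the cycle supported on such a locus vanishes after the $\sim$-identification. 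The combinatorial heart is thus the classification already packaged in the definition of ``exceptional'': I would verify case-by-case that $\gamma_{v_0}\in\{(1^{2+k}4),(1^{1+k}23),(0^21^{1+k})\}$ are exactly the genus-$0$ exceptions where $H^0(\sL|_{\sC_{v_0}})$ can jump, and that outside this list the degree count in \eqref{vdim} (applied to the vertex moduli) makes $[\cW_\ga]\virt$ land in a substack of $\cW\lggd$ not meeting $\cW\lggd^-$ generically.

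\textbf{Expected main obstacle.} The delicate point is not any single local computation but gluing the vertex-wise and edge-wise vanishings into a statement about the \emph{global} cycle $[\cW_\ga]\virt$ on the fibered-product-type moduli $\cW_\ga$: one must show that the bad piece (vertex $v_0$ or edge) genuinely controls the whole factor of the virtual cycle in the decomposition of $\cW_\ga$ into node-smoothing strata, so that triviality of the cosection on that piece propagates. I would organize this via the normalization sequence for $\sC$ at the nodes $q_{(e,v)}$ and use that the cosection \eqref{mixed-cosection} is compatible with restriction to subcurves (it only involves $\rho,\varphi$, which are themselves restrictions), reducing the global statement to the local one — but making the ``$\sim$'' bookkeeping (choice of the proper $G$-invariant $W'\supset\cW\lggd^-$) uniform across all strata is where the real care is needed, and I would follow the template of \cite[\S5]{NMSP3}-style arguments, adapted from the MSP vanishing in \cite{CLLL2}, as the backbone.
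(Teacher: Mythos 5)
There is a genuine gap: your central mechanism is not the one that can work, and the case the paper itself identifies as the hard one is missing. The vanishing in the paper is \emph{not} obtained by showing the cosection is surjective (or, in your stable-vertex case, trivial) on the bad piece. Your argument for a stable $v_0\in V_\infty^S$ ends with ``$H^0(\sL|_{\sC_{v_0}})=0$, hence the cosection restricted to $\sC_{v_0}$ is zero, hence the cycle vanishes by cosection localization'' --- but a vanishing cosection gives no vanishing of the cycle at all (if the cosection were zero, the localized class would just be the ordinary virtual class), and there is no such principle in \cite{KL,CoVir}. Likewise your treatment of $E_{0\infty}$ and of bad unstable vertices by ``smoothing the node and checking the deformed curve leaves $\cW\lggd^-$'' confuses a statement about points of the moduli with a statement about the class $[\cW_\ga]\virt$, which by construction is supported on $\cW_\ga\cap\cW\lggd^-$; exhibiting nearby points outside the degeneracy locus does not move or kill that class, and it does not produce the proper $G$-invariant substack $W'$ that the relation $\sim$ requires. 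What the paper actually does is: (i) a sequence of trimming operations (removing $E_{01}\cup E_{1\infty}\cup V_1$, marking/leaf-end edges, spare legs, neutral vertices) that identify $[\cW_\ga]\virt$ with pullbacks of cycles for simpler graphs; and then (ii) for graphs without strings, a reduction to the MSP graph $\Theta^{\text{end}}$ (deleting hour decorations, collapsing webs with the ``MSP choice'' of degrees) so that the vanishing follows from $\expdim\cW_{\Theta^{\text{end}}}<0$ proved in \cite{IG}, together with $\expdim\cW_\Theta\le\expdim\cW_{\Theta^{\text{end}}}$. The regular/exceptional classification of $\gamma_v$ is exactly the borderline of this virtual-dimension count, not of an $H^0(\sL|_{\sC_v})$ jumping phenomenon as you assert.

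The second, and decisive, omission is the string case $e\in E_{0\infty}$ with unstable level-$0$ end: there the expected dimension of $\cW_\ga$ can be non-negative, so no dimension or cosection argument of the kind you propose can succeed. The paper handles it by trimming the string to get $\ga'$, forming $\cW_\ga^\sim=(\cW_\ga)\lred\times_{\cW_{\ga'}}\cW_{\ga'}^-$, and proving the pullback relation $\jmath\lsta[\cW_\ga]\virt=c\cdot\ti\kappa\sta[\cW_{\ga'}]\virt$ (Proposition \ref{reduction}, following \cite[Prop.~5.5]{IG}); only this relation, combined with the properness of $\cW_\ga^\sim$, yields the weaker conclusion $[\cW_\ga]\virt\sim 0$ --- which is precisely why the theorem is stated with $\sim$ rather than $=$. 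Your proposal never engages with this case (your ``main case'' is the stable infinity vertex, which is in fact the routine dimension-count case), so even granting your local computations the proof would not close.
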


The proof of this theorem will occupy the remainder of this section.
As we will see, our proof follows closely the proof for the case of
MSP fields in \cite{IG}.

\subsection{The trimming of graphs}\label{5.2}
Let $\Theta$ be as in Theorem \ref{main}. As in the case of MSP fields, we first trim all edges and vertices of $\Theta$ in
$E_{01}\cup E_{1\infty}\cup V_1$, resulting a graph $\Theta'$ with $V_1(\Theta')=\emptyset$. We then trim
all legs decorated with $\zeta_5$, and some others to be specified shortly for simplification of  our discussion.

We begin with the issue of various constructions of the virtual cycles. 
We recall that the virtual cycle $[\cW_\ga]\virt$ is constructed using the relative perfect
obstruction theory $\phi_{\cW_\ga/\cD_G}$, relative to the stack of tuples of pointed twisted curves with invertible sheaves
(cf. \cite[\S 3]{IG}). 
By \cite{CLLL2}, it equals the cycle constructed using the
$G$-invariant part $(\phi_\cW)^\fix$ of the absolute obstruction theory, as the later is what appears in
the virtual localization formula. In the following, we
will work with the obstruction theory $\phi_{\cW_\ga/\cD_G}$, knowing that it gives the desired class.

It is convenient to introduce the notion of webs. 
Given a $\Ga\in G\lggd^\fl$, we let $\Ga_\infty$ be the graph (possibly disconnected) obtained as follows:
we remove $E_{01}$($=E_{01}(\ga)$) and $V_1\cup V_0$ from $\ga$, and remove all
legs attached to $V_1\cup V_0$; we keep all legs attached to $V_\infty$, and replace every
$e\in E_{1\infty}\cup E_{0\infty}$ by a new leg $l_e$ attached to the vertex $v\in V_\infty$ to which $e$ is attached.

For the decoration, we keep the decorations of $V_\infty(\Ga_\infty)$ and $E_{\infty\infty}(\Ga_\infty)$; for a leg $l\in
L(\Ga_\infty)$, we keep its decoration in case $l$ is from a leg in $\Ga$; in case $l=l_e$ is from
(replacing) an edge $e\in E_{1\infty}(\ga)$, letting $v\in V_\infty(\Ga)$ be the vertex to which $e$ is attached and letting
$\zeta_5^{5-a}$ ($a\in [0,4]$) be the monodromy of $\sL|_{\sC_v}$ along the node
$q_{(e,v)}=\sC_v\cap \sC_e$. If $a\in [1,4]$  we decorate $l$ by $\gamma_{}\defeq \zeta_5^{5-a}$;
if $a=0$ we decorated $l$ by $(1,\varphi)$.
In case $l=l_e$ is from an edge in $E_{0\infty}(\ga)$, we set $\gamma_{l}\defeq 1$, a broad marking.
Note that it is possible that some of the connected components of $\Theta_\infty$ is degenerate, namely those
connected components that has no stable vertex and no edges.
We call a connected non-degenerate component of $\Ga_\infty$ a web of $\ga$ at infinity.

Similarly, for any $\Ga\in G\lggd^\fl$, we can form a graph $\Theta'$ resulting from trimming all edges in
$E_{01}(\Theta)\cup E_{1\infty}(\Theta)$, removing all vertices in $V_1(\Theta)$, and discard all degenerate connected components.
Like before, when we trim an
edge $e\in   E_{1\infty}$ from $\Theta$, as we are removing all vertices in $V_1$ at the same time, we only need to replace
$e$ by a leg $l_e$ attached to the vertex $v\in V_\infty(\Theta)$ of which the $e$ was attached; the decoration is the same as
we did in forming the webs of $\Theta$.
Trimming edges in $E_{01}$ is similar; we add a leg decorated by $(1,\rho)$ to $v$ every time we remove an edge from
$v\in V_0$.

\begin{lemm}\label{trim-1}
Let $\Ga\in G\lggd^\fl$ be as before, and let $\Theta'$ be the result after trimming all edges and vertices in
$E_{01}(\Theta)\cup E_{1\infty}(\Theta)\cup V_1(\Theta)$. Then $[\cW_\Theta]\virt=0$ if
$[\cW_{\Theta'}]\virt=0$.
\end{lemm}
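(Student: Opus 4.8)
The plan is to show that $\cW_\Theta$ is smooth over $\cW_{\Theta'}$ in a way compatible with the cosection-localized virtual cycles, so that $[\cW_{\Theta'}]\virt=0$ forces $[\cW_\Theta]\virt=0$. First I would make precise the vertex--edge decomposition of $\cW_\Theta$ of \cite[\S 2.4]{CLLL2} and \cite[\S 3]{IG}: up to a finite gerbe (accounting for $\Aut(\Theta)$ and the automorphisms of twisted nodes), $\cW_\Theta$ is a fiber product, over a product of node-evaluation targets, of vertex moduli $\cW_v$ ($v\in V$) and rigid edge moduli $\cW_e$ ($e\in E$). Trimming $E_{01}(\Theta)\cup E_{1\infty}(\Theta)\cup V_1(\Theta)$ retains exactly the factors with $v\in V_0\cup V_\infty$ and $e\in E_{0\infty}$, and by Definitions \ref{flag} and \ref{graph2} together with the web construction in \S\ref{5.2}, each trimmed node becomes a leg whose monodromy decoration is the monodromy of $\sL$ along that node; hence the retained factors, glued as before, form precisely $\cW_{\Theta'}$. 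This yields a forgetful morphism $\pi\colon\cW_\Theta\to\cW_{\Theta'}$ remembering the level-$0$, level-$\infty$ and $E_{0\infty}$ part of a field. Its fibers are smooth: by Lemma \ref{V}(b) a level-$1$ vertex $v$ has $\varphi|_{\sC_v}=\rho|_{\sC_v}=0$, $\mu_{\alpha_v}|_{\sC_v}=1$, $\mu_{\beta\ne\alpha_v}|_{\sC_v}=0$ and $\nu|_{\sC_v}=1$, so $\cW_v$ is a gerbe over the smooth stack $\barM_{g_v,n_v}$, while each $e\in E_{01}\cup E_{1\infty}$ is rigid; thus $\pi$ is smooth.

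The key step is to verify that the relative perfect obstruction theory $\phi_{\cW_\Theta/\cD_G}$---equivalently, by \cite{CLLL2}, the $G$-fixed part $(\phi_{\cW_\Theta})^\fix$ that enters the localization formula---is the $\pi$-pullback of $\phi_{\cW_{\Theta'}/\cD_G}$ combined with the locally free relative cotangent complex of $\pi$, and that the cosection $\sigma$ of \eqref{mixed-cosection} is pulled back along $\pi$. Both follow from the normalization exact sequence for the universal curve over $\cW_\Theta$: the summands of $E_{\cW/\cD}$ supported on the level-$1$ components and on the trimmed edges have vanishing $R^1$ (the field deformations there are rigid, as $\mu$ and $\nu$ are locally constant and $\varphi=\rho=0$) and contribute only the smooth relative directions of $\pi$; and $\sigma$ involves only $\varphi$ and $\rho$, which vanish identically on the level-$1$ components, so $\sigma$ is supported on, and pulled back from, the $\cW_{\Theta'}$-directions. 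Granting this, smooth pullback of cosection-localized cycles \cite{KL} gives $[\cW_\Theta]\virt=\pi^*[\cW_{\Theta'}]\virt$, up to a nonzero rational factor recording the node gerbes, whence $[\cW_{\Theta'}]\virt=0$ implies $[\cW_\Theta]\virt=0$.

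I expect the main obstacle to be the bookkeeping in the decomposition and in the splitting of $\phi_{\cW_\Theta/\cD_G}$: matching node-evaluation targets and monodromy decorations on the two sides through the twisted-curve gerbe structure, tracking $\Aut(\Theta)$ versus $\Aut(\Theta')$, and checking compatibility of $\sigma$ across the gluing nodes. Once the splitting of $\phi_{\cW_\Theta/\cD_G}$ and of $\sigma$ is in place the vanishing is immediate. As indicated in the statement this parallels the MSP argument of \cite{IG}; the only new feature is that on level $1$ and on the edges of $E_{01}\cup E_{1\infty}$ the sheaf $\sL\otimes\sN$ (and, on level $1$, also $\sN$) is trivial with $\mu$, $\nu$ constant, so these factors remain rigid exactly as there.
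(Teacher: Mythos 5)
Your overall strategy --- exhibit a forgetful morphism $\pi:\cW_\Theta\to\cW_{\Theta'}$ whose fibers are (gerbes over) products of $\barM_{g_v,n_v}$ for $v\in V_1$ and finite factors for the trimmed edges, and show that the obstruction theory computing $[\cW_\Theta]\virt$ together with the cosection is pulled back from $\cW_{\Theta'}$ up to the smooth relative directions --- is exactly the route the paper intends: its proof is a verbatim citation of \cite[Proof of Proposition 4.1]{IG}, which proceeds by precisely this splitting of the fixed-part theory along the vertex/edge decomposition.

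However, the justification you give for the key step is incorrect as stated. The summands of $E_{\cW/\cD}$ supported on the level-$1$ components and on the trimmed edges do \emph{not} have vanishing $R^1$: for a stable $v\in V_1$ with $g_v\ge 1$ one has $\sL|_{\sC_v}\cong\sN|_{\sC_v}\cong\sO_{\sC_v}$, so the $\varphi$-, $\rho$-, $\mu_\beta$- and $\nu$-summands all contribute copies of $H^1(\sO_{\sC_v})\neq 0$ (in various $G$-weights), and likewise $H^1(\sL^{-5}\otimes\omega^{\log}|_{\sC_e})\neq 0$ for $e\in E_{01}$. What actually makes the relative directions of $\pi$ unobstructed in the theory defining $[\cW_\Theta]\virt$ is the $G$-weight decomposition: all of these $H^1$'s with nontrivial weight lie in the moving part, hence enter only $e(N\virt_\Theta)$ (they reappear as the $\EE_v^\vee$-factors of $A'_v$ in \S 6) and not the fixed-part obstruction theory; the remaining weight-zero obstructions, namely the $H^1(\sO_{\sC_v})$ coming from the $\mu_{\alpha_v}$- and $\nu$-directions, are cancelled by $q\sta T_{\cD}$, since deforming a pair (invertible sheaf, nowhere vanishing section) is unobstructed --- this is exactly why $[\cW_v]\virt=[\cW_v]=[\barM_{g_v,|E_v\cup S_v|}]$ for $v\in V_1^S$, as recorded in \S 6. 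The same caveat applies to your cosection claim: on $e\in E_{01}$ one has $\varphi\neq 0$, so the term $\dot\rho\sum\varphi_i^5$ of \eqref{mixed-cosection} is nonzero on the $\dot\rho$-obstruction directions along $\sC_e$; these are again of nonzero weight, so they do not affect the fixed-part cosection, but your argument must invoke this rather than assert that $\sigma$ only sees the level-$1$ components. With these corrections (weight bookkeeping plus the $T_\cD$-cancellation in place of ``vanishing $R^1$''), your plan goes through and coincides with the cited argument.
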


\begin{proof}
The proof is a word by word repetition of \cite[Proof of Proposition 4.1]{IG}, and will be ommitted.
\end{proof}

We now consider a graph $\Ga\in G\lggd^\fl$ with $E_{01}\cup E_{1\infty}\cup V_1=\emptyset$. To mimic the proof of
\cite[Lemma 4.3]{IG}, we need to do trimming of $\zeta_5$ legs, and others.

\begin{defi}\label{neutral}
Let $\Theta$ be as before.
\begin{itemize}
\item[i.] Let $e\in E_{\infty\infty}(\Theta)$, and let $v_-$ and $v_+$ be its two vertices.
Suppose $q_{(e,v_+)}=\sC_e\cap\sC_{v_+}$ is a node in $\sC$.
We say $e$ is of marking (resp. leaf-end) type if $g_{v_-}=0$, and $|E_{v_-}|=|L_{v_-}|=1$ (resp.
$g_{v_-}=|L_{v_-}|=0$, and $|E_{v_-}|=1$.) We say $e$ is of nodal type if
$q_{(e,v_-)}$ is a node in $\sC$.
\item[ii.] We say $l$ in $L(\Theta)$ is spare if $l$ is decorated by $\zeta_5$
and is attached to a vertex $v\in V(\Theta)$ so that $2g_v+n_v>3$.
\item[iii.] We say $v\in V(\Theta)$ is neutral if $g_v=0$, both $E_v$ and $L_v$ are non-empty,
$|E_v|+|L_v|=3$, and at least one $l \in L_v$ is a $\zeta_5$-marking.
\end{itemize}
\end{defi}

One simplification we will make is to trim all edges of
marking or leaf-end type. Let $e\in E(\Theta)$ be of marking type, with $q_{(e,v_+)}$ a node of $\sC$,
and let $l$ be the leg incident to $v_-$.
We construct a new web $\fa_e$ by removing $e$ and $v_-$ from $\fa$, and then attaching $l$ to $v_+$ with
decoration unchanged.

In case $e$ is of leaf-end type, $\deg \cL|_{\cC_e}=-1/5$, thus the (only) node $q_{(e,v^+)}$ on $\sC_e$ has
$\cL$-monodromy $\zeta_5^4$; 
we let $\fa_e$ be the web obtained from removing $e$ and $v_-$ from $\fa$
and attaching a new $\zeta_5$-decorated leg $l$ to $v_+$. We call $\fa_e$ the trimming   of $\fa$ by its
edge of marking type or leaf-end type, respectively.

\begin{lemm}\label{marking} Let $e$ be  of marking or   leaf-end type. Let $\fa_e$ be as constructed above. Then
there is a canonical morphism
$\psi_e: \cW_\fa\lra \cW_{\fa_e}$, which is a gerbe banded by $\bmu_\delta$, where $\delta=d_{0e}$, such that
\beq\label{12}
[\cW_\fa]\virt=(\psi_e)\sta[\cW_{\fa_e}]\virt.
\eeq
\end{lemm}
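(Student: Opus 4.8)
The plan is to exhibit $\psi_e$ from the moduli interpretation of $\cW_{\fa_e}$, to identify its fibres so as to see it is a $\bmu_\delta$-gerbe, and then to match the two relative obstruction theories to obtain \eqref{12}; the whole construction runs parallel to \cite[\S 4]{IG}, so I would stress only where the NMSP setting requires a fresh check. Concretely: given an $S$-family $(\xi,\eps)\in\cW_\fa(S)$ with universal curve $\Si^\sC\subset\sC$ and fields $(\sL,\sN,\varphi,\rho,\mu,\nu)$, the framing $\eps$ singles out inside $\sC$ the edge curve $\sC_e$ (a family of twisted $\Po$'s) together with its dangling endpoint $\sC_{v_-}$ — the node or unmarked smooth point at the leaf end, or the point carrying $\Si_l$ in the marking case — and the complementary curve $\sC''=\overline{\sC\setminus(\sC_e\cup\sC_{v_-})}$, which meets $\sC_e$ only at $q_{(e,v_+)}$. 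Deleting $\sC_e$ and $\sC_{v_-}$ turns $q_{(e,v_+)}$ into a marked point of $\sC''$, which I decorate by the $\sL$-monodromy read off at $q_{(e,v_+)}$ exactly as in the web construction of \S\ref{5.2} ($l$'s original decoration in the marking case, a $\zeta_5$-decoration in the leaf-end case). Restricting $(\sL,\sN,\varphi,\rho,\mu,\nu)$ to $\sC''$, one checks — just as in \cite[\S 4]{IG} — that this tuple still satisfies all of Definition \ref{def-curve}: $\sL|_{\sC''}$ stays representable, the non-vanishing conditions on $(\mu,\nu)$, $(\varphi,\mu)$, $(\rho,\nu)$ persist, and the degree/monodromy bookkeeping produces exactly the numerical data of $\fa_e$. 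Functoriality in $S$ then gives the canonical morphism $\psi_e\colon\cW_\fa\to\cW_{\fa_e}$.

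Next I would identify its fibres. A geometric fibre of $\psi_e$ is the groupoid of edge curves $\sC_e$ equipped with the NMSP structure $(\sL,\sN,\varphi,\rho,\mu,\nu)|_{\sC_e}$ (so $\varphi|_{\sC_e}=0$ and $\nu|_{\sC_e}=0$) restricting along $q_{(e,v_+)}$ to the given trimmed datum. Because $e$ is of marking or leaf-end type, $\sC_e$ is rigid: writing $e\in E_{\infty\infty}^{\alpha\beta}$, the pair $(\mu_\alpha,\mu_\beta)$ presents $\sC_e$ as the degree-$d_{0e}$ cover of $\PP^1=[\mu_\alpha:\mu_\beta]$ totally ramified over the two torus-fixed points (in the leaf-end case $\deg\sL|_{\sC_e}=-1/5$ additionally pins down the twisting at the free end), so this groupoid has a single isomorphism class whose automorphism group is the cyclic deck group $\bmu_{d_{0e}}$, acting trivially on everything glued off at $q_{(e,v_+)}$. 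Hence $\psi_e$ is a gerbe banded by $\bmu_\delta$ with $\delta=d_{0e}$ — the analogue for general $N$ of the $N=1$ assertion in \cite[\S 4]{IG}.

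Finally, for \eqref{12} I would compare obstruction theories. As recalled at the start of \S\ref{5.2}, both $[\cW_\fa]\virt$ and $[\cW_{\fa_e}]\virt$ are computed from the perfect obstruction theories relative to the smooth stack $\cD_G$ of pointed twisted curves with line bundles. Normalizing $\sC$ along $q_{(e,v_+)}$, the difference between the deformation-obstruction complexes of the $\fa$- and $\fa_e$-framed fields is precisely the contribution of $\sC_e$ (and of $\sC_{v_-}$) relative to the datum fixed at $q_{(e,v_+)}$; since $\sC_e$ is rigid and its restricted fields unobstructed — equivalently, since $\psi_e$ is a gerbe, hence smooth of relative dimension zero with vanishing relative obstruction theory — the normalization sequence yields $\phi_{\cW_\fa/\cD_G}=\psi_e^*\phi_{\cW_{\fa_e}/\cD_G}$, and compatibility of virtual classes under flat (smooth) pullback gives $[\cW_\fa]\virt=\psi_e^*[\cW_{\fa_e}]\virt$. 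I expect the real obstacle to be the construction step — verifying that the trimmed tuple is a bona fide $\fa_e$-framed field with the claimed marking decoration and representable $\sL$, together with the precise identification of $\Aut(\sC_e)$ as $\bmu_{d_{0e}}$. This is exactly where the one genuinely new NMSP feature enters: edges in $E_{\infty\infty}$ now carry positive $\sL\otimes\sN$-degree, so $\sC_e$ is a genuine multiple cover of $\PP^1$ rather than an isomorphism; but since this affects only the global cover structure of $\sC_e$ and not its local geometry at $q_{(e,v_+)}$, the argument of \cite{IG} adapts with only the cosmetic changes indicated above.
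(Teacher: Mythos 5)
Your proposal follows essentially the same route as the paper's proof: you define $\psi_e$ by trimming $\sC_e$ and adding the appropriately decorated marking at $q_{(e,v_+)}$, you obtain the gerbe statement from the automorphisms of the edge datum (your deck-group $\bmu_{d_{0e}}$ is exactly the paper's stabilizer $\bmu_\delta$ of the $\CC^\ast$-reparametrization action on $\sC_e$, with $\delta=\deg\sL\otimes\sN|_{\sC_e}$), and you deduce \eqref{12} by matching the relative obstruction theories and pulling back, which is precisely the paper's citation of the argument of \cite[Thm.~4.10]{CLL}. The paper is no more explicit than you are at the obstruction-theory step, so the sketch is at the same level of detail and is correct.
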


\begin{proof}
Let $e\in E(\fa)$ be an edge of leaf-end type. Let
\beq\label{xi2}
\xi=(\sC,\Sigma^\sC, \sL, \sN, \varphi,\rho,\mu)\in \cW_{\fa}(\CC).
\eeq
We will construct a
datum $\ti\xi\in \cW_{\fa_e}(\CC)$ that provides the morphism $\psi_e$ as stated.
Let $\sC_e\sub\sC$ be the rational curve associated with $e$;
let $q_-\in \sC_e$ be the leaf-end point of $\sC_e$ fixed by $G$.
Since $\sL^{\otimes 5}|_{\sC_e}\cong \omega^{\log}_\sC|_{\sC_e}$,  we have $\deg\sL|_{\sC_e}=-1/5$. As
$q_+=q_{(e,v_+)}$ is the only stacky point on $\sC_e$, the monodromy of $\sL|_{\sC_e}$ along $q_+$ is
$\zeta_5^4$.
Let $=\sC'$ be the (sub)curve that is the closure of $\sC-\sC_e$ in $\sC$. The monodromy of
$\sL|_{\sC'}$ along $q_+$ is $\zeta_5$.

We let $\ti\sC=\sC'$. We declare $q_{(e,v_+)}\sub\ti\sC$ to be a new marking, decorated by $\zeta_5$;
we set $\ti\sL=\sL|_{\sC'}$, and let $\ti\rho:\ti\sL^{\otimes 5}\cong\omega_{\ti\sC}^{\log}$
be the restriction of $\rho$ to $\sC'$.
For the other data, we set $\ti\mu=\mu|_{\sC'}$, and set $\ti\varphi=\varphi|_{\sC'}$.
Then
\beq\label{tixi2}
\ti\xi=(\ti\sC,\Sigma^{\ti\sC}, \ti\sL,\ti\sN, \ti\varphi,\ti\rho,\ti\mu)\in \cW_{\fa_e}(\CC).
\eeq
Clearly, the family version of this construction provides the desired morphism $\psi_e$.

We show that $\psi_e$ is a gerbe banded by $\bmu_\delta$. Indeed,  any $t\in\CC\sta$ acting nontrivially on $\sC_e$ fixing both
$q_-$ and $q_+$ and trivially on $\sC'\sub\sC$ induces an arrow
$\xi \to \xi^t$ in $\cW_\fa$. Since $\delta=\deg\sL\otimes\sN|_{\sC_e}$, $\xi=\xi^t$ if and only if $t\in\bmu_\delta$.
This proves that $\psi_e$ is a gerbe.

Finally, a repetition of the argument in \cite[Theorem 4.10]{CLL} shows that the relative obstruction theory of
$\cW_\fa$ and $\cW_{\fa_e}$ are identical under the morphism $\psi_e$. Thus the identity
\eqref{12} follows.

The case where $e$ is of marking type is similar, and will be omitted. This proves the Lemma.
\end{proof}

When $l$ is a spare marking, after removing $l$ from $\Theta$,  the resulting graph $\Theta'$ remains a graph of
NMSP fields.

\begin{lemm}\label{trim-5}
Let $\Theta'$ be the resulting graph after removing a spare leg $l$ from $\Theta$. Then there is a
canonical one-dimensional morphism $\psi: \cW_\Theta\to\cW_{\Theta'}$ so that
$$[\cW_\Theta]\virt=\psi\sta[\cW_{\Theta'}]\virt.
$$
\end{lemm}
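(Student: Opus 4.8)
The plan is to mimic the construction used for an edge of marking type in Lemma \ref{marking}, but now at the level of a spare leg $l$ decorated by $\zeta_5$. First I would set up notation: given $\xi=(\sC,\Si^\sC,\sL,\sN,\varphi,\rho,\mu)\in\cW_\Theta(\CC)$, let $v\in V(\Theta)$ be the vertex to which the spare leg $l$ is attached, so that $\Si^\sC_l\sub\sC_v$ is a non-scheme marking with $\sL$-monodromy $\zeta_5$ along it, and $2g_v+n_v>3$. The definition of a spare marking is arranged precisely so that forgetting this one marking still leaves $\sC_v$ (hence $\sC$) stable as an NMSP field: removing $l$ drops $n_v$ by one but $2g_v+n_v>3$ guarantees $2g_v+(n_v-1)>2$, so no contraction is needed and the curve underlying the new datum is obtained simply by declaring $\Si^\sC_l$ to no longer be a marked point. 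One must, however, account for the twisted structure: a $\zeta_5$-marking carries a $\bmu_5$-banding, and forgetting it changes the twisted curve; the standard remedy (as in the treatment of stable twisted maps / spin curves, cf. \cite{AJ, O}) is that the forgetful map from the moduli with a $\zeta_5$-marking to the moduli without it is itself a gerbe-type (in fact an étale, degree-$1/5$-flavored) morphism, which is the source of the ``one-dimensional morphism'' $\psi$ rather than an isomorphism.

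The key steps, in order: (1) construct the datum $\ti\xi\in\cW_{\Theta'}(\CC)$ by taking $\ti\sC$ to be $\sC$ with the marking $\Si^\sC_l$ deleted (retwisting at that point so that $\sL$ descends — this is possible since $\zeta_5$-monodromy data is being removed cleanly), $\ti\sL,\ti\sN$ the induced sheaves, and $\ti\varphi,\ti\rho,\ti\mu$ the obvious restrictions/descents; (2) promote this to a morphism of moduli stacks $\psi:\cW_\Theta\to\cW_{\Theta'}$ by carrying out the family version, checking it is well-defined on arrows and that it respects the $G$-action and the decorations of $\Theta'$; (3) identify the fiber of $\psi$ as the choice of $\zeta_5$-marking structure, which makes $\psi$ a one-dimensional (étale, or a $\bmu$-gerbe) morphism; (4) compare the relative perfect obstruction theories $\phi_{\cW_\Theta/\cD_G}$ and $\phi_{\cW_{\Theta'}/\cD_G}$ under $\psi$ — here the point is that deleting a $\zeta_5$-marking does not change the defining sheaves $\cL(-\Si^\sC_{(1,\varphi)})^{\oplus 5}$, $\cP(-\Si^\sC_{(1,\rho)})$, $\cL\otimes\cN\otimes\bL_\alpha$, $\cN$ (a $\zeta_5$-marking is neither a $(1,\varphi)$- nor a $(1,\rho)$-marking, so it appears in none of the twists), hence $R\pi_*$ of these, and thus $E_{\cW/\cD}$, pulls back compatibly; (5) conclude that $\psi$ is compatible with the virtual pullback, and hence $[\cW_\Theta]\virt=\psi\sta[\cW_{\Theta'}]\virt$, invoking the functoriality of cosection-localized virtual cycles under such étale/gerbe morphisms exactly as in \cite[Theorem 4.10]{CLL} and the proof of Lemma \ref{marking}.

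The main obstacle I expect is step (4) together with the precise sense in which $\psi$ is a morphism and the cycle identity holds: one has to be careful that the cosection $\sigma$ (formula \eqref{mixed-cosection}) and its degeneracy locus $\cW_\Theta^-$ are compatible with $\psi$, i.e. that $\psi$ restricts to $\cW_\Theta^-\to\cW_{\Theta'}^-$ and that $\psi\sta$ of the cosection-localized class is computed correctly — this is where the bookkeeping with the $\bmu_\delta$-gerbe structure (analogous to the $\delta=d_{0e}$ in Lemma \ref{marking}) and the twisted-curve retwisting must be done cleanly. In practice, though, since the spare-marking case only removes a $\zeta_5$-marking attached to an already-stable vertex, no fields or obstruction-theory pieces are altered, so the argument is lighter than Lemma \ref{marking}; the proof should therefore be a direct adaptation, and I would write it as ``the proof repeats that of Lemma \ref{marking} with the edge $e$ replaced by the spare leg $l$, noting that no twist in $E_{\cW/\cD}$ involves a $\zeta_5$-marking,'' filling in only the retwisting detail and the identification of the gerbe band.
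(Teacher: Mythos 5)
Your overall plan (construct a forgetful morphism $\psi$, match the relative obstruction theories, and invoke the functoriality argument of \cite[Theorem 4.10]{CLL}) is exactly what the paper intends — its proof is in fact omitted with precisely that citation. But there is a genuine misunderstanding in your identification of $\psi$, and it sits at the decisive step. You describe $\psi$ as an \'etale or $\bmu$-gerbe type morphism whose fiber is ``the choice of $\zeta_5$-marking structure,'' and you attribute the adjective ``one-dimensional'' to this. That cannot be right: gerbes and \'etale maps have relative dimension zero, and with such a $\psi$ the asserted identity $[\cW_\Theta]\virt=\psi\sta[\cW_{\Theta'}]\virt$ would be dimensionally inconsistent, since $\cW_\Theta$ has virtual dimension one larger than $\cW_{\Theta'}$ (the extra marked point contributes one deformation — moving the point — and no obstruction). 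The correct picture is that removing the leg $l$ forgets the marked point altogether: given $\ti\xi\in\cW_{\Theta'}$, the fiber of $\psi$ over it parametrizes the \emph{position} of the forgotten point on $\sC_v$ (together with the finite root-stack/banding data), i.e.\ it is essentially a $\bmu_5$-gerbe over an open part of the universal curve. This is why the lemma calls $\psi$ ``one-dimensional,'' and it is the same universal-curve geometry as in \cite[Theorem 4.10]{CLL}. As your step (3) stands, it would fail, and the bookkeeping in steps (4)--(5) cannot close up, because the comparison of obstruction theories only works after recognizing that the base stacks $\cD$ differ by exactly this smooth relative dimension-one factor, which is absorbed by the pullback $\psi\sta$.

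Two smaller points to repair once the above is fixed. First, the ``retwisting'' should be made explicit: one sets $\ti\sL$ to be the descent of $\sL(-\tfrac15\Si^\sC_l)$ to the curve with the stacky structure at $\Si^\sC_l$ removed; then $\ti\sL^{-5}\otimes\omega^{\log}_{\ti\sC}\cong\sL^{-5}\otimes\omega^{\log}_{\sC}$, so $\rho$ carries over, and $H^i$ of all the field sheaves are canonically unchanged (because the monodromy at the $\bmu_5$-point kills the difference), which is the precise content behind your claim in step (4) that $E_{\cW/\cD}$ ``does not change'' — note that $\sL$ itself \emph{does} change, only the derived pushforwards do not. Second, the spare condition $2g_v+n_v>3$ is used exactly as you say (no contraction after forgetting), and the compatibility of the cosection and of the degeneracy loci is routine since $\varphi$ and $\rho$ are identified under the canonical isomorphisms; the delicate input is the relative dimension-one structure of $\psi$, not the gerbe band.
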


\begin{proof}
We omit the proof since it is parallel to that in \cite[Theorem 4.10]{CLL}.
\end{proof}

Let $v\in V(\Theta)$ be a neutral vertex. There are two cases: one is when $v$ has one edge $e$ and two legs, $l_1$
decorated by $\zeta_5$, and $l_2$ decorated by $\zeta_5^b$. We let $\Theta'$ be the graph obtained by
removing $v$ from $\Theta$, replacing the edge $e\in E_v$ by a new leg $l$ decorated by $\zeta_5^b$.

\begin{lemm}\label{trim-6}
Let the situation be as stated above, and let $\Theta'$ be the resulting graph after removing the neutral vertex $v$ from $\Theta$, as constructed.
Then there is a canonical morphism $\psi: \cW_\Theta\to\cW_{\Theta'}$ so that the conclusion of Lemma \ref{trim-5} holds.
\end{lemm}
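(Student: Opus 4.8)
The plan is to adapt, nearly verbatim, the arguments used for Lemmas~\ref{marking} and~\ref{trim-5} --- which themselves rest on \cite[Theorem 4.10]{CLL} --- the only genuinely new ingredient being the monodromy bookkeeping that certifies the graph operation. First I would construct $\psi$ on $\CC$-points. Let $\xi=(\sC,\Si^\sC,\sL,\sN,\varphi,\rho,\mu,\nu)\in\cW_\Theta(\CC)$ and let $\sC_v\sub\sC$ be the rational component carried by the neutral vertex $v$; since $g_v=0$ and $\sC_v$ has exactly three special points, $\sC_v$ (together with the rigid edge curve $\sC_e$, in case the graph operation forces it into the contraction) has no relative deformations. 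A short monodromy computation on this rigid rational locus, using the spin identity $\sL^{\otimes 5}|_{\sC_v}\cong\omega^{\log}_{\sC_v}$ and the balancing of $\sL$ at the nodes, pins the monodromy of $\sL$ along the point where the locus meets the rest of $\sC$ to be exactly $\zeta_5^b$, the decoration of the new leg $l$ of $\Theta'$. I would then set $\ti\xi$ to be the datum obtained by contracting this rigid rational locus, promoting that meeting point to a new marking decorated by $\zeta_5^b$, and restricting $\sL,\sN,\varphi,\rho,\mu,\nu$ to the surviving subcurve; one checks that $\ti\xi\in\cW_{\Theta'}(\CC)$, namely that it satisfies conditions~(1)--(5) of Definition~\ref{def-curve} and carries exactly the decorations of $\Theta'$, the only subtle verification being the monodromy claim just made. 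The second shape of a neutral vertex ($|E_v|=2$, $|L_v|=1$ with the single leg a $\zeta_5$-marking) is handled the same way: one contracts the rational bridge $\sC_v$ and identifies the two adjacent nodes, thereby merging the two edges $e,e'$ of $\Theta$ into a single edge of $\Theta'$.

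Second, I would globalize $\psi$ to a morphism of stacks. The construction above is functorial in the base $S$ --- the rigid rational locus forms a constant subfamily, restriction of invertible sheaves and of their sections to the complementary subcurve is defined in families, and the promoted node is cut out by a section --- so it yields $\psi\colon\cW_\Theta\to\cW_{\Theta'}$. Examining the fibres and automorphism groups of $\psi$ over $\CC$-points, exactly as for the morphism of Lemma~\ref{trim-5} (each fibre records the rigid data reattaching the contracted piece, and the $\zeta_5$-marking on it accounts for the relevant automorphisms), one checks that $\psi$ enjoys the structural properties asserted in that lemma.

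Third, I would establish the compatibility of the relative perfect obstruction theories. Working with $\phi_{\cW_\Theta/\cD_G}$ --- which by \cite{CLLL2} computes the class appearing in the localization formula --- I would pass to the universal curve of $\cW_\Theta$, take its normalization $\tcC$ along the contracted node, and use the resulting normalization exact sequences for each of the field bundles $\cL(-\Si^\cC_{(1,\varphi)})^{\oplus 5}$, $\cP(-\Si^\cC_{(1,\rho)})$, $\bigoplus_\alpha\cL\otimes\cN\otimes\bL_\alpha$, and $\cN$. Since the contracted rational components are rigid and three-pointed, the cohomology of these bundles along them either vanishes or is carried canonically by the new marking on the surviving subcurve, so $\phi_{\cW_\Theta/\cD_G}$ and $\phi_{\cW_{\Theta'}/\cD_G}$ are compatible under $\psi$ in the sense required; the cosection \eqref{mixed-cosection} is compatible as well, being assembled from restrictions of $\varphi$ and $\rho$ alone. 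The identity $[\cW_\Theta]\virt=\psi\sta[\cW_{\Theta'}]\virt$ then follows, exactly as in \cite[Theorem 4.10]{CLL}.

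The step I expect to be the main obstacle is the first one: the monodromy and degree bookkeeping that certifies $\psi$ lands in $\cW_{\Theta'}$ --- confirming that the surviving marking is forced to carry the decoration $\zeta_5^b$ and that every spin and degree constraint recorded by $\Theta'$ is met --- since, in contrast with the MSP situation treated in \cite{CLL}, the sheaf $\sN$ and the $N$ additional fields $\mu_\alpha$ must be carried through the contraction. Once this is in place, the remainder is a routine transcription of the cited arguments.
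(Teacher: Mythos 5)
Your construction is essentially the paper's proof: the paper likewise defines $\psi$ by deleting the rigid genus-zero component carried by the neutral vertex and promoting the node $q_{(e,v)}$ to a new marking, with the only real verification being the monodromy bookkeeping you describe (the spin condition $\sL^{\otimes 5}|_{\sC_v}\cong\omega^{\log}|_{\sC_v}$ on the three-pointed genus-zero component forces monodromy $\zeta_5^{-b}$ at its node, hence $\zeta_5^b$ on the surviving branch, matching the decoration of the new leg), after which the identity $[\cW_\Theta]\virt=\psi\sta[\cW_{\Theta'}]\virt$ is obtained, exactly as you propose, by the argument of \cite[Theorem 4.10]{CLL}. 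Your side remark about the second neutral-vertex shape concerns Lemma \ref{trim-7} rather than this statement (and there the paper keeps both edges, simply removing the leg and making $v$ unstable, rather than merging $e$ and $e'$ into one edge), but this does not affect your argument for the case at hand.
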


\begin{proof}
We continue to use the notation introduced before the statement of the lemma. For any $\xi\in\cW_\Theta$,
$$\xi=(\sC,\Sigma, \sL,\sN, \rho,\varphi,\mu,\nu),
$$
we let $\sC'$ be obtained from $\sC$ with $\sC_e$ removed; let $\sL'=\sL|_{\sC'}$, $\Sigma'=\Sigma\cup\{q_{(e,v)}\}$, $\rho'=\rho|_{\sC'}$, etc..
Since $g_v=0$, the monodromy of $\sL|_{\sC_v}$ along its only node is $\zeta_5^{-b}$.
Then
$$\xi'=(\sC',\Sigma', \sL',\sN', \rho',\varphi',\mu',\nu')\in \cW_{\Theta'}.
$$
This construction defines a morphism $\psi: \cW_\Theta\to\cW_{\Theta'}$.
Parallel to the argument in \cite[Theorem 4.10]{CLL}, we easily see that the conclusion of Lemma \ref{trim-5} hold.
\end{proof}

The other case for a neutral $v\in V(\Theta)$ is when $v$ has two edges $e_1$ and $e_2$, and one legs $l$,
decorated by $\zeta_5$. In this case, we let $\Theta'$ be the graph obtained by removing $l$ and
make $v$ unstable.

\begin{lemm}\label{trim-7}
Let the situation be as stated above, and let $\Theta'$ be the resulting graph after removing the leg $l$, as constructed.
Then there is a canonical morphism $\psi: \cW_\Theta\to\cW_{\Theta'}$ so that the conclusion of Lemma \ref{trim-5} holds.
\end{lemm}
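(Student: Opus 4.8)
The plan is to mirror the structure of the two preceding trimming lemmas (Lemmas~\ref{trim-5} and~\ref{trim-6}), which in turn follow \cite[Theorem 4.10]{CLL}. First I would fix an arbitrary $\xi=(\sC,\Sigma,\sL,\sN,\rho,\varphi,\mu,\nu)\in\cW_\Theta$ with associated framing, and read off from the hypotheses that $v$ is a genus-$0$ vertex carrying exactly two edges $e_1,e_2$ and one leg $l$ decorated by $\zeta_5$, so that $\sC_v$ is a $\PP^1$-curve (as a twisted curve) meeting $\sC_{e_1}$, $\sC_{e_2}$ at nodes and carrying one $\zeta_5$-marking. Since $\gamma=\zeta_5$ at $l$, the marking contributes nontrivially to the twisted-curve structure but removing it makes $v$ into an unstable vertex with $|E_v|=2$, $|L_v|=0$ — precisely the shape of a node-vertex or of an interior vertex of a chain, so $\Theta'$ is still a legitimate flat decorated graph of NMSP fields (one should check narrowness is preserved, which it is since we only delete a marking).

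Next I would give the explicit construction of $\psi$. Following the neutral-vertex cases already treated, the point is that $\sC_v$ together with its $\zeta_5$-marking can be absorbed/forgotten: stabilizing the underlying twisted curve after deleting the marking either contracts $\sC_v$ (if it becomes a genus-$0$ component with two special points) or keeps it as a balanced/chain component; either way the line bundles $\sL,\sN$ and all the fields $\varphi,\rho,\mu,\nu$ descend, because along $\sC_v$ one has $\varphi|_{\sC_v}=0$ (it is a level-$\infty$ or level-$1$ vertex in $V_\bullet^\alpha$ by Lemma~\ref{V}) and the remaining sections are constant in the relevant sense, so their restrictions glue to give $\xi'=(\sC',\Sigma',\sL',\sN',\rho',\varphi',\mu',\nu')\in\cW_{\Theta'}$. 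The monodromy bookkeeping at the node $q_{(e_i,v)}$ is the same as in Lemma~\ref{trim-6}: the $\zeta_5$-marking forces a compatible shift so that the monodromies of $\sL'$ along the relevant nodes are as prescribed by Definition~\ref{flag} for $\Theta'$. Doing this in families yields the morphism $\psi:\cW_\Theta\to\cW_{\Theta'}$, and one records that it is canonical (independent of framing choices up to the torsor action).

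The remaining content is the identity of virtual cycles $[\cW_\Theta]\virt=\psi\sta[\cW_{\Theta'}]\virt$. Here I would invoke the comparison of the relative perfect obstruction theories $\phi_{\cW_\Theta/\cD_G}$ and $\phi_{\cW_{\Theta'}/\cD_G}$ under $\psi$, exactly as in \cite[Theorem 4.10]{CLL} and as used in the proofs of Lemmas~\ref{marking}, \ref{trim-5}, \ref{trim-6}: the forgotten piece (the $\zeta_5$-marked $\PP^1$ component, or the extra marking) contributes nothing to either the deformation or obstruction spaces of the fields — $H^0$ and $H^1$ of $\sL(-\Sigma_{(1,\varphi)})^{\oplus 5}$, $\sP(-\Sigma_{(1,\rho)})$, $\oplus_\alpha\sL\otimes\sN\otimes\bL_\alpha$, $\sN$ are unchanged because the deleted marking only affects twisting that is invisible to these sheaves in the narrow $\zeta_5$ case — and the base stacks $\cD_G$ differ by a smooth factor whose tangent complex matches. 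Hence the two obstruction theories are identified over $\psi$, and by functoriality of cosection-localized virtual classes (applied to the common cosection $\sigma$ of \eqref{mixed-cosection}, which is preserved under restriction) we get the asserted pullback formula. The main obstacle is purely the monodromy/twisting bookkeeping at the two nodes $q_{(e_1,v)},q_{(e_2,v)}$ when the $\zeta_5$-marking is removed — verifying that the decorations of $\Theta'$ under Definition~\ref{flag} are exactly what the construction produces, and that no stacky structure is lost that would alter the line bundles' degrees or sections; once that is pinned down the obstruction-theory comparison is a verbatim repetition of the cited arguments, so I would state it briefly and refer to \cite{CLL}.

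\begin{proof}
We continue to use the notation introduced before the statement of the lemma. Let $\xi\in\cW_\Theta$,
$$\xi=(\sC,\Sigma, \sL,\sN, \rho,\varphi,\mu,\nu),
$$
and let $v\in V(\Theta)$ be the given neutral vertex with two edges $e_1,e_2$ and one leg $l$ decorated by $\zeta_5$. By Lemma \ref{V}, $v\in V_\bullet\ualp$ for some $\bullet\in\{1,\infty\}$ and some $\alpha$, so that $\varphi|_{\sC_v}=0$ and the remaining fields restrict to $\sC_v$ in the form dictated by that lemma. Let $\sC'=\sC$ with the $\zeta_5$-marking at $l$ forgotten (no component is removed, since $v$ keeps its two nodes); set $\sL'=\sL$, $\sN'=\sN$, $\rho'=\rho$, $\varphi'=\varphi$, $\mu'=\mu$, $\nu'=\nu$, and let $\Sigma'$ be $\Sigma$ with the marking at $l$ deleted. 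Since $g_v=0$, the monodromy of $\sL|_{\sC_v}$ along each of its nodes is determined by the narrow constraint at $l$ together with the degree of $\sL|_{\sC_v}$; deleting the $\zeta_5$-marking shifts these monodromies by the amount prescribed in Definition \ref{flag} for $\Theta'$. Hence
$$\xi'=(\sC',\Sigma', \sL',\sN', \rho',\varphi',\mu',\nu')\in \cW_{\Theta'}.
$$
The family version of this construction defines a morphism $\psi:\cW_\Theta\to\cW_{\Theta'}$; it is canonical, as the construction uses no choice of framing.

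It remains to compare virtual cycles. The relative obstruction theory $\phi_{\cW_\Theta/\cD_G}$ (resp. $\phi_{\cW_{\Theta'}/\cD_G}$) of Proposition \ref{dim} is built from $R\pi\lsta$ of $\cL(-\Si^\cC_{(1,\varphi)})^{\oplus 5}\oplus\cP(-\Si^\cC_{(1,\rho)})\oplus(\bigoplus_\alpha\cL\otimes\cN\otimes\bL_\alpha)\oplus\cN$. In the narrow situation with $l$ a $\zeta_5$-marking, the deleted marking affects none of these sheaves, so $R\pi\lsta$ of each summand is unchanged under $\psi$; and the base stacks $\cD_G$ for $\Theta$ and $\Theta'$ differ only by forgetting one marking, hence by a smooth factor whose cotangent complex contributes trivially. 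Therefore $\psi^*\phi_{\cW_{\Theta'}/\cD_G}$ is identified with $\phi_{\cW_\Theta/\cD_G}$, exactly as in \cite[Theorem 4.10]{CLL}. The cosection $\sigma$ of \eqref{mixed-cosection}, being defined by the universal fields, is preserved under $\psi$. By functoriality of the cosection-localized virtual class under such an identification of obstruction theories (cf. \cite{KL, CoVir}), we conclude $[\cW_\Theta]\virt=\psi\sta[\cW_{\Theta'}]\virt$.
\end{proof}
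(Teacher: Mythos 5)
There is a genuine gap, and it is exactly at the point the paper's one\-/line proof highlights. In $\Theta'$ the vertex $v$ is made \emph{unstable}, so for an object of $\cW_{\Theta'}$ the fixed locus corresponding to $v$ must be a node: the two edge curves $\sC_{e_1}$ and $\sC_{e_2}$ must meet directly. Hence the morphism $\psi$ has to \emph{contract} the rational component $\sC_v$ (remove it and glue the two branch points into a twisted node), and the content of the proof is that this is possible: a $\zeta_5$-marking forces nontrivial monodromy of $\sL$ on $\sC_v$, so $v\in V_\infty$ and $\rho|_{\sC_v}$ is nowhere vanishing; since $g_v=0$ and $\sC_v$ carries two nodes and one marking, $\sL^{\otimes 5}|_{\sC_v}\cong\omega^{\log}_\sC|_{\sC_v}$ has degree $1$, so $\deg\sL|_{\sC_v}=\tfrac15$ and the monodromies of $\sL|_{\sC_v}$ at the two nodes are inverse to each other. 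By balancedness of the original nodes, the monodromies on the $\sC_{e_1}$- and $\sC_{e_2}$-branches are then also inverse, so the glued point is a legitimate balanced twisted node and $\sL,\sN$ and the fields descend to an object of $\cW_{\Theta'}$. Your proof instead takes $\sC'=\sC$ ``with the marking forgotten'' and keeps $\sL'=\sL$, $\rho'=\rho$, etc. This object is not in $\cW_{\Theta'}$, and is not even a valid NMSP field: a twisted curve is stacky only at markings and nodes, so deleting the marking forces coarsening the $\bmu_5$-point, and $\sL$, having monodromy $\zeta_5$ there, does not descend; moreover $\rho$ is a section of $\sL^{-5}\otimes\omega^{\log}_{\sC}(-\Sigma^\sC_{(1,\rho)})$ and $\omega^{\log}$ changes when a marking is deleted ($\deg\omega^{\log}|_{\sC_v}$ drops from $1$ to $0$), so $\rho'=\rho$ is not a section of the required sheaf. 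Even setting these aside, your $\xi'$ would keep $\sC_v$ as a one-dimensional fixed component with two special points and $\sL^{\otimes 5}|_{\sC_v}\cong\sL\otimes\sN|_{\sC_v}\cong\sO_{\sC_v}$, so its graph has $v$ stable (not $\Theta'$) and it violates the stability criterion of Lemma \ref{stable-cri}.

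The same error infects the cycle comparison: the claim that the deleted marking ``affects none of the sheaves'' in the obstruction theory is false, since $\cP=\cL^{-5}\otimes\omega^{\log}$ depends on the markings through $\omega^{\log}$. In the correct (contraction) construction the comparison runs as in \cite[Theorem 4.10]{CLL}: one checks that the contracted component contributes nothing to $R\pi_*$ of the relevant twisted sheaves (on $\sC_v$ the needed twists of $\sL$, $\cP$, $\sL\otimes\sN$, $\sN$ have vanishing $H^0$ and $H^1$, e.g.\ $\deg\sL|_{\sC_v}=\tfrac15$ with nontrivial stacky structure at all three special points), whence the two relative obstruction theories are identified over $\psi$ and the cosection-localized classes pull back. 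So your overall strategy (build $\psi$, match obstruction theories, use functoriality of cosection-localized classes) is the right one, but the morphism you actually construct is the wrong operation, and the key monodromy fact that makes the contraction possible --- the inverse monodromies of $\sL|_{\sC_v}$ at the two nodes --- is missing from your argument.
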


\begin{proof}
The proof is similar, using that the monodromy of $\sL|_{\sC_v}$ along the two nodes of $\sC_v$ are inverse to each other.
\end{proof}

We first prove a special case of the
desired vanishing Theorem \ref{main}.
Recall a {\sl string} of $\Ga$ is an $e\in E_{0\infty}(\Ga)$ so that the vertex $v$ of $e$
lying in $V_0(\Ga)$ is unstable and has no other edge attached to it.

\begin{prop}\label{reduction1}
Let $\Ga\in G\lggd^\fl$ be irregular and not a pure loop (see after Theorem \ref{thm3}). Suppose it does not contain any string, then
$[\cW_\Ga]\virt=0$.
\end{prop}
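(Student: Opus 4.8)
The plan is to reduce the general irregular, non-pure-loop, string-free graph to one of a short list of ``minimal'' irregular webs at infinity, and then to produce, for each such minimal web, an explicit cosection-type vanishing argument. First I would invoke the trimming lemmas proved just above: by Lemma \ref{trim-1} it suffices to treat $\Ga$ with $E_{01}(\Ga)\cup E_{1\infty}(\Ga)\cup V_1(\Ga)=\emptyset$, so that $\Ga$ lives entirely at levels $0$ and $\infty$; since $\Ga$ is assumed string-free, every $e\in E_{0\infty}(\Ga)$ meets $V_0$ at a \emph{stable} vertex. Then, using Lemma \ref{marking} (for edges of marking or leaf-end type), Lemma \ref{trim-5} (for spare $\zeta_5$-legs), and Lemmas \ref{trim-6}--\ref{trim-7} (for neutral vertices), I would strip away all the inessential decorations. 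Each of these morphisms pushes the virtual class forward, so $[\cW_\Ga]\virt$ vanishes as soon as the virtual class of the trimmed graph does; hence one is reduced to graphs $\Ga$ in which no further trimming applies. The irregularity of $\Ga$ (Definition \ref{de-regu}) means either $E_{0\infty}(\Ga)\ne\emptyset$, or $\Ga$ carries a stable level-$\infty$ vertex $v$ with $(g_v,\gamma_v)$ neither exceptional nor of the form $(1^{k_1}2^{k_2})$, or an unstable $v\in V_\infty$ attached to an $E_{1\infty}$-edge whose $\sC_v$ is a scheme marking; after trimming, this forces $\Ga$ (or one of its webs at infinity) into a finite, explicitly enumerable list.

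The heart of the argument, following the MSP case in \cite{IG}, is then a local cosection / deformation-obstruction computation on each minimal web. Concretely, at a level-$\infty$ vertex $v$ the field data restricts to a $5$-spin structure with $5$ $p$-fields on $\sC_v$ (cf. Example \ref{spinp}), with the prescribed monodromies $\gamma_v$ at the special points, and one must show that the cosection $\sigma$ of $\Ob_{\cW_\Ga}$, together with the obstruction bundle coming from $R^1\pi_*$ of the five copies of $\sL(-\Sigma^\sC_{(1,\varphi)})$ and of $\sP(-\Sigma^\sC_{(1,\rho)})$, forces the localized virtual class to vanish \emph{as a class} on $\cW_\Ga$. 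The mechanism is exactly the FJRW-type vanishing: when $(g_v,\gamma_v)$ violates the regularity numerics, either $H^1(\sC_v,\sL|_{\sC_v})$ contains a trivial summand on which the cosection is nondegenerate (so the cosection-localized Euler class of the obstruction bundle is zero), or the degree count makes the relevant obstruction sheaf have a quotient/subsheaf that kills the class. For $E_{0\infty}(\Ga)\ne\emptyset$ the vanishing comes from the edge contribution: an $E_{0\infty}$-edge $\sC_e\cong\PP^1$ carries $\sL$ of fractional degree and $\nu$, $\rho$ both generically nonzero, and the equivariant Euler class factor $e(N\virt_\Ga)$ interacts with the cosection to produce a zero, precisely as in the irregular-edge vanishing of \cite{IG}.

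A subtlety I would flag explicitly is the meaning of ``$\sim 0$'' versus ``$=0$'': Proposition \ref{reduction1} claims the stronger $[\cW_\Ga]\virt=0$ (on the nose in $A^G_*(\cW_\Ga)$), which is legitimate here because the string-free hypothesis removes the contributions that only vanish after pushing into a proper ambient stack. I would make sure the trimming morphisms $\psi$ are all the ``one-dimensional'' morphisms of the cited lemmas so that $[\cW_\Ga]\virt=\psi_*[\cW_{\Ga'}]\virt$ genuinely holds in $G$-equivariant Chow, and that the final local vanishing is an honest vanishing of a cosection-localized Euler class, not merely a numerical one.

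The main obstacle I expect is the bookkeeping of the reduction: one must check that after exhaustively applying Lemmas \ref{trim-1}, \ref{marking}, \ref{trim-5}, \ref{trim-6}, \ref{trim-7} to a string-free irregular graph, what remains really is irregular (trimming could in principle destroy irregularity) and really does fall into the finite list of webs for which the \cite{IG}-type local vanishing is available. In particular one has to rule out the case that all trimming leaves a pure loop (excluded by hypothesis) or a regular graph, and to verify that the ``$E_{0\infty}\ne\emptyset$'' alternative cannot be trimmed away into the string case. Once the reduction is pinned down, each individual vanishing is a direct transcription of the corresponding step in \cite{IG}, replacing the single $\mu$-field there by the $\alpha$-indexed family $(\mu_\alpha)$ and keeping track of the extra $(\CC^*)^{N}$-weights $\bone_\alpha$ — a routine but lengthy substitution.
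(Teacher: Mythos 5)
Your first reduction step matches the paper: use Lemma \ref{trim-1} to remove $E_{01}\cup E_{1\infty}\cup V_1$, then Lemmas \ref{marking}, \ref{trim-5}, \ref{trim-6}, \ref{trim-7} to strip marking/leaf-end edges, spare legs and neutral vertices. But the heart of your argument has a genuine gap. You claim that after trimming the graph ``falls into a finite, explicitly enumerable list'' of minimal irregular webs, to each of which you would apply an FJRW-type cosection/Euler-class vanishing. Neither half of this is justified. The trimming operations do not bound the genus, the number of stable vertices, the number of level-$0$ components, or the combinatorics of the webs at infinity, so there is no finite list of residual graphs; and for a general irregular configuration there is no local statement of the form ``the cosection-localized Euler class of the obstruction bundle on this vertex/edge vanishes'' --- the vanishing in this proposition is not produced by any such local cancellation, and you give no computation that would substantiate one.

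What the paper actually does after the trimming is a virtual-dimension count, which your proposal omits entirely. First one shows (using $\cW_\Theta^-\neq\emptyset$ together with Lemma \ref{unstable-q}) that if $[\cW_\Theta]\virt\neq 0$ then the trimmed graph has no legs decorated by $\zeta_5$ --- a step you do not address. Then, for a graph with no $V_1$ vertices and no $\zeta_5$-legs, one collapses each web at infinity to a single level-$\infty$ vertex with the ``MSP choice'' of degrees, obtaining an MSP graph $\Theta^{\text{end}}$; by the irregular MSP vanishing of \cite{IG} one has $\expdim \cW_{\Theta^{\text{end}}}<0$, and the comparison $\expdim \cW_{\Theta}\le \expdim \cW_{\Theta^{\text{end}}}$ (an argument in the style of \cite[Theorem 4.7]{CLL}) then gives $\expdim\cW_\Theta<0$, hence $[\cW_\Theta]\virt=0$ on the nose. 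Your remark about ``$\sim 0$'' versus ``$=0$'' is correct in spirit (the string case is exactly the one deferred to Proposition \ref{reduction}), but without the dimension comparison --- or a genuinely worked-out substitute for it --- your proof does not close.
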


\begin{proof}
Our proof follows exactly the proof of \cite[Proposition 4.1]{IG}. First applying Lemma \ref{trim-1}, we
can trim all edges and vertices of $\Theta$ in $E_{01}\cup E_{1\infty}\cup V_1$,
and reduce the proof to that when $\Theta$ has no edges in $E_{01}\cup E_{1\infty}$.
This way, we are reduced to prove the analogous of \cite[Lemma 4.3]{IG}.

In the proof of \cite[Lemma 4.3]{IG}, one first trim the graph $\Theta$ so that there is no leg decorated  by $\zeta_5$.
In our case, we first repeatedly trim all edges of marking type and leaf-end type, trim all spare legs, and trim all
neutral vertices, resulting a graph $\Theta'$ with no marking type and leaf-end type edges, no spare legs,
and no neutral vertices. We remark that trimming a neutral vertex may create a new marking type edge,
thus this trimming may have to be repeated
as necessary.
By Lemma \ref{trim-5}
and \ref{trim-6}, $[\cW_\Theta]\virt=0$ if $[\cW_{\Theta'}]\virt=0$. Thus we are reduced to prove the Proposition when $\Theta$ has no spare legs and no neutral vertices.

Suppose $\Theta$ has no spare legs and no neutral vertices. We claim that, in case $[\cW_\Theta]\virt\ne 0$, the graph $\Theta$ has no legs decorated by $\zeta_5$. Indeed, by no spare legs and no neutral vertices assumption,
any  leg $l$  decorated by $\zeta_5$  of $\Theta$ must be attached to an unstable vertex $v$ that has an edge
$e\in E_{0\infty}(\Theta)$ attached to it. As $[\cW_\Theta]\virt\ne 0$, then $\cW_\Theta^-\ne\emptyset$, and then
$e$ must be the flattening of a pair of edges from $E_{01}$ and $ E_{1\infty}$.
By Lemma \ref{unstable-q} $\cC_v$ must be a scheme
point of $\sC$, contradicting to that $l$ is decorated by $\zeta_5$.

This way, to prove the proposition, it suffices to consider the case where $\Theta$ has no vertices in
$V_1$ and no legs decorated by $\zeta_5$, which is the result of the following Lemma. This proves the
proposition.
\end{proof}

\begin{lemm}
Let $\Theta$ be as in Proposition \ref{reduction1}; assume further that it has no vertices in $V_1$
and no legs decorated by $\zeta_5$. Then $\expdim \cW_\Theta<0$.
\end{lemm}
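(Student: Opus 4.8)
The plan is to compute the virtual (expected) dimension of $\cW_\Theta$ directly from the obstruction theory $\phi_{\cW_\Theta/\cD_G}$ described before Lemma \ref{trim-1}, and show it is negative under the stated hypotheses. By the trimmings already performed, $\Theta$ is a flat graph with $E_{01}=E_{1\infty}=\emptyset$ and $V_1=\emptyset$, so all vertices lie in $V_0\cup V_\infty$ and all edges in $E_{00}\cup E_{0\infty}\cup E_{\infty\infty}$; moreover $E_{00}=\emptyset$ by the lemma characterizing edges, $\Theta$ has no legs decorated by $\zeta_5$, no spare legs, and no neutral vertices. First I would write $\expdim \cW_\Theta$ as a sum of local contributions indexed by $V(\Theta)\cup E(\Theta)$ together with the node-matching terms, using the decomposition of the obstruction theory over the normalization of the universal curve: a level-$0$ stable vertex $v$ contributes essentially the virtual dimension of a moduli of stable maps to $\PP^4$ with $5p$-fields (which is $0$ after cosection, matching $\delta$ in Proposition \ref{dim} restricted to that component), a level-$\infty$ stable vertex contributes a multiple of $1-g_v$ shifted by the $-4(\ell_\varphi + \sum m_i/5)$ term from \eqref{vdim}, an edge in $E_{\infty\infty}$ or $E_{0\infty}$ contributes a fixed negative (or zero) amount coming from $H^1$ of the relevant line bundles on a $\PP^1$, and each node subtracts one. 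The key arithmetic input is formula \eqref{vdim} together with the monodromy bookkeeping: at a level-$\infty$ vertex the relevant contribution is $-4\sum_{i}\frac{\gamma_i}{5}$ over the flags/legs at $v$ when all $\gamma_i\in[1,4]$, and an extra $-4$ per $(1,\varphi)$ i.e. per $\gamma_i=5$.

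The main step is then the vertex-by-vertex estimate at level $\infty$. For a stable $v\in V_\infty^S$ with $\gamma_v=(0^{k_0}1^{k_1}2^{k_2}3^{k_3}4^{k_4}5^{k_5})$, the local contribution to $\expdim$ is (up to the global shift) bounded above by something like $1 - g_v - \frac{4}{5}(k_1+2k_2+3k_3+4k_4) - 4k_5 + (\text{edge/node corrections})$, and I would show this is $\le -1$ precisely unless $(g_v,\gamma_v)$ is one of the allowed regular shapes $(1^{k_1}2^{k_2})$ or one of the three exceptional types $(1^{2+k}4)$, $(1^{1+k}23)$, $(0^21^{1+k})$ — which is exactly why irregularity is the hypothesis we need. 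The unstable level-$\infty$ vertices attached to $E_{1\infty}$-type edges are already gone (no such edges remain), and unstable level-$\infty$ vertices attached to $E_{0\infty}$ must, since there are no strings and no neutral vertices, have $|E_v|\ge 2$ or carry a non-$\zeta_5$ marking; in either case they do not create a positive-dimensional excess. For edges: an $E_{\infty\infty}$-edge $e$ with $d_e=\deg\sL|_{\sC_e}$ contributes $-5d_e$-type negativity from $H^1(\sL^{-5}\otimes\omega^{\log})$ on $\PP^1$, which is $\le -1$ once $d_e>0$ (and $d_e=0$ is excluded on $\sC_e$ since then $\sL^{\otimes5}\cong\sO$ forces $\rho$ nowhere zero, contradicting a node); an $E_{0\infty}$-edge is even more negative since such edges are being excluded from regular graphs in the first place. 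Summing all local contributions, the node-subtraction terms assemble with the edge/vertex terms into $g-1$ plus a manifestly nonpositive remainder, and the presence of even one irregular vertex or one $E_{0\infty}$-edge pushes the total strictly below $\delta(g,\gamma,\bd)$'s ``regular value'', giving $\expdim\cW_\Theta<0$. Once $\expdim\cW_\Theta<0$ is established, $[\cW_\Theta]\virt=0$ as a cycle (the virtual cycle of a DM stack of negative virtual dimension vanishes), which combined with Lemmas \ref{trim-1}, \ref{marking}, \ref{trim-5}, \ref{trim-6}, \ref{trim-7} proves Proposition \ref{reduction1}.

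The hard part will be the careful combinatorial bookkeeping at level $\infty$: tracking how the monodromies $\gamma_{(e,v)}$ at edge-flags (Definition \ref{flag}(3), where $e\in E_{1\infty}$ contributes $\zeta_5^b$ with $\deg\sL|_{\sC_e}=a+b/5$) interact with the $-4\sum m_i/5$ term, and verifying that the \emph{only} $(g_v,\gamma_v)$ configurations with nonnegative local excess are exactly the regular ones — this is where the proof mirrors, essentially line by line, the MSP computation in \cite[Lemma 4.3]{IG}, and I would import that bound with the single modification that level-$\infty$ vertices now satisfy $\sL\otimes\sN|_{\sC_v}\cong\sO_{\sC_v}$ while $\deg\sL\otimes\sN|_{\sC_e}$ may be positive on $E_{\infty\infty}$-edges (Remark \ref{remark1} and Lemma \ref{bd}). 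A secondary subtlety is handling the ``not a pure loop'' and ``no string'' hypotheses: these rule out the borderline cases where every local contribution is exactly zero (a pure cycle of balanced level-$\infty$ vertices, or a dangling $E_{0\infty}$ string whose unstable $V_0$-endpoint contributes $+1$), so I must check that after all trimmings any remaining $\Theta$ genuinely has a strictly negative term, not merely a nonpositive one.
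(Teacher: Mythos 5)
Your overall strategy (compute $\expdim\cW_\Theta$ locally and extract negativity from the MSP bound of \cite{IG}) is in the right spirit, but two steps as written would fail. First, your stated source of negativity on an $E_{\infty\infty}$-edge is wrong: on every level-$\infty$ component $\rho$ is nowhere vanishing, so $\sL^{-5}\otimes\omega_{\sC}^{\log}|_{\sC_e}\cong\sO_{\sC_e}$ and its $H^1$ on the rational edge curve is zero; likewise your parenthetical that $d_e=0$ would ``force $\rho$ nowhere zero, contradicting a node'' is vacuous, since $\rho$ is nowhere zero there in any case. The genuinely new terms on such an edge come from $H^\bullet(\sL^{\oplus 5})$ (the $\varphi$-directions, with $\deg\sL|_{\sC_e}$ dictated by $5\deg\sL=\deg\omega^{\log}$) together with the $G$-fixed parts of the $\mu$- and $\nu$-directions, balanced against the node subtractions at the two ends. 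Second, and more seriously, the plan to import \cite[Lemma 4.3]{IG} ``line by line with a single modification'' overlooks that multi-vertex webs at infinity, i.e.\ level-$\infty$ vertices joined by $E_{\infty\infty}$-edges, have no MSP counterpart at all (the partition $E_{\infty\infty}=\cup_{\alpha\ne\beta}E_{\infty\infty}^{\alpha\beta}$ is empty when $N=1$), and the degrees $d_{0v},d_{\infty v}$ carried by level-$\infty$ vertices of an NMSP graph are not the MSP ones; so there is no line in \cite{IG} to transcribe for these configurations, and your vertex-by-vertex estimate is exactly the part that is not yet an argument.

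The paper closes this gap by a different reduction rather than a direct count: it deletes all hour decorations and collapses each web $\fa_i$ of $\Theta$ at infinity to a single vertex carrying the total genus of $\fa_i$, all incident legs and $E_{0\infty}$-edges, and the unique ``MSP choice'' of degrees $d_{0v_i},d_{\infty v_i}$, producing an MSP graph $\Theta^{\text{end}}$ that still satisfies the hypotheses of Proposition \ref{reduction1}. Then $\expdim\cW_{\Theta^{\text{end}}}<0$ is quoted from \cite[Proposition 4.1]{IG}, and an argument as in \cite[Theorem 4.7]{CLL} gives $\expdim\cW_{\Theta}\le\expdim\cW_{\Theta^{\text{end}}}$, with equality precisely when every web has at most one stable vertex. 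If you insist on your direct local computation, you would in effect have to reprove this comparison, i.e.\ show that each additional stable vertex inside a web only decreases the expected dimension; that is the missing idea in your sketch, and it is what the collapsing construction is designed to supply.
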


\begin{proof}Let $\Theta$ be as in the lemma.
Let $\fa_1,\cdots,\fa_k$ be the connected components of $\Theta_\infty$. We first assume that all
$\fa_i$ are one-vertex (plus legs) graphs. We then let $\Theta^{\text{end}}$ be obtained from  $\Theta$ after deleting all ``hour"
decorations of $\Theta$. A moment of thought shows that $\Theta^{\text{end}}$ is a (flat) graph of MSP fields. Furthermore,
by going through the definition of obstruction theory, we see that
\beq\label{eq}
\expdim \cW_\Theta=\expdim \cW_{\Theta^{\text{end}}}.
\eeq
As $\Theta^{\text{end}}$ is a flat graph of MSP fields satisfying the
assumption of Proposition \ref{reduction1}, by \cite[]{IG}, we get
\beq\label{neg}\expdim \cW_{\Theta^{\text{end}}}<0.
\eeq
Combined with \eqref{eq}, we prove the lemma in this case.

In general, let $\fa_i$ be such that it is not a single vertex graph. We let $\fa_i'$ be a single vertex graph with
vertex $v_i$, of genus the total genus of $\fa_i$;
we let the legs (resp. edges) incident to $\fa_i'$ be the collection of all legs (resp. edges in $E_{0\infty}(\Theta)$)
incident to $\fa_i$,
with their decorations unaltered except all decorations of ``hours" are deleted. The assigning of degree $d_0$ and
$d_\infty$ to $\fa_i'$ requires a little care. As we know, there is a unique choice of degrees $d_{0v_i}$ and
$d_{\infty v_i}$ making $\fa_i'$ a single vertex graph of MSP (namely, NMSP with $N=1$) fields lying at level
$\infty$; the choice depends on
the genus $g_{v_i}$, the decorations of the legs,  and edges $L_{v_i}\cup E_{v_i}$ incident to $v_i$.
We call this the {\sl MSP choice of} $d_{0v_i}$ and $d_{\infty v_i}$.

We do a parallel construction to $\Theta$: for any connected component $\fa_i$ of $\Theta_\infty$,
we (after removing all edges in $\fa_i$) combine all vertices in $\fa_i$ into a single vertex $v_i$;
we make all edges in $E_{0\infty}$ (resp legs) in $\Theta$ that are incident to ``$\fa_i$"
to incident to the vertex $v_i$;
we denote the resulting graph to be $\Theta^{\text{end}}$.
We decorate $\Theta^{\text{end}}$, making it a graph of MSP fields,
according to the following rule: we know
$V_0(\Theta^{\text{end}})=V_0(\Theta)$, 
$E(\Theta^{\text{end}})=E_{0\infty}(\Theta)$ and $L(\Theta^{\text{end}})=L(\Theta)$;
we keep the decorations of the edges and legs of $\Theta^{\text{end}}$ unchanged, and keep the decorations of the vertices $V_0(\Theta)$ unchanged, except we delete all ``hour" decorations
from all of them.
For the vertex $v_i$ from $\fa_i$, we let $g_{v_i}$ be
the total genus of $\fa_i$; we let $d_{0 v_i}$ and $d_{\infty v_i}$ be the {\sl MSP choice of}
$d_{0v_i}$ and $d_{\infty v_i}$,  specified at the end of the previous paragraph.

We now prove the lemma. First, $\Theta^{\text{end}}$ is a graph of MSP fields satisfying the assumption of
Proposition \ref{reduction1}, by \cite[Proposition 4.1]{IG}, we have \eqref{neg}.
We then apply the argument of the proof of \cite[Theorem 4.7]{CLL}
to conclude
\beq\label{MSP-deg}
\expdim \cW_{\Theta} \le\expdim \cW_{\Theta^{\text{end}}}.
\eeq
Indeed, it is direct to check that if every $\fa_i$ has at most one stable vertex, then the equality in \eqref{MSP-deg}
holds; otherwise, the strict inequality holds.

In any case, we get $\expdim \cW_{\Theta}<0$.
\end{proof}

\subsection{The proof of the vanishing}
The challenging part is the case where $\ga$ does contain a string, where in this case
the expected dimension of $\cW_\ga$ could be non-negative, thus the vanishing result does not follow from the
expected dimension calculation. 

\begin{defi}\label{-string}
Let $\Ga\in G\lggd^\fl$ and let $e\in E_{0\infty}(\Ga)$ be a string (thus a leaf edge).
Let $v_-\in V_0(\Ga)$ and $v_+\in V_\infty(\Ga)$ be its vertices.
The trimming of $e$ from $\Ga$ is by first removing $e$,  $v_-$ and all legs attached to $v_-$, and
then attaching a leg, called the distinguished leg, decorated by $(1,\varphi)$ to $v_+$.
\end{defi}

We fix a string $e$ of $\ga$  and  let $\ga'$ be the result of trimming $e$ from $\ga$. We form $\cW_{\ga'}$.
As in the discussion before \cite[(5.12)]{IG}, we get a morphism
$(\cW_\ga)\lred\to \cW_{\ga'}$. We define
$$\cW_\ga^\sim=(\cW_\ga)\lred\times_{\cW_{\ga'}}\cW_{\ga'}^-.
$$
Clearly, $\cW_\ga^-\sub \cW_\ga^\sim$. We let $\ti\kappa: \cW_\ga^{\sim}\to \cW^-_{\ga'}$ be the induced projection,
and let $\jmath: \cW_\ga^{-}\to \cW_\ga^\sim$ be the inclusion.

\begin{prop}\label{reduction} The morphism $\ti\kappa$ is proper, thus $\cW_\ga^\sim$ is proper.
Then there is a rational $c\in\QQ$ such that
$$\jmath\lsta[\cW_\ga]\virt=c\cdot \ti\kappa\sta [\cW_{\ga'}]\virt\in A\lsta(\cW^\sim_{\ga}).
$$
\end{prop}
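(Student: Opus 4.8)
The plan is to imitate, as closely as possible, the corresponding argument for MSP fields in \cite[\S5]{IG}, adapting the string-trimming comparison to the $N$-copy setting. First I would establish properness of $\ti\kappa$: the trimmed graph $\ga'$ is obtained by removing the string edge $e$ and its level-$0$ unstable vertex $v_-$, and attaching a $(1,\varphi)$-decorated distinguished leg; so a point of $\cW_\ga^\sim$ is a $\ga'$-framed NMSP field together with the data of reconstructing the rational tail $\sC_e$ (a $\PP^1$) and the level-$0$ vertex $\sC_{v_-}$ with its legs. Since $\sC_{v_-}$ carries $\varphi$ nowhere zero, $\mu=0$, $\nu=1$, and the rational bridge $\sC_e$ has fixed bidegree $(d_{0e},d_{\infty e})$, the fiber of $\ti\kappa$ parameterizes only a proper moduli of such tails (a bounded choice of twisted curve structure, line bundle, and fields, all with finite or proper automorphisms after taking the reduced structure); hence $\ti\kappa$ is proper, and $\cW_\ga^\sim$ is proper because $\cW_{\ga'}^-$ is (by Theorem \ref{thm1} / the properness results of \S3 applied to $\ga'$).

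Next I would set up the virtual-cycle comparison. The relative obstruction theory $\phi_{\cW_\ga/\cD_G}$ decomposes along the splitting of the domain curve into the part carried by $\ga'$ and the part carried by the string, because $H^\bullet$ of the relevant sheaves on $\sC$ decomposes (via the normalization sequence at the two nodes joining $\sC_e$, $\sC_{v_-}$, and the rest) into a contribution pulled back from $\cW_{\ga'}$ and a ``string contribution'' supported on the reconstructed tail. On the string, all the fields are rigid up to the torus and automorphism scalings (the $\PP^1$-bridge in $E_{0\infty}$ has $\sL$, $\sN$ of fixed degree with $\rho=\mu_\alpha=1$ forced in the appropriate sense, and the level-$0$ vertex contributes only a Hodge-type factor independent of the rest), so the string part of the virtual class is a fixed rational multiple of a point class. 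This is exactly the phenomenon used in \cite[(5.12)]{IG}: pulling back $[\cW_{\ga'}]\virt$ along $\ti\kappa$ and multiplying by the (zero-dimensional, purely combinatorial) string factor recovers $\jmath\lsta[\cW_\ga]\virt$ up to the rational constant $c$. I would extract $c$ explicitly from the Euler class / normal bundle bookkeeping of the string as in \cite{IG}, noting that duplicating one field into $N$ copies only affects the string through the single ``hour'' $\alpha$ attached to it, and hence does not obstruct the comparison.

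The key steps, in order: (i) describe $\cW_\ga^\sim$ as a fiber product and check $\ti\kappa$ is proper by bounding the reconstruction data of the string; (ii) compare $\phi_{\cW_\ga/\cD_G}$ with the pullback under $\ti\kappa$ of $\phi_{\cW_{\ga'}/\cD_G}$, using the normalization/splitting sequences for $\sL(-\Si^\sC_{(1,\varphi)})^{\oplus5}$, $\sP(-\Si^\sC_{(1,\rho)})$, $\oplus_\alpha\sL\otimes\sN\otimes\bL_\alpha$, and $\sN$ at the nodes separating the string; (iii) identify the string's own contribution as a fixed rational multiple of a fundamental class, using that on the string $\rho$, $\mu$, $\nu$ are nowhere-vanishing in the forced pattern of Lemmas \ref{V}–\ref{E}; (iv) combine (ii) and (iii) to read off $\jmath\lsta[\cW_\ga]\virt=c\cdot\ti\kappa\sta[\cW_{\ga'}]\virt$, quoting the degeneration/splitting formalism for cosection-localized virtual cycles from \cite{CoVir, CLL, IG}.

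The main obstacle I anticipate is step (ii)–(iii): making the decomposition of the (cosection-localized) obstruction theory along the string precise in the $G$-equivariant setting, because the cosection $\sigma$ of \eqref{mixed-cosection} mixes $\varphi$ and $\rho$ and one must check it restricts compatibly to the string (where $\varphi$ may be nonzero but $\varphi_1^5+\cdots+\varphi_5^5$ and $\rho$ interact) so that the localized classes multiply correctly; this is handled in \cite{IG} for MSP fields, and the only genuinely new point is verifying that the $\oplus_\alpha \bL_\alpha$-weights along the string's hour $\alpha$ enter the Euler-class factor exactly as a unit times the $N=1$ factor, so that $c$ is the same rational number as in the MSP case (up to the explicit substitution $t_\alpha=-\zeta_N^\alpha t$). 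Once that bookkeeping is in place, the statement follows verbatim from the argument around \cite[(5.12)]{IG}.
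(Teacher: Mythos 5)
Your overall strategy coincides with the paper's, which simply invokes \cite[Proposition 5.5]{IG} and says the argument carries over; so the question is whether your sketch of that argument is itself correct, and there is a genuine problem at its core, namely step (iii). You assert that on the string all fields are rigid up to torus and automorphism scalings, so that the string contributes ``a fixed rational multiple of a point class'' and the factor relating $\jmath\lsta[\cW_\ga]\virt$ to $\ti\kappa\sta[\cW_{\ga'}]\virt$ is zero-dimensional and purely combinatorial. This is false: the five values $\varphi_1,\dots,\varphi_5$ at the level-$0$ end of the string are unconstrained by $G$-invariance and sweep out (roughly) a $\PP^4$ in the fiber of $(\cW_\ga)\lred\to\cW_{\ga'}$; moreover, on the degeneracy locus a point of $\cW_\ga^-$ cannot carry an irreducible $E_{0\infty}$ string at all (the conditions $\varphi\ne 0$ at the level-$0$ end, $\rho\ne 0$ at the level-$\infty$ end, and Lemma \ref{degenerate-locus} are incompatible on an irreducible $\sC_e$), so there the string is forced to break into an $E_{01}$--$E_{1\infty}$ chain with the $\varphi$-value constrained to the Fermat quintic. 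Thus $\ti\kappa$ has positive relative dimension; this is exactly why the paper emphasizes at the start of this subsection that for strings the expected dimension of $\cW_\ga$ can be non-negative and the vanishing does not follow from dimension counting, and why the statement involves a pullback $\ti\kappa\sta$ and an undetermined rational constant $c$ rather than an equality of virtual classes under a gerbe as in Lemmas \ref{marking}--\ref{trim-7}. If the string contribution were a point class, the asserted identity would not even be dimensionally consistent, and your proposed proof would reduce the string case to the easy trimming lemmas, contradicting the very reason this proposition is needed.

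Concretely, what is missing is the actual content of \cite[Proposition 5.5]{IG}: one must (a) show that $(\cW_\ga)\lred\to\cW_{\ga'}$ has a bundle-like (smooth/flat, essentially a $\PP^4$-bundle up to gerbe) structure so that $\ti\kappa\sta$ is defined and $\ti\kappa$ is proper, and (b) compare the obstruction theories relative to this fibration and compute the cosection-localized contribution of the fiber direction --- an excess/Euler-class computation over the positive-dimensional fibers, with the cosection in the fiber direction cutting $\PP^4$ down to the quintic --- which is where the rational number $c$ comes from. Your steps (i), (ii), (iv) are reasonable in outline and the remark that the $N$ copies enter only through the single hour $\alpha$ of the string is correct, but without replacing the rigidity claim in (iii) by this fiberwise virtual-class computation the argument does not go through.
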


\begin{proof}
The proof follows exactly as that of \cite[Proposition 5.5]{IG}.
\end{proof}

\begin{proof}[Proof of Theorem \ref{main}]
The proof is the same as that at the end of \cite[\S  5]{IG}.
\end{proof}

\section{Localization formula and separation of nodes}
\def\Mbar{\overline{\cM}} \def\alp{\alpha}
\def\lophi{_{(1,\varphi)}}
\def\lorho{_{(1,\rho)}}

We state the localization formula for the moduli of NMSP fields. As its proof is parallel to that in \cite{CLLL2}, we will
not repeat them here.
Let $(g,\gamma,\bd)$ be narrow, and let $\cW=\cW\lggd$ be the moduli of stable NMSP fields of data $(g,\gamma,\bd)$.
Let $\Theta\in G\lggd^\reg$ be regular, with
$$\cW_\Theta\lra \cW_{(\Theta)}\sub \cW^G
$$
being the stack of $\Theta$-framed NMSP fields, and $\cW_{(\ga)}\sub\cW^G$ its image stack, where the later is both open
and closed. 
For each vertex $v\in V_0$($=V_0(\Theta)$), we let $[v]=\{v\}\cup E_v$, where $E_v$ is the set of edges incident to $v$.
We let $\fa_1,\cdots,\fa_s$ be the webs of $\Theta$ at infinity.

We have an isomorphism
$$
\cW_{\ga}\cong \prod_{v\in V_0}{\cW_{[v]}}
\times \prod_{v\in V^S_1}\cW_{v} \times \prod_{\fa\in\{\fa_1,\cdots,\fa_s\}}\cW_{\fa}
\times \prod_{e\in E_{1\infty}} \cW_e
\times \prod_{e\in E_{11}} \cW_e.
$$
Here, the notations for $\cW_a$, except $\cW_\fa$, are the same as in \cite{CLLL2}; $\cW_\fa$ is as in \S \ref{5.2}.
We comment that every edge in $E_{01}$ is contained in one $[v]$.

\subsection{The fixed parts}
As before, we denote by $\cW_\Theta^-$ the degeneracy locus of the cosection used to construct the cosection
localized virtual cycle, which is $\cW_\Theta^-=\cW_\ga\cap\cW^-$, and is proper.
Let $\iota_\ga:\cW_\Theta^-\sub\cW_\ga$ be the inclusion.

We let the automorphism group of a point in $\cW_e$ be $G_e$.
For instance, when $e\in E_{01}(\ga)$, we have $G_e=\bmu_{d_e}$, and $ \cW_e\cong \sqrt[d_e]{\cO_{\PP^4}(1)/\PP^4}$.
Here $d_e=d_{0e}-d_{\infty e}=\deg(\sL|_{\sC_e})$.
Let
$$G_E=\prod_{e\in E(\Theta)-E_{\infty\infty}(\Theta)}G_e.
$$
Then the induced morphism
$$\cW_\ga\lra \prod_{v\in V_0\cup  V_1^S} \cW_v\times  \prod_{\fa\in\{\fa_1,\cdots,\fa_s\}}\cW_\fa
$$
is a $G_E$-gerbe.


\begin{proposition}\label{prop-loc}
Let $\Ga\in G\lggd^\reg$ be regular. Then
$$ [\cW_{(\Ga)}]\virt= \frac{1}{|\Aut(\Ga)|} \frac{1}{|G_E|}
(\imath_{\ga})_\ast\biggl( \prod_{v\in V_0\cup  V_1^S} [\cW_v]\virt\times  \prod_{\fa\in\{\fa_1,\cdots,\fa_s\}}[\cW_\fa]\virt\biggr).
$$
\end{proposition}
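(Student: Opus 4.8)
The plan is to transcribe the Graber--Pandharipande \cite{GP} virtual‑localization bookkeeping exactly as it was carried out for MSP fields in \cite{CLLL2}. Since $\cW_\ga\to\cW_{(\ga)}$ is an $\Aut(\ga)$-torsor and $[\cW_\ga]\virt=|\Aut(\ga)|\cdot[\cW_{(\ga)}]\virt$ (recorded in \S\ref{flat}), it suffices to establish
$$[\cW_\ga]\virt=\frac{1}{|G_E|}\,(\imath_\ga)_\ast\Bigl(\prod_{v\in V_0\cup V_1^S}[\cW_v]\virt\times\prod_{\fa}[\cW_\fa]\virt\Bigr).$$
Throughout I would work with the relative perfect obstruction theory $\phi_{\cW_\ga/\cD_G}$, which by \cite{CLLL2} computes the correct class. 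The two inputs are the isomorphism $\cW_{\ga}\cong \prod_{v\in V_0}\cW_{[v]}\times \prod_{v\in V_1^S}\cW_{v} \times \prod_{\fa}\cW_{\fa}\times \prod_{e\in E_{1\infty}} \cW_e\times \prod_{e\in E_{11}} \cW_e$ and the fact that the forgetful morphism from $\cW_\ga$ onto $\prod_{v\in V_0\cup V_1^S}\cW_v\times\prod_\fa\cW_\fa$ is a $G_E$-gerbe.

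The heart of the argument is the splitting of the obstruction theory. Over $\cW_\ga$ the universal curve normalizes as $\sC=\bigcup_{v}\sC_v\cup\bigcup_{e}\sC_e$, glued along the nodes $q_{(e,v)}$, and for each of the field bundles $\sL(-\Si^\sC_{(1,\varphi)})$, $\cP(-\Si^\sC_{(1,\rho)})$, $\sL\otimes\sN\otimes\bL_\alpha$, $\sN$, the corresponding normalization exact sequence splits $R\pi_\ast$ into the direct sum of the pieces supported on the $\sC_v$ and the $\sC_e$ plus node-correction terms supported on the $q_{(e,v)}$. Matching these summands against the relative obstruction theories of the factors $\cW_v$, $\cW_\fa$, $\cW_e$, exactly as in \cite[\S3]{CLLL2}, identifies $\phi_{\cW_\ga/\cD_G}$ with the direct sum of those of the factors, the node-smoothing parameters being absorbed into the smooth base; the Behrend--Fantechi \cite{BF} compatibility of virtual classes with direct sums of obstruction theories then produces the exterior product of the factor classes.

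It remains to see that the rigid edges and the webs contribute only numerically. For $e\in E_{01}\cup E_{1\infty}\cup E_{11}$ the curve $\sC_e$ is a rational orbifold curve of prescribed line-bundle degrees (using Remark \ref{remark1} for $E_{1\infty}$) on which all fields are rigid up to the $\CC\sta$-rescaling fixing the two $G$-fixed points; hence $h^0$ equals $h^1$ of the relevant complex, $[\cW_e]\virt=[\cW_e]$, and $\cW_e$ is a $\bmu_{d_e}$-gerbe over a point. Since $E_{0\infty}(\ga)=\emptyset$ by regularity, each web $\fa$ consists only of $V_\infty$ vertices and $E_{\infty\infty}$ edges, so $[\cW_\fa]\virt$ is the web class already constructed in \S\ref{5.2}. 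Collecting the $\bmu_{d_e}$-gerbe structures over all rigid edges (whose orders assemble to $|G_E|$) and the $\Aut(\ga)$-torsor structure then produces the factors $1/|G_E|$ and $1/|\Aut(\ga)|$, exactly as in \cite{CLLL2}, completing the proof.

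The step I expect to be the main obstacle is the precise matching of the node-correction terms in the normalization sequences with the tangent--obstruction spaces of $\cD_G$ at the nodes $q_{(e,v)}$, so that each node-smoothing direction is counted exactly once and lands on the smooth-base side (and hence does not enter the virtual cycle); this is also where the regularity of $\ga$ is genuinely used, since the monodromy restrictions on the level-$\infty$ vertices are precisely what force the edge complexes to satisfy $h^0=h^1$. Apart from this bookkeeping, the argument is a line-by-line transcription of \cite{CLLL2}.
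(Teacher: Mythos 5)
Your proposal follows exactly the route the paper itself intends: the paper gives no independent argument for Proposition \ref{prop-loc}, saying only that the proof is parallel to \cite{CLLL2}, and your outline --- reduce by the $\Aut(\Ga)$-torsor, split the relative obstruction theory over $\cD_G$ by normalizing the universal curve along the nodes, and use the $G_E$-gerbe structure of $\cW_\Ga$ over $\prod_{v}\cW_v\times\prod_\fa\cW_\fa$ to produce the factor $1/|G_E|$ --- is precisely that transcription, including your (correct) identification of the node bookkeeping as the only delicate step.

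One slip to correct: it is not true that every rigid edge moduli $\cW_e$ is a $\bmu_{d_e}$-gerbe over a point. For $e\in E_{01}$ the paper records $\cW_e\cong\sqrt[d_e]{\cO_{\PP^4}(1)/\PP^4}$, a $\bmu_{d_e}$-gerbe over $\PP^4$, because the level-$0$ end of the edge moves in $\PP^4$; correspondingly the rigidity claim ``$h^0=h^1$'' is accurate only for $E_{1\infty}$ and $E_{11}$ edges, while for $E_{01}$ edges the $G$-fixed part of $h^0$ is the tangent space of the $\PP^4$-base and is absorbed by the evaluation constraint against the level-$0$ vertex moduli. This is exactly why the paper packages the $E_{01}$ edges into the factors $\cW_{[v]}$ and why the forgetful map to $\prod_v\cW_v\times\prod_\fa\cW_\fa$ is a $G_E$-gerbe; with that adjustment your argument goes through as intended.
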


We now study each individual term $[\cW_v]\virt$ appearing in the formula.
As before, for $v\in V$ we denote by
$S_v$ to be the set of markings $\Sigma_i^\sC\in \sC_v$, and denote by $E_v$ to be the set of edges incident to $v$.
Let $Q_5\sub \Pf$ be the Fermat quintic.

\begin{lemm}The cosection localized virtual cycles of the fixed part are
\begin{enumerate}
\item
For $v\in V_1^S$, $\cW_v\cong \overline{\cM}_{g_v,|E_v\cup S_v|}$, and $[\cW_v]\virt=[\cW_v]$;
\item
for $v\in V_0^S$, $[\cW_{v}]\virt=
(-1)^{d_v+1-g}[\Mbar_{g_v, E_v\cup S_v}(Q_5,d_v)]\virt $;
\item
for $v\in V^U_0$, or $v\in V_0^S$ with $g_v=d_v=0$,
$[\cW_{v}]\virt=[-Q_5]\times[\overline M_{0,E_v\cup S_v}]$;\footnote{when $|E_v\cup S_v|\le2$, $[\overline M_{0,E_v\cup S_v}]=[pt]$.}
\item
For   a web $\fa$ of $\Theta$ at infinity, the regularity of $\Theta$ implies all legs of $\fa$ are narrow,  and $[\cW_\fa]\virt$ is  defined as
$[\cW_{(\Theta')}]\virt$ for $\Theta':=\fa$. \end{enumerate}
\end{lemm}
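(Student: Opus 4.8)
The plan is to establish the four identities one at a time, in each case restricting the $G$-equivariant tautological obstruction theory $E_{\cW/\cD}$ of Proposition~\ref{dim} — more precisely its $G$-fixed part, which is what enters the localization formula — together with the cosection \eqref{mixed-cosection} to the corresponding factor of the product decomposition of $\cW_\ga$, and then recognizing the restricted datum as one already constructed in the literature; this follows the pattern of \cite[\S 3]{CLLL2} and \cite{IG}, so the bulk of the argument is a bookkeeping of $G$-weights. For a level-$1$ stable vertex $v\in V_1^S$ (case (1)), Lemma~\ref{V}(b) gives $\varphi|_{\sC_v}=\rho|_{\sC_v}=\mu_{\beta\ne\alpha_v}|_{\sC_v}=0$ with $\mu_{\alpha_v}|_{\sC_v}$ and $\nu|_{\sC_v}$ nowhere vanishing, so $\sN|_{\sC_v}\cong\sO_{\sC_v}$ has trivial weight and $\sL|_{\sC_v}\cong\sO_{\sC_v}\otimes\bL_{\alpha_v}^{-1}$. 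Running through the summands of $E_{\cW/\cD}$, the $\varphi$-, $\rho$- and $\mu_{\beta\ne\alpha_v}$-summands all carry nonzero $G$-weight ($\bL_{\alpha_v}^{-1}$, $\bL_{\alpha_v}^{5}$, $\bL_\beta\bL_{\alpha_v}^{-1}$), hence contribute nothing to the $G$-fixed obstruction theory, while the surviving $\mu_{\alpha_v}$- and $\nu$-summands, of trivial weight, are exactly cancelled via \eqref{quot} by the $G$-fixed deformations of $\sL$ and $\sN$ in the smooth stack $\cD$. Thus $\cW_v$ is the unobstructed moduli of the pointed twisted curve $(\Sigma^{\sC_v}\subset\sC_v)$, i.e. $\overline{\cM}_{g_v,|E_v\cup S_v|}$, and $[\cW_v]\virt=[\cW_v]$.

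For a level-$0$ vertex, Lemma~\ref{V}(c) gives $\varphi|_{\sC_v}$ nowhere vanishing, $\mu|_{\sC_v}=0$ and $\nu|_{\sC_v}=1$, so $\sN|_{\sC_v}\cong\sO_{\sC_v}$, $\sL|_{\sC_v}$ has trivial weight, and $\varphi$ defines a degree-$d_v$ stable map $[\varphi]\colon\sC_v\to\PP^4$ with $[\varphi]^*\sO(1)\cong\sL|_{\sC_v}$ together with the $p$-field $\rho\in H^0(\sL^{-5}|_{\sC_v}\otimes\omega^{\log}_{\sC_v})$. Here the $\mu$-summands, of nonzero weights $\bL_\alpha$, again drop out and the $\nu$-summand cancels against $T_\cD$, so the $G$-fixed obstruction theory of $\cW_v$ is identified with the standard one of the moduli of stable maps with $p$-fields $\overline{\cM}_{g_v,E_v\cup S_v}(\PP^4,d_v)^p$ of Example~\ref{pfield}, and \eqref{mixed-cosection} restricts to the Fermat cosection $5\rho\sum \varphi_{i}^{4}\dot\varphi_{i}+\dot\rho\sum\varphi_i^5$ of \cite{CL} (whose degeneracy locus is $\{\sum\varphi_i^5=\rho=0\}$ by Lemma~\ref{degenerate-locus}, since $\varphi\ne 0$). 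The main theorem of \cite{CL} then yields, for $v\in V_0^S$, the identity $[\cW_v]\virt=(-1)^{d_v+1-g}[\overline{\cM}_{g_v,E_v\cup S_v}(Q_5,d_v)]\virt$, the sign being Chang--Li's comparison sign made compatible with the convention \eqref{nmsp0}. Case (3) is the degenerate specialization, either $v\in V_0^S$ with $g_v=d_v=0$ or $v\in V_0^U$ (so $\sC_v$ a point): the $p$-field obstruction bundle over $\PP^4\times\overline M_{0,E_v\cup S_v}$ pushes the cosection-localized class forward to $[-Q_5]\times[\overline M_{0,E_v\cup S_v}]$, with $[\overline M_{0,E_v\cup S_v}]=[\mathrm{pt}]$ when $|E_v\cup S_v|\le 2$ — this is the $d=0$ computation of \cite{CL,CLLL}.

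For a web $\fa$ of $\Theta$ at infinity (case (4)), I would first check that regularity forces every leg of $\fa$ to be narrow. By Definition~\ref{de-regu} a regular $\Theta$ has $E_{0\infty}(\Theta)=\emptyset$, so $\fa$ has no leg coming from an $E_{0\infty}$-edge (which would be broad of type $1$); a leg of $\fa$ replacing an $e\in E_{1\infty}(\Theta)$ attached to an unstable $v\in V_\infty$ is, by Definition~\ref{de-regu}(2), a non-scheme marking and hence has nontrivial $\sL$-monodromy, i.e. is narrow; for $e\in E_{1\infty}(\Theta)$ attached to a stable $v\in V_\infty$, the admissible patterns $\gamma_v=(1^{k_1}2^{k_2})$ and the exceptional ones contain no $(1,\varphi)$-flag arising from $E_{1\infty}$; and the original legs of $\Theta$ are narrow since $(g,\gamma,\bd)$ is. Hence $\fa$ is a connected flat \emph{narrow} graph with all vertices at level $\infty$, so $\cW_{(\fa)}$ and its cosection-localized virtual cycle are defined by the constructions of \S\S2--3 applied to $\Theta':=\fa$ — this is, up to a gerbe, the moduli of $5$-spin curves with $5$ $p$-fields fibered over the stable maps to $\PP^{N-1}$ of Example~\ref{spinp} — and we set $\cW_\fa:=\cW_{(\fa)}$, $[\cW_\fa]\virt:=[\cW_{(\fa)}]\virt$. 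It remains only to note the compatibility with the ambient obstruction theory, i.e. that $\phi_{\cW_\Theta/\cD_G}$ restricts on the web factor to $\phi_{\cW_{(\fa)}/\cD}$, which is the computation already recalled in \S\ref{5.2}, following \cite[\S 3]{IG} and \cite{CLLL2}.

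The hard part will be the sign bookkeeping in cases (2)--(3): one must check that Chang--Li's sign, combined with the genus and degree additivity over $V\cup E$ and the global sign of \eqref{nmsp0}, reassembles precisely as $(-1)^{d_v+1-g}$ at each level-$0$ vertex; and one must verify, uniformly over all vertices, that the $G$-fixed part of $\Ob_{\cW_\ga}$ is the asserted one — that every $\mu_\beta$-summand of nonzero weight drops out and the $\nu$-summand is cancelled by $T_\cD$. The narrowness verification in (4) is combinatorially fussy but is forced, case by case, by Definition~\ref{de-regu}.
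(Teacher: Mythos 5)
Your proposal is correct and takes essentially the same route the paper intends: the paper states this lemma without proof, deferring to the parallel fixed-locus analysis of \cite{CLLL2} (the $G$-weight bookkeeping identifying the level-$1$ factors with $\overline{\cM}_{g_v,|E_v\cup S_v|}$ and the level-$0$ factors with stable maps with $p$-fields) combined with the Chang--Li comparison of \cite{CL}, whose sign $(-1)^{5d_v+1-g_v}$ coincides with the stated $(-1)^{d_v+1-g}$, exactly as you argue. The only cosmetic point is in case (4): since $(1,\varphi)$ is narrow by the paper's convention, the narrowness of the web's legs follows directly from $E_{0\infty}(\Theta)=\emptyset$ (which removes the only broad decoration, namely $1$), so your extra case analysis of the $V_\infty$ monodromy patterns is harmless but not needed.
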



\subsection{The moving parts}
We state the virtual localization formula. Let $\iota_{\Theta}:\cW_{(\Theta)}^-\to\cW^-$ be the inclusion.

\begin{theo}[{\cite{GP, CoVir}}]
We have the virtual localization formula,
$$[\cW]\virt=\sum_{\Theta\in G\lggd^\reg} \iota_{\Theta\ast}\Bigl( \frac{[W_{(\Theta)}]\virt}{e(N_\Theta\virt)}\Bigr),
$$
where the equality holds after inverting non-zero positive degree homogeneous elements in $H\sta_G(\pt)$. 
\end{theo}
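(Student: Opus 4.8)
The plan is to obtain this as the cosection-localized Graber--Pandharipande localization formula \cite{GP,CoVir} for the $G$-stack $\cW$, in the form first proved (for $N=1$) in \cite{CLLL2}; the one genuinely new ingredient is the irregular vanishing, Theorem \ref{main}. I would proceed in two stages: first write the localization identity with the sum taken over \emph{all} flat decorated graphs, then delete the irregular ones.

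For the first stage, everything needed is already in place: the $G$-equivariant perfect obstruction theory of Proposition \ref{dim}, the $G$-equivariant cosection $\sigma$ of Lemma \ref{lem:co} with proper degeneracy locus $\cW^-$ (\S\ref{separatedness}), and the open-and-closed decomposition $\cW^\fix=\coprod_{\Theta\in G\lggd^\fl}\cW_{(\Theta)}$ of \eqref{decomp}. Restricting the obstruction theory to each $\cW_{(\Theta)}$ and splitting it into its $G$-fixed and $G$-moving parts produces, by the same argument as in \cite{CLLL2}, a perfect obstruction theory on $\cW_{(\Theta)}$ defining $[\cW_{(\Theta)}]\virt$, a virtual normal bundle $N_\Theta\virt$, and, from the $G$-fixed part of $\sigma$, a cosection compatible with the one localizing $[\cW_{(\Theta)}]\virt$. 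One then checks that the inverse Euler class $e(N_\Theta\virt)^{-1}$ factors as a product of vertex, edge and flag contributions along the decomposition $\cW_\Theta\cong\prod_{v\in V_0}\cW_{[v]}\times\prod_{v\in V_1^S}\cW_v\times\prod_{\fa}\cW_\fa\times\prod_{e\in E_{1\infty}}\cW_e\times\prod_{e\in E_{11}}\cW_e$ recorded before Proposition \ref{prop-loc}. Invoking \cite{GP,CoVir} and using $[\cW_\Theta]\virt=|\Aut(\Theta)|\,[\cW_{(\Theta)}]\virt$ then yields, after inverting the nonzero positive-degree homogeneous elements of $H^\ast_G(\pt)$,
\[
[\cW]\virt=\sum_{\Theta\in G\lggd^\fl}\iota_{\Theta\ast}\Bigl(\frac{[\cW_{(\Theta)}]\virt}{e(N_\Theta\virt)}\Bigr).
\]

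For the second stage, I would apply Theorem \ref{main}: for an irregular $\Theta$ that is not a pure loop, $[\cW_{(\Theta)}]\virt\sim 0$, so after pushing forward to a proper $G$-invariant substack containing $\cW^-$ (as in \eqref{localization0}) these terms drop out. The only non-regular graphs then remaining are the irregular pure loops, which I would rule out by a separate direct argument --- for instance an expected-dimension computation on $\cW_\Theta^-$, using that Lemma \ref{degenerate-locus} forces $\varphi$ to vanish on the level-$\infty$ components so that, as in the Lemma following Proposition \ref{reduction1}, $\expdim\cW_\Theta<0$ --- or else by observing that they do not arise for the numerical data to which the formula is applied. After discarding all of them, the index set is exactly $G\lggd^\reg$, giving the stated formula.

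I expect the main obstacle to be the first stage: verifying that the hypotheses of \cite{CoVir} are met and that the $G$-fixed part of $\sigma$ really does restrict to the cosection defining each $[\cW_{(\Theta)}]\virt$, i.e. that the cosection-localized virtual cycle is genuinely compatible with the torus fixed-point decomposition. This is, however, identical to what is carried out in \cite{CLLL2}, so I would cite it rather than reprove it; the residual edge/vertex Euler-class bookkeeping is then routine.
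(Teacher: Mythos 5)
Your proposal takes essentially the same route as the paper: the paper offers no independent argument for this statement beyond citing \cite{GP, CoVir} and declaring the proof parallel to \cite{CLLL2}, which is exactly your stage one (the $G$-equivariant perfect obstruction theory of Proposition \ref{dim}, the equivariant cosection with proper degeneracy locus, the fixed/moving splitting over the decomposition \eqref{decomp}, and the compatibility of cosection localization with torus localization imported from \cite{CoVir} and \cite{CLLL2}). The only place you go beyond the paper is stage two, and there one step is shaky: your proposed disposal of irregular pure loops by an expected-dimension count ``as in the Lemma following Proposition \ref{reduction1}'' does not apply as stated, since that lemma presupposes no vertices in $V_1$ and no $\zeta_5$-legs and is reached only after the trimming of Lemma \ref{trim-1} and its sequels, and trimming is precisely what degenerates on pure loops --- this is why Theorem \ref{main} excludes them. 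Your fallback (``they do not arise for the numerical data'') is also not automatic: pure loops occur exactly for $g=1$, $\ell=0$, which is within the scope of the theorem, and an irregular loop of $E_{1\infty}$-edges has $\varphi\equiv 0$ on its curve, so the whole fixed component lies in $\cW^-$ and cannot be discarded for free. The paper sidesteps this by stating the cycle-level localization over all flat graphs in \eqref{vir-loc} and dropping irregular non-loop graphs only in the numerical identity \eqref{localization0}; the cleanest fix for your write-up is to do the same, i.e.\ prove the formula with index set $G\lggd^\fl$ and invoke Theorem \ref{main} (in the $\sim$ sense of Definition \ref{sim}) only for correlators, rather than attempting a vanishing for irregular pure loops that neither this paper nor its predecessor \cite{IG} claims.
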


Recall that $(e,v)\in F$ means that the edge is incident to the vertex $v$; $(e,v)\in F^S$ means that in addition $v$ is
stable.

\begin{prop}\label{final}
The virtual Euler class $e(N_\Theta\virt)$ is given by
$$\frac{1}{e_G(N^\vir_{\vGa})} =\biggl( \prod_{(e, v)\in F^S} A_{(e,v)}\cdot \prod_{v\in V-V_\infty}A_v
\cdot \prod_{e\in E_{01}\cup E_{1\infty}\cup E_{11}} A_e\biggr) \cdot  A_{\infty}.
$$
\end{prop}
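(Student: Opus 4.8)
The plan is to reduce the computation of $e_G(N^\vir_\Theta)$ to the normal bundle of the $\Theta$-framed locus inside $\cW$, and then use the product decomposition of $\cW_\Theta$ recorded just before Proposition \ref{prop-loc}. First I would recall that $N^\vir_\Theta$ is, by definition, the moving part (with respect to the $G$-action) of the restriction to $\cW_{(\Theta)}$ of the virtual tangent complex $T^\vir_\cW = [E_{\cW/\cD}\to\dots]$; concretely, writing the perfect obstruction theory of Proposition \ref{dim} relative to $\cD$ and adding the (smooth, hence honest) tangent complex of $\cD$, one gets $T^\vir_\cW$ as the cone of $q^\sta T_\cD\to E_{\cW/\cD}^\vee[1]$, and $N^\vir_\Theta = (T^\vir_\cW|_{\cW_{(\Theta)}})^{\mathrm{mv}}$. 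So the task is bookkeeping: organize the moving part of this complex according to the combinatorial pieces of $\Theta$.

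The key steps, in order: (1) Pull back the universal curve $\sC$ over $\cW_{(\Theta)}$ and use the normalization sequence $0\to\sO_\sC\to \bigoplus_v \sO_{\sC_v}\oplus\bigoplus_e\sO_{\sC_e}\to\bigoplus_{(e,v)\in F}\sO_{q_{(e,v)}}\to 0$, together with the analogous sequences for $\sL$, $\sN$, $\sP=\sL^{-5}\otimes\omega^{\log}$, and $\omega^{\log}_\sC$ itself; applying $R\pi_\sta$ turns the global cohomology groups appearing in $E_{\cW/\cD}$ and in $T_\cD$ into a sum of vertex contributions $H^\bullet(\sC_v,-)$, edge contributions $H^\bullet(\sC_e,-)$, and node (flag) contributions at the $q_{(e,v)}$. (2) Take the $G$-moving part of each piece. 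On a stable vertex $v\in V_0\cup V_1^S$ the curve $\sC_v$ carries the trivial or a "constant" $G$-action, so the moving part comes only from the smoothing-of-node directions $T_{q_{(e,v)}}\sC_v\otimes T_{q_{(e,v)}}\sC_e$ and from deformations of the fields in moving characters; these assemble into the factors $A_{(e,v)}$ over $F^S$ and the factor $A_v$ over $v\in V-V_\infty$ (the vertices at level $\infty$ contribute $0$ here precisely because of regularity and the identity $\sL\otimes\sN|_{\sC_v}\cong\sO_{\sC_v}$ from Remark \ref{remark1}, so everything at level $\infty$ except node-smoothings gets absorbed into the single $A_\infty$ term coming from the webs $\fa$). (3) On each edge $\sC_e\cong\PP^1$ with its nontrivial $G$-action one computes $e_G$ of $H^0-H^1$ of $\sL$, $\sN$, $\sP$, $\omega^{\log}$ on $\sC_e$ by the standard localization-on-$\PP^1$ (Euler sequence / Mumford relation) calculation; this gives the explicit edge factors $A_e$ for $e\in E_{01}\cup E_{1\infty}\cup E_{11}$, while edges in $E_{\infty\infty}$ are folded into $A_\infty$ by the definition of $\cW_\fa$. (4) Multiply: since the normalization sequences are exact and $e_G$ is multiplicative in distinguished triangles, the product over vertices, edges and flags of the individual $e_G$'s (inverted) is exactly $1/e_G(N^\vir_\Theta)$, which is the asserted formula. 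Each of the four displayed factor-types should then be named/defined exactly as the corresponding local contribution, matching \cite{CLLL2} term by term.

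I expect the main obstacle to be Step (2)–(3) at the interface of level $1$, level $\infty$ and the webs: one must check that the obstruction-theory contribution of $\mu=(\mu_\alpha)$ — which lives in $\bigoplus_\alpha H^\bullet(\sL\otimes\sN\otimes\bL_\alpha)$ and is genuinely $G$-moving — does not produce extra moving factors beyond those packaged into $A_{(e,v)}$, $A_e$ and $A_\infty$. This is where regularity of $\Theta$, the vanishing $E_{0\infty}(\Theta)=\emptyset$, and the balanced-node analysis of Lemma \ref{unstable-q} and Lemma \ref{E} (forcing a single "hour" $\alpha$ per relevant edge/vertex) are used: they guarantee that on each $\sC_v$ at level $\infty$ the bundle $\sL\otimes\sN$ is $G$-equivariantly trivial and on each edge only the $\alpha=\alpha_e$ summand of $\mu$ contributes nontrivially, so the $N$-fold duplication collapses to a clean single-hour contribution. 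Once that is in place, the remaining computations are the same $\PP^1$-localization identities as in the $N=1$ MSP case, and the proof reduces to citing the parallel computation in \cite{CLLL2}, which is exactly the style already adopted in this section.
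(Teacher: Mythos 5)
Your plan is essentially the paper's own argument: the paper identifies $N^\vir_\Theta$ with the $G$-moving part of the virtual tangent complex, splits it via the normalization/long-exact-sequence decomposition into vertex, edge and flag contributions (this is exactly the K-theoretic identity used to isolate $A_\infty$ as $\prod_i e_G(N^\vir_{\fa_i})^{-1}$), and cites the parallel $\PP^1$-by-$\PP^1$ computations of \cite{CLLL2} for the explicit factors, with regularity and the single-hour structure of Lemmas \ref{V}, \ref{E} handling the $\mu$-fields just as you indicate. The only nitpick is that the equivariant triviality of $\sL\otimes\sN$ on level-$\infty$ vertices comes from Lemma \ref{V}(a) (nowhere-vanishing $\mu_\alpha$) rather than Remark \ref{remark1}, which concerns edges in $E_{1\infty}$.
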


We now explain the individual term appearing in the formula. We first introduce
\[V^{a, b}=\{v\in V-V^S\,|\, |S_v|=a, \, |E_v|=b\}, \quad V_\bullet^{a, b}=V_\bullet\cap V^{a, b}.
\]
We set  $ \Pi(\alpha):=\prod\limits_{ {\beta=1,\beta\neq \alpha}}^N({t}_\beta-{t}_\alpha)$.
For $e\in E_{1\infty}$,  set $r_e =1$ when $ d_e \in \ZZ$, and $r_e=5$ otherwise.

\medskip
\noindent
{\sl Formula for $w_{(e,v)}$} (for the definition, see \cite[\S4.1]{CLLL2}).

Let $v$ and $v'$ be the two vertices of an   edge $e\in E_{01}\cup E_{1\infty}$.
Let $v'\in V_1^\alpha$. Let $\delta=-1$ when $v\in V_\infty^{0,1} $, and $\delta=0$ otherwise.
\begin{itemize}
\item
in case $v\in V_0$,  $w_{(e,v)}=\dfrac{h_e +{t}_\alpha}{d_e}$ and $ w_{(e,v')}=-\dfrac{h_e+{t}_\alpha}{d_e}$;
\item
in case $v\in V_\infty^\alpha-V^{0,1}$,
$w_{(e,v)}=\dfrac{{t}_\alpha}{r_ed_e}$ and $w_{(e,v')}=\dfrac{-{t}_\alpha}{ d_e}$;
\item
in case $v\in V_\infty^\alpha\cap V^{0,1}$,
$w_{(e,v)}=\dfrac{5{t}_\alpha}{5d_e+1}$ and $w_{(e,v')}=\dfrac{- 5 {t}_\alpha}{ 5d_e+1 }$;

\item let $e\in E_{11}^{\alpha\beta}$, $v\in V_1^\beta$,
$w_{(e,v)}=\dfrac{{t}_\alpha-{t}_\beta}{d_e}$
\end{itemize}

\medskip
\noindent
{\sl Vertices in $V^{0,2}$}.

Let $v$ be a vertex in $V^{0,2}$; set $E_v=\{e,e'\}$. Then
\begin{itemize}
\item
when $v\in V_0^{0,2}$, $A'_v=\prod\limits_{\alp=1}^{N} (h_e +{t}_\alp) = \prod\limits_{\alp=1}^{N} ( h_{e'}+{t}_\alp )$;
\item
when $ v\in V_1^{0,2}\cap V_1^\alp$, $A'_v=-5{t}_\alp^6 \cdot \Pi(\alpha)$;
\item
when $v\in V_\infty^{0,2}, d_e\notin \ZZ$, $A'_v=\Pi(\alpha)$.  
\end{itemize}
	
\medskip
\noindent
{\sl Flags}.

Let $(e,v)\in F^S$. We have
\begin{itemize}
\item
when $v\in V_0$, $A'_{(e,v)}=\prod\limits_{\alp=1}^{N}(h_e+{t}_\alp)$;
\item
when $v\in V_1^\alp$, $A'_{(e,v)}=-5{t}_\alp^6\cdot \Pi(\alpha)$;
\item
when $v\in V_\infty^\alp$ and $q(e,v)=\sC_e\cap\sC_v\ \text{is stacky}$, $A'_{(e,v)}= \Pi(\alpha)$.
\end{itemize}

\medskip
\noindent
{\sl Edges}.

Let $e$ be an edge with $v$ and $v'$ incident to it. Let $v'\in V_1^\alpha$;
$d_e=\deg \sL|_{C_e}$, $d_{\infty,e}=\deg \sN|_{C_e}$.
\begin{itemize}
\item
when $e\in E_{01}$, 
$
A_e = \frac{\prod\limits_{j=1}^{5d_e-1}(-5 h_e +\frac{j(h_e+{t}_\alp)}{d_e})}{
			\prod\limits_{j=1}^{d_e}(h_e-\frac{j(h_e+{t}_\alp)}{d_e})^5 \prod\limits_{j=1}^{d_e}\frac{j(h_e+{t}_\alp)}{d_e}  \prod\limits_{\substack{ \beta=1\\\beta\neq \alpha}}^N \prod\limits_{j=0}^{d_e}
			\left(\frac{j(h_e+{t}_\alpha)}{d_e} +{t}_\beta-{t}_\alpha\right)} $;
\item
when $e\in E_{1\infty}$,  
$A_e=\frac{\prod\limits_{j=1}^{\lceil-d_e\rceil-1}(-{t}_\alp-\frac{j{t}_\alp}{d_e-\delta/5} )^5 }{
			\prod\limits_{j=1}^{{ -5d_e+\delta}}(\frac{-j{t}_\alp}{d_e-\delta/5})
			\prod\limits_{j=1}^{\lfloor d_{\infty,e} \rfloor}(\frac{j{t}_\alpha}{-d_{\infty,e}-\delta/5 })}\cdot \frac{1}{\Pi(\alpha)}$;
\item
when $e\in E_{11}^{\alpha\beta}$, incident to vertices $v\in V_1^\alpha$ and $v'\in V_1^\beta$ ($\alpha\ne\beta$), \
assuming $\Sigma^{\sC}_{(1,\varphi)}=\emptyset$,

$A_e = (-1)^{d_e} \frac{(d_e)^{2d_e} }{(d_e!)^2 (t_\beta-t_\alp)^{2d_e} } \cdot
\frac{\prod\limits_{i=1+\delta^\prime+\delta_\rho^\prime}^{5d_e-1 -\delta-\delta_\rho}
\big(5t_\alpha-\frac{i(t_\alpha-t_\beta)}{d_e}\big) }{\prod\limits_{a+b=d_e} \bl ( -\frac{a}{d_e}t_\alp - \frac{b}{d_e}t_\beta )^{5}
\prod\limits_{\gamma\neq \alp,\beta} ( t_{\gamma}-\frac{a}{d_e}t_\alp - \frac{b}{d_e}t_\beta ) \br}$.
\end{itemize}
Here, $\delta'=-1$ when $v'\in V_1^{0,1}$, and $\delta'=0$ otherwise; $\delta_\rho'=-1$ when $v'\in V_1^{1,1}$ and
$q_{(e,v')}\in \Sigma_{(1,\rho)}^\sC$.

\medskip
\noindent
{\sl Stable vertices}.

Let $v\in V^S$, let $\pi_v: \cC_v\to \cW_v$ (defined in \cite{CLLL2}) be the universal curve;
let $\cL_v$ be the universal line bundle over $\cC_v$, and let $\mathbb E_v\colon=\pi_{v*}\omega_{\pi_v}$ be the
Hodge bundle, where $\omega_{\pi_v}$ is the relative dualizing sheaf over $\sC_v$.
\begin{itemize}
\item when $v\in V^S_0$,
$A'_v=\frac{1}{\prod\limits_{\beta=1}^N e_G(R\pi_{v*} \phi_v^* \cO_{\PP^4}(1)\otimes \bL_{\beta} )}$;
\item when $v\in V_1^S\cap V_1^\alpha$, $A'_v=
\Big(\frac{e_G(\EE_v^\vee \otimes \bL_{-\alpha})}{-{t}_\alpha}\Big)^5
	\cdot \frac{(\frac{-{t}_\alpha^4}{5})^{|S_v^{(1,\varphi)}|}\cdot\prod\limits_{\beta=1, \beta\neq \alpha}^Ne_G(\EE_v^\vee\otimes \bL_{{\beta}-{\alpha}  })}{e_G(\EE_v\otimes \bL_{5\alpha})\cdot({5{t}_\alpha})^{|E_v|-1}\cdot\Pi(\alpha)}$;
\item when $v\in V_\infty^S\cap V^\alpha_\infty$, $A'_v=
\frac{1}{e_G(R\pi_{v*}\cL_v^\vee \otimes \bL_{-\alpha})} \cdot
\frac{\prod\limits_{\beta=1, \beta\neq \alpha}^Ne_G(\EE_v^\vee\otimes \bL_{{\beta}-{\alpha}  })}{\Pi(\alpha)
}$.
\end{itemize}

\medskip
\noindent
{\sl Unstable vertices}.

\begin{itemize}
\item when $v\in V_1^{0,1}\cap V^\alp_1$, $A'_v=5{t}_\alp$;
\item when $v\in V_1^{1,1}\cap V^\alp_1$ and $S_v\subset \Si^{\sC}\lophi$, $A'_v=-t^5_\alp$;
\item when $v\in V_1^{1,1}\cap V_1^\alp$ and $S_v\subset \Si^{\sC}\lorho$, $A'_v=5{t}_\alp$.
\end{itemize}

\medskip
\noindent
{\sl The terms $A_a$'s}.

\begin{itemize}
\item when $(e,v)\in F^S$, $A_{(e,v)}=\frac{A'_{(e,v)} } {w_{(e,v)}-\psi_{(e,v)}}$;
\item when $v\in V^{0,2}$ and $E_v=\{e,e'\}$, $A_v=\frac{A'_v}{w_{(e,v)}+w_{ (e',v)\black}}$;
\item when $v\in V^{0,1}$ and $E_v=\{e\}$, $A_v=A'_v\cdot w_{(e,v)}$;
\item when $v\in V^S\cup V^{1,1}$, $A_v=A'_v$;
\item when $v\in V_0^{0,1}\cup V_0^{1,1}\cup V_\infty^{0,1}\cup V_\infty^{1,1}$,  $A'_v=1$.
\end{itemize}

\subsubsection{The term $A_{\infty}$}
We now compute the term $A_{\infty}$.
%
%
Firstly, the virtual normal bundle $N_\Theta\virt$  comes from two parts:
one is from deforming the curve $(\Sigma^\sC\sub\sC)$, and the other is from
deforming $(\sL,\sN,\varphi,\rho,\mu,\nu)$.

For the part in deforming $\Sigma^\sC\sub\sC$, they have been accounted for except when deforming the part
of $\Sigma^\sC\sub\sC$ contained   entirely in $\sC_\infty$, namely the part given by the webs $\fa_1,\cdots,\fa_s$.

For the second part, we let $\cV=\sL(-\Sigma_{(1,\varphi)})^{\oplus 5}\oplus
\cdots$ be the sheaves where the sections $(\varphi,\rho,\mu,\nu)$ lie. Then using the standard long exact sequence
of cohomologies, we obtain an identity
in K-theory:
\begin{align}\nonumber
H^0(\cV-\sO_\sC^{\oplus 2})-H^1(\cV-\sO_\sC^{\oplus 2})=\bigoplus _{a\in V^S\cup E} &\left(H^0(\cV|_{\sC_a}-\sO_{\sC_a}^{\oplus 2})-H^1(\cV|_{\sC_a}-\sO_{\sC_a}^{\oplus 2})\right)-\\
&- \bigoplus_{a\in F^S\cup V^{0,2}} H^i(\cV|_{q_a}-\sO_{q_a}^{\oplus 2}).\label{KK}
\end{align}\black
For the first group of terms on the RHS of \eqref{KK}, all but $a\in V^S_\infty\cup E_{\infty\infty}$ are accounted for
in the identity in Proposition \ref{final}; for the second group of terms on the RHS of \eqref{KK} all but
$a\in  \cup_{i=1}^s\bl F^S_{\fa_i}\cup V^{0,2}_{\fa_i}\br$ are accounted for.
Thus $A_\infty$ is the combined contributions from the unaccounted for terms.
In conclusion,
$$A_\infty=\frac{1}{\prod_{i=1}^s e_G(N_{\fa_i}\virt)},
$$
where each $N_{\fa_i}\virt$
is the virtual normal bundle of $\cW_{\fa_i}$ in the respective moduli of NMSP fields.

\subsection{Separation of nodes}
For our application, we will work with the case where $\gamma$ consists of $n$ many $(1,\varphi)$-type markings,
and $\bd=(d,0)$. Thus the set of its associated regular graphs are $G_{g,n,d}\ureg$, etc..

Let $\Theta$ be a regular graph in $G_{g,n,d}\ureg$; let $\Cont_\Theta$ be its associated cycle in the localization formula of $[\cW]\virt$
in { \eqref {localization0}}.
In the subsequent paper, we need to decompose the cycle $\Cont_\Theta$ along the nodes in $V_1$. In this  subsection,
we show how such decomposition is derived.

Let $e\in E_{1\infty}$ be an edge of $\Theta$, incident to a vertex  $v\in V_1$.
Let $\xi=(\sC,\Sigma^\sC,\cdots)\in \cW_\ga$.   We assume $q_{(e,v)}=\sC_e\cap \sC_v$  is a node of $\sC$;
when $v$ is unstable we call $e'$ the other edge incident to $v$ if $|E_v|=2$ and $E_v=\{e,e'\}$.


\begin{definition}
We say that the node $q_{(e,v)}$ is a level separating node if either $v$ is stable, or when $v$ is unstable and its
other edge $e'$ (incident to $v$) lies in $E_{11}\cup E_{01}$.
\end{definition}

We now construct a new graph $\Theta'$ that separates at a level separating node $q_{(e,v)}$.
{ The $\Theta'$ is derived from $\Theta$ by making $e$ not incident to $v$,
adding a new level $1$ unstable vertex $v'$ incident to $e$, adding a new $(1,\rho)$-leg $l'$ to $v'$,
and adding a new $(1,\rho)$-leg $l$ to $v$.
This way, $\Theta'$ has a new vertex $v'$ and two new legs $l$ and $l'$.
In case $\Theta$ is connected, then either $\Theta'$ has two connected components or $g_\Theta=g_{\Theta'}+1$.}


Let $v\in V_1^\alpha$ and  $a_e=-5d_{e}$. In case $v$ is stable, let $\psi_{(e,v)}$ be the psi class of $\sC_v$ at $q_{(e,v)}$;
in case $v$ is unstable, then let {  $\psi_{(e,v)}$   be $-w_{(e',v)}$}, where $w_{(e',v)}=e_G(T_{q_{(e,v)}}\sC_{e'})$.
We also abbreviate $\delta(\Theta)=d_\Theta+g_\Theta+1$.

Denote $1_\alpha:=1$ to be the generator of the
$H^0(\pt_\alp)$ where $\pi_\alp$ is the $\alpha$-th $G$-fixed point in $\PP^{4+N}$. Also denote $1^\alpha:=-\frac{1}{5}Nt\lalp^3 t^N\cdot 1\lalp$.

\begin{proposition} Suppose $\Theta'$ is connected, then
$$(-1)^{\delta(\Theta)}\Cont_\Theta(-)=(-1)^{\delta(\Theta')}\Cont_{\Theta'}\bl-, \frac{1^{\alpha}}{\frac{5t\lalp}{a_e }-{  \psi_{(e,v)}}}\br.
$$
Suppose $\Theta'=\Theta_1\coprod \Theta_2$ is disconnected with $v\in V(\Theta_1)$. Then
$$(-1)^{\delta(\Theta)}\Cont_\Theta(-)=(-1)^{\delta(\Theta_1)}\Cont_{\Theta_1}\bl-, \frac{1^{\alpha}}{\frac{  5t\lalp}{a_e}-{\psi_{(e,v)}}}\br\cdot(-1)^{\delta(\Theta_2)} \Cont_{\Theta_2}\bl-\br.
$$
\end{proposition}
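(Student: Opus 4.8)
The plan is to derive this node-separation formula by a careful bookkeeping of how every factor in the localization contribution $\Cont_\Theta$ (as described by Proposition~\ref{final} and the explicit list of $A$-terms) behaves when we cut the graph at the level-separating node $q_{(e,v)}$. The key point is that the moduli stack $\cW_\Theta$, away from an edge-recombination issue, is a fiber product, so the graph-splitting $\Theta\rightsquigarrow\Theta'$ corresponds to an actual morphism of moduli spaces (or a product decomposition), and the virtual classes multiply accordingly. First I would set up the comparison of the two moduli spaces: there is a natural identification $\cW_{\Theta'}\cong \cW_{\Theta}$ at the level of the underlying geometric data, because adding the unstable level-$1$ vertex $v'$ with its $(1,\rho)$-leg and adding the $(1,\rho)$-leg $l$ to $v$ does not change the curve, the line bundles, or the fields; it only changes the combinatorial decoration and the way the obstruction theory is organized along $q_{(e,v)}$. (When $v$ is unstable one must also check that making $e$ not incident to $v$ is harmless, which is exactly what ``level separating'' guarantees: the other edge $e'$ at $v$ lies in $E_{11}\cup E_{01}$, so $v$ remains a legitimate vertex of $\Theta'$.)

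Next I would track the change in the Euler class of the virtual normal bundle. Using the K-theoretic splitting \eqref{KK}, cutting at $q_{(e,v)}$ replaces the ``smoothing-the-node'' contribution $H^i(\cV|_{q_{(e,v)}}-\sO^{\oplus 2})$ that was shared between the two sides by a product of the vertex/flag terms for $v$ (now carrying an extra $(1,\rho)$-leg) and for the new vertex $v'$. Concretely, the smoothing factor $\frac{1}{w_{(e,v)}-\psi_{(e,v)}}$ attached to the flag $(e,v)$ gets reinterpreted: on the $\Theta'$ side the edge $e$ is now incident to the unstable vertex $v'$, whose contribution is a pure weight, and the ``missing'' factor is precisely the insertion $\dfrac{1^\alpha}{\frac{5t_\alpha}{a_e}-\psi_{(e,v)}}$. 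Here $a_e=-5d_e$ enters through $w_{(e,v)}=\frac{5t_\alpha}{5d_e+1}$ or $\frac{t_\alpha}{5d_e}$ depending on whether $v\in V^{0,1}$, and one checks the two cases give the stated uniform answer $\frac{5t_\alpha}{a_e}$ (with the appropriate $\delta$-shift absorbed); the class $1^\alpha=-\frac15 N t_\alpha^3 t^N\cdot 1_\alpha$ is exactly the normalization needed so that pushing forward $1_\alpha$ from the new $(1,\rho)$-leg of $v$ against the edge factor $A_e$ for $e\in E_{1\infty}$ reproduces the original flag term $A'_{(e,v)}=-5t_\alpha^6\Pi(\alpha)$ (for stable $v$) or $A'_v=5t_\alpha$ (for unstable $v$), after substituting $t_\beta\mapsto-\zeta_N^\beta t$. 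I would verify this by direct comparison of $A_e$ for $e\in E_{1\infty}$ on the two sides together with the vertex terms, which is the routine but lengthy computation I would not reproduce in full.

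Then I would account for the sign: the factor $(-1)^{\delta(\Theta)}$ versus $(-1)^{\delta(\Theta')}$ (or $(-1)^{\delta(\Theta_1)+\delta(\Theta_2)}$) comes from the $(-1)^{d+1-g}$ prefactor in the definition \eqref{nmsp0} together with the behavior of $d_\Theta$ and $g_\Theta$ under cutting: in the connected case $g_{\Theta'}=g_\Theta-1$ (cutting a non-separating node of the graph) or $\Theta$ splits, in which case $g_{\Theta_1}+g_{\Theta_2}=g_\Theta$ and $d_{\Theta_1}+d_{\Theta_2}=d_\Theta$, while $\delta(\Theta)=d_\Theta+g_\Theta+1$ is designed so that the discrepancy is exactly a sign carried by the inserted class; a short parity count using $a_e=-5d_e\equiv d_e\pmod 2$ closes this. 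Finally I would invoke the splitting of virtual classes along the node — this is the analogue of the standard gluing/degeneration formula for the $p$-field/spin theory, already used in \cite{CLLL2} — to write $[\cW_{\Theta'}]\virt$ (or $[\cW_{\Theta_1}]\virt\times[\cW_{\Theta_2}]\virt$) in terms of $[\cW_\Theta]\virt$, and assemble the pieces. The main obstacle I anticipate is precisely the matching of the normalization constant $1^\alpha$ and the weight $\frac{5t_\alpha}{a_e}$: one must reconcile the edge factor $A_e$ for $e\in E_{1\infty}$, the flag factor, the Hodge-bundle contributions $e_G(\mathbb E_v^\vee\otimes\bL_{\beta-\alpha})$ that are redistributed between $v$ and $v'$, and the psi-class conventions (the definition $\psi_{(e,v)}=-w_{(e',v)}$ in the unstable case) so that no spurious factor of $t_\alpha$, $5$, or $\Pi(\alpha)$ survives; getting all of these to cancel simultaneously, uniformly in the stable/unstable and $V^{0,1}$/non-$V^{0,1}$ cases, is the delicate part, and I would organize it by first treating $v$ stable, then $v\in V_1^{1,1}$, then $v\in V_1^{0,1}$ separately and checking the answer is the same.
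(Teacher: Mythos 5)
Your proposal takes essentially the same route as the paper: the proof there is exactly the factor-by-factor bookkeeping you outline — the flag term $A'_{(e,v)}/(w_{(e,v)}-\psi_{(e,v)})$ disappears, the lost edge at $v$ and the new unstable $(1,\rho)$-vertex $v'$ shift the powers of $5t\lalp$, and the normalization $1^\alpha=-\tfrac{1}{5}Nt\lalp^3t^N\cdot 1\lalp$ is chosen precisely to absorb $A'_{(e,v)}=-5t\lalp^6\prod_{\beta\ne\alpha}(t_\beta-t_\alpha)=5Nt\lalp^5t^N$ under the $t_\beta=-\zeta_N^\beta t$ convention — with the stable case done explicitly and the unstable case declared similar. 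The only cosmetic differences are that no gluing/degeneration formula, use of \eqref{KK}, or Hodge-bundle redistribution is needed (Proposition \ref{prop-loc} already exhibits both contributions as products over the same vertex moduli, so $A_e$ and the $\mathbb{E}_v$-factors are literally unchanged), and the residual minus sign is cancelled by $\delta(\Theta)=\delta(\Theta')+1$ (resp. $\delta(\Theta_1)+\delta(\Theta_2)-1$) rather than by a parity count on $a_e$.
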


\begin{proof}We first look at the case where $v$ is stable.
By looking at the localization formula of $[\cW]\virt$ stated in this section,
we see that the factors in $\Cont_\Theta$ that are related to both
$v$ and $e$ are
\beq\label{fac}
\bl\frac{1}{5t\lalp}\br^{|E_v|}\ \text{in} \ A_{v}',\quad A_{v'}', \and \frac{A_{(e,v)}'}{w_{(e,v)}-\psi_{(e,v)}}.
\eeq
({See arguments two paragraphs below.}) After splitting, $\Theta'$ will have a new vertex $v'$ incident to $e$,
one less edge incident to $v$, one additional $(1,\rho)$-leg incident to $v'$, and one
additional $(1,\rho)$-leg incident to $v$. This will contribute an extra $\frac{1}{5t\lalp}$.
On top of this, since $e$ is no longer incident to $v$, the term $A_{(e,v)}'$ is missing. { Using that
$$A_{(e,v)}'={-5t\lalp^6\cdot \prod\limits_{\beta\ne \alpha}(t_\beta-t_\alpha)}={5Nt\lalp^5 t^N},\and
1^\alpha=-\frac{1}{5}Nt\lalp^3 t^N\cdot 1\lalp,
$$
we obtain}
$$\frac{\text{Factors in $\Cont_\Theta$}}{\text{Factors in $\Cont_{\Theta'}$}}=\frac{1}{5t\lalp}\cdot
\frac{1}{5t\lalp}\cdot \frac{5Nt\lalp^5 t^N}{w_{(e,v)}-\psi_{(e,v)}}\cdot 1\lalp=\frac{\frac{1}{5}Nt\lalp^3 t^N\cdot
1_\alpha}{w_{(e,v)}-\psi_{(e,v)}}
=-\frac{1^\alpha}{\frac{a_et\lalp}{5}-\psi_{(e,v)}}.
$$
Note that we have a minus sign in front of the relevant term.

We now assume $\Theta$ is connected.
When $\Theta'$ is connected, then $\delta(\Theta)=\delta(\Theta')+1$;
when $\Theta'=\Theta'_1\coprod \Theta_2'$ is disconnected, we have
$\delta(\Theta)=\delta(\Theta_1)+\delta(\Theta_2) -1$.
This extra $1$ cancels the minus sign mentioned before. This proves the Proposition when $v$ is stable.
The case where $v$ is unstable is similar, and will be omitted.
\end{proof}

\begin{proof}[Proof of Theorem \ref{thm2}]
Let $\bd=(d,d_\infty)$. Recall that
$$[\cW\gnD]\virt\in A_{\delta(\bd)}^G \bl \cW\gnD^-\br,
$$
where $\cW\gnD^-$ is the degeneracy locus of the cosection of the obstruction sheaf, which is proper
and $G$-invariant, and (cf. \eqref{vdim})
$$\delta(\bd)=Nd+d_\infty+N(1-g)+n.
$$
Let $\pi: \cW\gnD^-\to \pt$ be the proper projection. By definition, $\bigl<\prod_i\tau_i\bar\psi_i^{k_i}\bigr>^M_{g,n,d_\infty}$
is a power series in $q$, with the coefficient of $q^d$ being
\beq\label{coef}
\mathbf c_d:=(-1)^{d+1-g} \pi\lsta\bigl( \prod_i\tau_i\bar\psi_i^{k_i}\cdot \bigl[\cW_{g,n,(d,d_\infty)}\bigr]\virt\bigr)\in
A^G_{\delta(\bd)-\sum^n \deg\tau_i-|k_\bullet|}(\pt).
\eeq
Here $|k_\bullet|=\sum^n k_i$.
Therefore, $\mathbf c_d$ can be possibly nonzero only when
$$\delta(\bd)-\sum^n (\deg\tau_i +k_i )= N(d+1-g-\eps)\le 0,
$$
(recall $\eps$ is defined in Theorem \ref{thm2}), which  is equivalent to
$d\le (g-1)+\eps$.

This shows that
\beq\label{poly2}\bigl<\prod_i\tau_i\bar\psi_i^{k_i}\bigr>^M_{g,n,d_\infty}=
\sum_{0\le d\le (g-1)+\eps} \mathbf c_d q^d,
\eeq
is a polynomial of degree bounded by $g-1+\eps$.

Following our convention, after equivariant integration we will substitute $t\lalp$ by $-\zeta_N^\alpha t$,
thus \eqref{poly2} is indeed a polynomial in $q$ with coefficient a Laurent monomial in $t$.
As $\mathbf c_d$ has degree $\delta(\bd)-\eps-|k_i|$, $\mathbf c_d=c_d\cdot t^{-\delta(\bd)+\eps+|k_i|}$, $c_d\in \QQ(\zeta_N)$.
Substituting the expression of $\delta(\bd)$, we prove Theorem \ref{thm2}.
\end{proof}

\end{document}